\documentclass[11pt]{article}
\usepackage[dvips]{epsfig}
\usepackage{amsmath}
\usepackage{amsfonts}
\usepackage{amssymb}
\usepackage{graphicx}
\usepackage{latexsym,amssymb,amsmath,amsthm,amsfonts}
\usepackage{psfrag}
\usepackage{bbm}
\usepackage{float}
\usepackage{latexsym}
\usepackage{color}
\setcounter{MaxMatrixCols}{30}

\newtheorem{lem}{Lemma}[section]
\newtheorem{thm}{Theorem}[section]
\newtheorem{cor}{Corollary}[section]
\newtheorem{prop}{Proposition}[section]
\newtheorem{remark}{Remark}[section]

\newtheorem{defn}{Definition}[section]
\numberwithin{equation}{section}

\newcommand{\be}{\begin{equation}}
\newcommand{\ee}{\end{equation}}
\newcommand{\bi}{\bibitem}
\newcommand{\ben}{\begin{enumerate}}
\newcommand{\een}{\end{enumerate}}
\newcommand{\beq}{\begin{eqnarray}}
\newcommand{\eeq}{\end{eqnarray}}
\newcommand{\beqn}{\begin{eqnarray*}}
\newcommand{\eeqn}{\end{eqnarray*}}

\textwidth =16cm \topmargin =-1.5cm \textheight =23cm
\oddsidemargin=-10pt

\parindent=0.8cm

\title{Linearized heat semigroups on Finsler measure spaces and some applications}

\author{\small{Qiaoling Xia}\\
{\small {\it Department of Mathematics, School of Sciences }}\\ {\small{\it Hangzhou Dianzi University}}\\
 {\small {\it Hangzhou, 310018, Zhejiang Province, P.R.China}}\\
 {\small {\it E-mail address:  xiaqiaoling$@$hdu.edu.cn}}}

\date{}
\begin{document}
\maketitle{}

\begin{abstract}
 It is known that the Finsler heat flow is a nonlinear flow. This leads to the study of linearized heat semigroups for the Finsler heat flow. In this paper, we give the properties of linearized heat semigroups and prove that the semigroup is conservative on complete Finsler measure spaces $(M, F, m)$ with weighted Ricci curvature Ric$_N$ bounded from below. As applications, we give new proofs of Li-Yau's inequalities established in \cite{Xia2} and \cite{OS2} respectively in the compact case and extend them to the complete Finsler measure spaces with Ric$_N\geq K$ for $K\in \mathbb R$.  Finally we give several equivalent characterizations of Ric$_\infty\geq K (K\in \mathbb R)$ via the linearized heat semigroup approach and their applications.

{\small{\it MSC 2020: }}53C60; 58J35; 58J60.

{\small{\it Keywords: Finsler measure space; heat flow; linearized heat semigroup; weighted Ricci curvature; Li-Yau's inequality. }}
\end{abstract}

\section{Introduction}

  It is well known that Li-Yau's inequalities for linear parabolic equations are essential tools to study global geometry and analysis on Riemannian manifolds (\cite{LY}, \cite{Da}, \cite{BGL}). Every Li-Yau type inequality implies the corresponding Harnack inequality, which leads to the estimates of the heat kernel, the Green function, the first eigenvalue and various entropy formulas with monotonicity for heat equations, etc. (\cite{Li}, \cite{LX}). In recent years, Li-Yau's inequality has been generalized in different settings, such as weighted Riemannian manifolds (\cite{Li}), Alexandrov spaces and RCD$^*(K, N)$-metric measure spaces (\cite{QZZ}, \cite{ZZ}) and so on. In these cases, the heat flows are linear and the proofs of these inequalities are based on the Bochner technique and the maximum principle.

  Recall that an $n$-dimensional {\it Finsler manifold} $(M, F)$ means an $n$-dimensional smooth manifold $M$ equipped with a Finsler metric (or Finsler structure) $F: TM\rightarrow [0, +\infty)$ such that $F_x=F|_{T_xM}$ is a Minkowski norm on $T_xM$ at each point $x\in M$, where $TM$ is the tangent bundle of $M$.  Given a smooth measure $m$, the triple $(M, F, m)$ is called a {\it Finsler metric measure space} (simply, Finsler measure space).
A Finsler measure space is not a metric space in the usual sense because $F$ may be nonreversible, i.e., $F(x, v)\neq F(x, -v)$ may happen. This non-reversibility causes the asymmetry of the associated distance function. Since $F$ is not smooth at the zero section and non-quadratic, the gradient $\nabla u$ of a smooth function $u$ on $M$ is only continuous and nonlinear on $M$. From the variational viewpoint, we may still define the Finsler Laplacian $\Delta_m $ (simply, denoted by $\Delta$) with respect to the measure $m$ (\cite{Sh}). But it is a fully nonlinear divergence differential operator.

 S. Ohta and K.-T. Sturm introduced a nonlinear heat flow $\partial_t u=\Delta u$ (in the weak sense) on an $n$-dimensional Finsler measure space and applied the classical technique due to Saloff-Coste (\cite{SC}) to show the unique existence and a certain interior regularity of solutions to the heat flow (\cite{Oh1}, \cite{OS2}). For the elliptic case, similar results were established by Y.Ge and Z. Shen (\cite{GS}). Further, Ohta-Sturm used the maximum principle for the linearized parabolic operator of $\partial_t-\Delta$ to get Davies' estimate (which improved the well-known Li-Yau's inequality in \cite{LY}):
   \begin{eqnarray}|\nabla \log u|^2-\alpha\partial_t(\log u)\leq -\frac{N\alpha^2K'}{4(\alpha-1)}+\frac{N\alpha^2}{2t}  \label{Davis's est}\end{eqnarray}
  on a compact Finsler manifold with Ric$_N\geq K(K\in \mathbb R)$ for any constant $\alpha>1$ and $t>0$, by establishing the Bochner-Weitzenb\"ock formula, where $N\in [n, \infty)$ and $K'=\min\{K, 0\}$ (\cite{OS2}). Recently, the author of the present paper gave more general Li-Yau's inequalities for the heat flow via establishing a more refined estimate in the compact case, and gave a local Li-Yau type gradient estimate in the complete noncompact case by Moser's iteration (\cite{Xia2}).  Moser's iteration indeed works for the case when $\alpha$ is a constant greater than $1$ in (\ref{Davis's est}) but seems not to work for the other cases. A natural question is how to establish a more general Li-Yau type inequality on a complete and noncompact Finsler measure space.

Due to the nonlinearity of Finsler Laplacian and the non-reversibility of the Finsler distance function,  the linear elliptic or parabolic theory of PDEs cannot be directly applied to the Finsler setting. Because of this, we try to exploit some nonlinear approaches to study geometric analysis and topology on Finsler manifolds (\cite{Oh1}, \cite{Xia3}). For example, S. Ohta introduced the linearized heat semigroup $\{P_{s, t}^{\nabla u}\}$ for the heat equation $\partial_tu=\Delta u$ (in the weak sense) and used this to show some functional inequalities, such as the Poincar\'e inequality, the log--Sobolev inequality, and the Bakry--Ledoux's isoperimetric inequality etc. on Finsler measure spaces with positive weighted Ricci-infinity curvature bounds (\cite{Oh1}). The author of the present paper gave the gradient estimates for the $p$--eigenfunctions (resp. $p$--harmonic functions) via Moser's iteration (\cite{Xia1}).  In this paper, we shall further develop the linearized heat semigroup theory and pursue its applications.
To ensure the existence of $\{P_{s, t}^{\nabla u}\}$ on Finsler measure spaces $(M, F, m)$, we need to assume that $F$ satisfies  {\it uniform convexity and uniform smoothness}, i.e., there exist uniform and positive constants $\kappa^*$, $\kappa$ such that for any $x\in M$, $v\in T_xM$ and $Y\in TM\setminus\{0\}$, we have
\begin{eqnarray}\kappa^*F^2(x, v)\leq g_Y(v, v)\leq \kappa F^2(x, v), \label{unif cs cons}\end{eqnarray} where $g_Y=(g_{ij}(Y))$ is the weighted Riemannian metric on $M$ induced by $F$ with the reference vector $Y$ (see \S 2). (\ref{unif cs cons}) implies that $0<\kappa^*\leq 1\leq \kappa$ and $F$ has finite reversibility $\Lambda$ (see (\ref{Lambda}) below).
The uniform smoothness and the uniform convexity were first introduced in Banach space theory by Ball, Carlen and Lieb in \cite{BCL}. In fact there are many Finsler metrics satisfying (\ref{unif cs cons}) (see \cite{Xia1}). If $(M, F, m)$ is compact (without boundary),  then (\ref{unif cs cons}) holds and $m(M)<\infty$ trivially. The semigroup $\{P_{s, t}^{\nabla u}\}$ for the heat flow is always conservative in this case (see Corollary \ref{cor31} in \S 3). For the complete and noncompact case, we have the following result.

\begin{thm} \label{thm31} Let $(M, F, m)$ be an $n$-dimensional complete and noncompact Finsler measure space satisfying (\ref{unif cs cons}). If the volume of $M$ is finite, i.e., $m(M)<\infty$,  or  Ric$_N\geq K$ for some $N\in [n, \infty]$ and $K\in \mathbb R$ when $m(M)$ is infinite, then the linearized heat semigroup $\{P_{s, t}^{\nabla u}\}$ is conservative.
In particular, if $f\in H_0^1(M)$ and satisfies $k_1\leq f\leq k_2$ almost everywhere on $M$ for $k_1, k_2\in \mathbb R$, then $k_1\leq P_{s, t}^{\nabla u}(f)\leq k_2$ almost everywhere on $M$. Moreover, $\left(P^{\nabla u}_{s, t}(f)\right)^2\leq P_{s, t}^{\nabla u}(f^2)$. \end{thm}

As the first application of Theorem \ref{thm31}, we give new proofs of Li-Yau's inequalities established in \cite{Xia2} and \cite{OS2} respectively in the compact case and extend them to the compact or complete noncompact Finsler measure spaces with Ric$_N\geq K$ for any $K\in \mathbb R$. To state our result, let  $a(t)$ be a positive $C^1$ function on $(0, \infty)$ satisfying

(A1) $\lim\limits_{t\rightarrow 0+}a(t)=0$ and $\lim\limits_{t\rightarrow 0+}\frac{a(t)}{a'(t)}=0$;

(A2) $\frac{a'(t)^2}{a(t)}$ is continuous and integrable on the interval $[0, T]$ for all $T>0$.

For any $K\in \mathbb R$, define
\beq \alpha(t):&=& 1-\frac{2K}{a(t)}\int_0^ta(s)ds,\label{alpha} \\
\varphi(t):&=&-\frac{NK}2+\frac{NK^2}{2a(t)}\int_0^ta(s)ds+\frac N{8a(t)}\int_0^t\frac{a'(s)^2}{a(s)}ds.\label{varphi}\eeq It is easy to see that $(\alpha-1)'+(\log a)'(\alpha-1)+2K=0$ and $\varphi'+(\log a)'\varphi-\frac N8\left((\log a)'-2K\right)^2=0$
with $\lim\limits_{t\rightarrow 0+}\alpha(t)=1$ and $\lim\limits_{t\rightarrow 0+}\varphi(t)=+\infty$. Obviously, $\alpha(t)\equiv 1$ if and only if $K=0$.
Note that (\ref{alpha}) excludes the case when $\alpha(t)$ is a constant, which is not equal to $1$.

\begin{thm} \label{thm11} Let $(M, F, m)$ be an $n$-dimensional compact Finsler measure space and $u(t, x)$ be a positive global solution to $\partial_t u=\Delta u$ (in the weak sense) on $[0, \infty)\times M$. Assume that Ric$_N\geq K$ for some $N\in [n, \infty)$ and $K\in \mathbb R$. Then, for any positive $C^1$ function $a(t)$ on $(0, \infty)$ satisfying (A1)-(A2), we have
\beq F^2(\nabla(\log u))-\alpha(t)\partial_t(\log u) \leq \varphi(t)\label{ap}\eeq on $(0, \infty)\times M$, where $\alpha(t)$ and $\varphi(t)$ are defined by (\ref{alpha})-(\ref{varphi}). \end{thm}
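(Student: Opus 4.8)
\medskip
\noindent\textbf{Proof plan.} I would run the classical Li--Yau scheme --- a Bochner identity followed by a maximum-principle-type argument --- but carry it out through Ohta--Sturm's \emph{linearized} heat operator, which converts the nonlinear problem into a linear parabolic one along the trajectory. Put $f:=\log u$. Since the Legendre transform is positively $1$-homogeneous, $\nabla u=u\,\nabla f$ on the open set $M_u:=\{x:\ du\neq0\}$, hence $\Delta u=u(\Delta f+F^2(\nabla f))$ and
\be \partial_t f=\Delta f+F^2(\nabla f)\qquad\text{on }M_u. \label{pp-eq} \ee
Freeze the reference vector field $V:=\nabla u$ (time-dependent along the flow); writing $g_V$ for the induced Riemannian metric --- recall $g_{\lambda V}=g_V$ for $\lambda>0$, so $g_V=g_{\nabla f}$ on $M_u$ --- and $\Delta^{\nabla u}$ for the associated $m$-weighted Laplacian, one has $\Delta f=\Delta^{\nabla u}f$ and $F^2(\nabla f)=g_V(\nabla f,\nabla f)$ there. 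Thus $f$ obeys a linear parabolic equation in the frozen Riemannian structure $g_V$; from here on all gradients, Hessians and Laplacians are taken with respect to $g_V$, and the relevant operator is the linearized parabolic operator $\mathcal L:=\partial_t-\Delta^{\nabla u}-2\,g_V(\nabla f,\nabla\cdot)$.

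The heart of the proof is the choice of the Li--Yau quantity $w:=F^2(\nabla f)-\alpha(t)\,\partial_t f$ and the computation of $\mathcal L\bigl(a(t)(w-\varphi(t))\bigr)$. From \eqref{pp-eq} one records $\Delta f=\tfrac{1-\alpha}\alpha F^2(\nabla f)-\tfrac1\alpha w$ and $\partial_t f=\tfrac1\alpha\bigl(F^2(\nabla f)-w\bigr)$. The Ohta--Sturm Bochner--Weitzenb\"ock formula on $(M_u,g_V)$, $\Delta^{\nabla u}\bigl(\tfrac12F^2(\nabla f)\bigr)=d(\Delta f)(\nabla f)+\mathrm{Ric}_\infty(\nabla f)+\|\nabla^2f\|^2_{HS}$, combined with differentiating \eqref{pp-eq} in $t$ and absorbing the first-order terms into the drift, yields $\mathcal Lw=-2\|\nabla^2f\|^2_{HS}-2\,\mathrm{Ric}_\infty(\nabla f)-\alpha'\,\partial_t f$. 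Inserting the pointwise inequality $\|\nabla^2f\|^2_{HS}+\mathrm{Ric}_\infty(\nabla f)\ge\tfrac1N(\Delta f)^2+\mathrm{Ric}_N(\nabla f)\ge\tfrac1N(\Delta f)^2+K\,F^2(\nabla f)$, then $\partial_t f=\tfrac1\alpha(F^2(\nabla f)-w)$, and finally setting $W:=a(w-\varphi)$, a direct computation --- in which the two ODEs recorded just before the theorem,
\be (\alpha-1)'+(\log a)'(\alpha-1)+2K=0,\qquad \varphi'+(\log a)'\varphi-\tfrac N8\bigl((\log a)'-2K\bigr)^2=0, \ee
are used \emph{precisely} to annihilate, respectively, the residual $F^2(\nabla f)$-term and all the $\varphi$-terms --- collapses to
\be \mathcal L W\ \le\ -\frac{2a(t)}{N}\Bigl(\Delta f+\tfrac N4\bigl((\log a)'(t)-2K\bigr)\Bigr)^{2}\ \le\ 0 \label{pp-key} \ee
on $M_u$. (For the heat kernel on $\mathbb R^n$ with $N=n$, $a(t)=t^2$ and $K=0$ one has $W\equiv0$ and the square vanishes identically, so \eqref{pp-key} is sharp and recovers \eqref{Li-Yau's op-est}.)

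It remains to deduce $W\le0$ from \eqref{pp-key}. Hypotheses (A1)--(A2) are tailored to control the behaviour as $t\to0{+}$: they give $\alpha(0{+})=1$, $\varphi(0{+})=+\infty$ and $a(t)\varphi(t)\to0$ as $t\to0{+}$ (the last by (A2)), which together with the local regularity of $u$ forces $W\to0$. If $M$ is compact, the parabolic maximum principle applied to the subsolution $W$ with vanishing initial data gives $W\le0$ on $(0,\infty)\times M$, i.e.\ \eqref{ap}. For general forward complete $(M,F,m)$ I would instead propagate \eqref{pp-key} along the \emph{linearized heat semigroup} $(P^{\nabla u}_\tau)$ associated to $u$, whose basic properties are established in the earlier part of the paper: since formally $\partial_s\bigl(P^{\nabla u}_{t-s}(W(s,\cdot))\bigr)=P^{\nabla u}_{t-s}\bigl(\mathcal L W(s,\cdot)\bigr)\le0$ and $P^{\nabla u}_\tau$ is positivity preserving and sub-Markovian, one gets $W(t,\cdot)\le\limsup_{s\to0{+}}P^{\nabla u}_{t-s}(W(s,\cdot))\le0$; here forward completeness and \eqref{unif cs cons} enter to guarantee the conservativeness/contractivity of $P^{\nabla u}_\tau$ (an alternative is a Calabi-type cut-off on forward geodesic balls, using the Laplacian comparison theorem under $\mathrm{Ric}_N\ge K$).

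The main obstacle is neither the differential inequality \eqref{pp-key} (essentially bookkeeping once the ODEs for $\alpha,\varphi$ are in hand) nor the compact case, but the analytic subtleties at the degeneracy set $\{du=0\}$ and in the noncompact setting: the solution $u$ is only $C^{1,\beta}\cap W^{2,2}_{\mathrm{loc}}$, so $\nabla^2f$ and the Bochner identity make classical sense only on $M_u$, and $\partial_t$ need not commute with $\Delta^{\nabla u}$ because $g_V$ itself depends on $t$. One must therefore justify the computation of $\mathcal L W$ in the weak/linearized sense of Ohta--Sturm, extend \eqref{pp-key} across $\{du=0\}$ (or show that a positive maximum of $W$ cannot occur there and that $W\to0$ at $t=0$ in the required uniform sense), and make the semigroup-propagation --- equivalently the cut-off --- argument rigorous on a forward complete Finsler measure space. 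That is where the real work lies.
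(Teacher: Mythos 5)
Your proposal takes a genuinely different route from the paper's. You run the \emph{pointwise} Li--Yau scheme through the linearized operator $\mathcal L:=\partial_t-\Delta^{\nabla u}-2\,g_V(\nabla f,\nabla\cdot)$, show $\mathcal L W\le 0$ on $M_u$ for $W=a(t)(w-\varphi(t))$, and then try to close via a maximum principle or via a semigroup-propagation of the subsolution. The paper instead never forms a pointwise parabolic inequality at all: for fixed $t$ and test function $\phi$ it studies the \emph{integrated} entropy
$H(\sigma)=\int_M \hat P^{\nabla u}_{t,\sigma}(\phi)\,u_\sigma\log u_\sigma\,dm$,
computes $H'$ and $H''$ via the integrated Bochner--Weitzenb\"ock inequality (Theorem~\ref{thm22}), inserts the scalar quadratic inequality $(\Delta\log u_\sigma)^2\ge 2\mu\,\Delta\log u_\sigma-\mu^2$ with $\mu(\sigma)=\tfrac N4\bigl(2K-(\log a)'(\sigma)\bigr)$, and integrates $\frac{d}{ds}\bigl(a(\sigma)G(\sigma)\bigr)$ over $s\in[0,t]$ with $\sigma=t-s$. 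That yields \eqref{ap} directly, a.e.\ and then everywhere by continuity, without any maximum principle and without ever touching the degeneracy set $\{du=0\}$ pointwise. The two ODEs for $\alpha-1$ and $\varphi$ play the same annihilating role in both arguments, so the algebraic core is the same; what differs is the closure mechanism.

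The closure step in your proposal has a genuine gap. You write
$\partial_s\bigl(P^{\nabla u}_{t-s}(W(s,\cdot))\bigr)=P^{\nabla u}_{t-s}\bigl(\mathcal L W(s,\cdot)\bigr)$,
but the linearized heat semigroup $P^{\nabla u}_{s,t}$ defined in \eqref{v-he}--\eqref{LHE} is generated by $\Delta^{V_t}$ alone, with no drift. Differentiating the cocycle identity gives
\[
\frac{\partial}{\partial\sigma}\,P^{\nabla u}_{\sigma,t}\bigl(W(\sigma,\cdot)\bigr)=P^{\nabla u}_{\sigma,t}\bigl(\partial_\sigma W-\Delta^{V_\sigma}W\bigr)=P^{\nabla u}_{\sigma,t}\bigl(\mathcal L W+2\,g_V(\nabla f,\nabla W)\bigr),
\]
and the drift term $2\,g_V(\nabla f,\nabla W)$ has no a priori sign, so $\mathcal L W\le 0$ does \emph{not} propagate through $P^{\nabla u}$. (In a classical maximum-principle argument this term is harmless because $\nabla W=0$ at an interior maximum, but that escape is not available once you move to the semigroup.) To make your route work you would need either the Doob-transformed semigroup generated by the full drifted operator, or the Calabi/cut-off alternative you mention. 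Secondarily, the boundary behaviour you invoke --- $W\to 0$ as $t\to 0^+$ --- requires $a F^2(\nabla\log u)$, $a\alpha\,\partial_t\log u$ and $a\varphi$ all to vanish pointwise as $t\to 0^+$; but $F^2(\nabla\log u)$ and $\partial_t\log u$ need not be bounded near $t=0$, so (A1)--(A2) alone do not establish this in the pointwise sense. The paper sidesteps both difficulties by phrasing everything in terms of $\mathcal G(s)=a(\sigma)G(\sigma)$, where $G(\sigma)=\int_M\hat P^{\nabla u}_{t,\sigma}(\phi)\,u_\sigma F^2(\nabla\log u_\sigma)\,dm$ stays controlled as $\sigma\to 0^+$ because $u_0\in H^1_0(M)$, and by using the integrated Bochner formula, which is insensitive to the measure-zero set $\{du=0\}$.
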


If Ric$_N\geq -K(K\geq 0)$ and $a(t)$ satisfies $a'(t)>0$ as well as (A1)-(A2), then Theorem \ref{thm11} is reduced to Theorem 1.2 in \cite{Xia2} obtained in a different way. In fact, we can drop the restriction that $a'(t)>0$ (see (A1)). For the complete and noncompact case, we have the following.

\begin{thm} \label{thm41} Let $(M, F, m)$ be a complete and noncompact Finsler measure space satisfying (\ref{unif cs cons}) and $u(t, \cdot)$ be a positive global solution to $\partial_tu=\Delta u$ (in the weak sense) on $[0, \infty)\times M$ with the initial $u(0, \cdot)\in H^1(M)\cap L^\infty(M)$ and
$F^2(\nabla u)\in H^1(M)$ for all $t>0$.  Assume that Ric$_N\geq K$ for some $N\in [n, \infty)$ and $K\in \mathbb R$. Then we have (\ref{ap}) on $(0, \infty)\times M$ for any positive $C^1$ function $a(t)$ on $(0, \infty)$ satisfying (A1)-(A2), where $\alpha(t)$ and $\varphi(t)$ are defined by (\ref{alpha})-(\ref{varphi}). \end{thm}

The assumption that $F^2(\nabla u)\in H^1(M)$ for all $t>0$ is a technical assumption (likely redundant), which holds in the compact case thanks to the regularity of $H^2_{loc}(M)$ by Proposition \ref{prop21*} below. Since (\ref{unif cs cons}) is trivial on compact Finsler manifolds, the restrictions on $u$ and $F$ in Theorem \ref{thm41} are not necessary in the compact case (see Theorem \ref{thm11}).  For the complete and noncompact case, we have $F(\nabla u)\in L^\infty(M)$ if Ric$_N\geq -K (K\geq 0)$  by Corollary 1.5 in \cite{Xia2}. Thus $F^2(\nabla u)\in L^2(M)$ from the regularity of $u$ (see Proposition \ref{prop21*} below). In this case, the assumption that $F^2(\nabla u)\in H^1(M)$ can be replaced by $F(\nabla [F(\nabla u)])\in L^2(M)$.
Further, Theorems  \ref{thm11} and \ref{thm41} imply various new Li-Yau type inequalities on compact or complete noncompact Finsler manifolds in both positive and negative curvature by choosing suitable $a(t)$. For example, if we take $a(t)=t^2$ for $t>0$, by (\ref{alpha})-(\ref{varphi}) and Theorems \ref{thm11} and \ref{thm41}, then we have
\begin{cor}\label{cor11} Under the same assumptions as in Theorem \ref{thm11} or \ref{thm41}, we have
\beq F^2(\nabla\log u)-\left(1-\frac 23 Kt\right)\partial_t(\log u)\leq -\frac 12NK\left(1-\frac K3t\right)+\frac N{2t}. \label{ap-1} \eeq
 In particular, when Ric$_N\geq 0$, we have
 \beq F^2(\nabla\log u)-\partial_t(\log u)\leq \frac N{2t}.\label{ap-0} \eeq \end{cor}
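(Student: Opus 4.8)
The plan is to read off Corollary~\ref{cor11} as the special case $a(t)=t^2$ of Theorem~\ref{thm11}, so that the only work is to check that this choice of $a$ satisfies (A1)--(A2) and to evaluate the two integrals entering $\alpha(t)$ and $\varphi(t)$. For $a(t)=t^2$ we have $a'(t)=2t$, hence $\lim_{t\to0+}a(t)=0$ and $a(t)/a'(t)=t/2\to0$ as $t\to0+$, which is (A1); and $a'^2(t)/a(t)=4t^2/t^2\equiv4$ is constant, hence continuous and integrable on every $[0,T]$, which is (A2). Thus Theorem~\ref{thm11} applies with this $a$.

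Next I would compute the primitives $\int_0^t a(s)\,ds=\int_0^t s^2\,ds=t^3/3$ and $\int_0^t a'^2(s)/a(s)\,ds=\int_0^t 4\,ds=4t$. Substituting into (\ref{alpha}) gives
\[
\alpha(t)=1-\frac{2K}{t^2}\cdot\frac{t^3}{3}=1-\frac{2}{3}Kt,
\]
and substituting into (\ref{varphi}) gives
\[
\varphi(t)=-\frac{NK}{2}+\frac{NK^2}{2t^2}\cdot\frac{t^3}{3}+\frac{N}{8t^2}\cdot4t
=-\frac{NK}{2}+\frac{NK^2}{6}t+\frac{N}{2t}
=-\frac{1}{2}NK\Bigl(1-\frac{K}{3}t\Bigr)+\frac{N}{2t}.
\]
Inserting these two expressions into (\ref{ap}) yields precisely (\ref{ap-1}). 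For the case $\mathrm{Ric}_N\ge0$ one takes $K=0$: then $\alpha(t)\equiv1$ and $\varphi(t)=N/(2t)$, so (\ref{ap}) reduces to (\ref{ap-0}); equivalently (\ref{ap-0}) follows from (\ref{ap-1}) by letting $K\to0$, since $u>0$ makes $\partial_t(\log u)$ finite. As a consistency check one can verify directly, using $(\log a)'=2/t$, that this $\alpha$ and $\varphi$ satisfy the two identities recorded after (\ref{varphi}).

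The only genuine difficulty is hidden in Theorem~\ref{thm11}, which this corollary merely specializes, so I would sketch that as well. The natural route is to set $\psi:=F^2(\nabla\log u)-\alpha(t)\partial_t(\log u)$, differentiate along the Finsler heat flow, and use the Bochner--Weitzenb\"ock formula of Ohta--Sturm together with $\mathrm{Ric}_N\ge K$ to control the action on $\psi$ of the linearized parabolic operator of $\partial_t-\Delta$; since that linearized operator is genuinely linear and elliptic (with coefficients frozen at $\nabla u$), a maximum-principle argument becomes legitimate despite the nonlinearity of $\Delta$ and the nonreversibility of $F$. The coefficients $\alpha(t)$ and $\varphi(t)$ are then forced by demanding that $a(t)\psi$ obey a clean differential inequality with integrating factor $a(t)$, which is exactly why they solve $(\alpha-1)'+(\log a)'(\alpha-1)+2K=0$ and $\varphi'+(\log a)'\varphi-\frac{N}{8}((\log a)'-2K)^2=0$ with $\alpha(0+)=1$, $\varphi(0+)=+\infty$; comparing $\psi$ with $\varphi$ then gives $\psi\le\varphi$. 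That linearization-plus-Bochner step, and the attendant regularity needed to run the maximum principle, is the main obstacle for the theorem; for the corollary itself there is nothing beyond the elementary integration above.
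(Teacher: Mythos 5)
Your computation of $\alpha(t)$ and $\varphi(t)$ from $a(t)=t^2$, and the verification that this $a$ satisfies (A1)--(A2), is exactly the paper's proof of the corollary, which is stated immediately before it as a one-line specialization of Theorem~\ref{thm11}; the limit $K\to 0$ for (\ref{ap-0}) is also correct. One small caveat about your parenthetical sketch of Theorem~\ref{thm11}: the paper does \emph{not} prove it by a maximum principle for the linearized parabolic operator (that is the Ohta--Sturm route, which requires compactness); instead it works on complete manifolds via the linearized heat semigroup $P^{\nabla u}_{\sigma,t}$, differentiating the entropy-type functional $H(\sigma)=\int_M\phi\,P^{\nabla u}_{\sigma,t}(u_\sigma\log u_\sigma)\,dm$ twice, applying the integrated Bochner--Weitzenb\"ock inequality, and then integrating the resulting ODE inequality for $a(\sigma)G(\sigma)$ — but since the corollary only uses Theorem~\ref{thm11} as a black box, this does not affect the correctness of your argument.
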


 Obviously, (\ref{ap-0}) is exactly a generalization of sharp Li-Yau's inequality on complete Riemannian manifolds with nonnegative Ricci curvature.  If we take $a(t)=4\tau_1\sin^2(\tau_1t)$ for $\tau_1>0$ and $t\in (0, \pi/\tau_1)$ and $a(t)=4\tau_2\sinh^2(\tau_2t)$ for $\tau_2, t>0$, then we obtain the following corollary which follows from (\ref{alpha})-(\ref{varphi}) and Theorems \ref{thm11}--\ref{thm41}.
\begin{cor} \label{cor12} Let $(M, F, m)$ and $u(t, \cdot)$ be the same as in Theorem \ref{thm11} or \ref{thm41}. Assume that Ric$_N\geq K$, where $K\neq 0$. Then

(1) for any $\tau_1>0$ and $t\in (0, \pi/\tau_1)$, we have
\beq F^2(\nabla\log u)- \alpha_1(t) \partial_t(\log u) \leq\varphi_1(t), \label{Linearized-Li-Yau's est-1} \eeq  where
\beqn \alpha_1(t):&=& 1-\frac {K}{2\tau_1 \sin^2(\tau_1 t)}\left(2\tau_1 t-\sin(2\tau_1 t)\right), \\
\varphi_1(t):&=&\frac N2\left\{-K+\frac {\tau_1}{4\sin^2(\tau_1 t)}\left(\sin(2\tau_1 t)+2\tau_1 t\right) +\frac {K^2}{4\tau_1\sin^2(\tau_1 t)}\left(2\tau_1 t-\sin(2\tau_1 t)\right)\right\}.\eeqn

(2) for any $\tau_2>0$ and $t>0$, we have
\beq  F^2(\nabla\log u)-\alpha_2(t)\partial_t(\log u)\leq \varphi_2(t), \label{Linearized-Li-Yau's est-2}\eeq  where
\beqn \alpha_2(t):&=& 1-\frac {K}{2\tau_2 \sinh^2(\tau_2 t)}\left(\sinh(2\tau_2 t)-2\tau_2 t\right), \\
\varphi_2(t):&=&\frac N2\left\{-K+\frac {\tau_2}{4\sinh^2(\tau_2 t)}\left(\sinh(2\tau_2 t)+2\tau_2 t\right)+\frac {K^2}{4\tau_2\sinh^2(\tau_2 t)}\left(\sinh(2\tau_2 t)-2\tau_2 t\right)\right\}.\eeqn \end{cor}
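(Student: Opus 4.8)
The plan is to deduce both inequalities directly from Theorem \ref{thm11} by inserting the indicated weight functions into (\ref{alpha})--(\ref{varphi}) and evaluating the two integrals that appear there. For part (1) I take $a(t)=4\tau_1\sin^2(\tau_1 t)=2\tau_1\bigl(1-\cos(2\tau_1 t)\bigr)$, and for part (2) I take $a(t)=4\tau_2\sinh^2(\tau_2 t)=2\tau_2\bigl(\cosh(2\tau_2 t)-1\bigr)$. The first step is to verify (A1)--(A2). One computes $a'(t)=4\tau_1^2\sin(2\tau_1 t)$ (resp.\ $4\tau_2^2\sinh(2\tau_2 t)$), so that $a(t)/a'(t)=\tan(\tau_1 t)/(2\tau_1)$ (resp.\ $\tanh(\tau_2 t)/(2\tau_2)$); together with $\lim_{t\to0+}a(t)=0$ this gives (A1). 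Using $\sin(2x)=2\sin x\cos x$ (resp.\ $\sinh(2x)=2\sinh x\cosh x$) one finds
\[
\frac{a'^2(t)}{a(t)}=16\tau_1^3\cos^2(\tau_1 t)=8\tau_1^3\bigl(1+\cos(2\tau_1 t)\bigr)\qquad\Bigl(\text{resp. } 8\tau_2^3\bigl(1+\cosh(2\tau_2 t)\bigr)\Bigr),
\]
which is continuous, hence integrable on every $[0,T]$ on which $a$ stays positive; this is (A2).

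The second step is the elementary bookkeeping. In case (1),
\[
\int_0^t a(s)\,ds=2\tau_1 t-\sin(2\tau_1 t),\qquad \int_0^t\frac{a'^2(s)}{a(s)}\,ds=4\tau_1^2\bigl(2\tau_1 t+\sin(2\tau_1 t)\bigr).
\]
Substituting these together with $a(t)=4\tau_1\sin^2(\tau_1 t)$ into (\ref{alpha}) gives precisely $\alpha_1(t)$, and substituting into (\ref{varphi}) and pulling out the factor $N/2$ gives precisely $\varphi_1(t)$: the three terms $-NK/2$, $\tfrac{NK^2}{2a}\int_0^t a$, and $\tfrac{N}{8a}\int_0^t a'^2/a$ become, after dividing $a$, the three summands $-K$, $\tfrac{K^2}{4\tau_1\sin^2(\tau_1 t)}(2\tau_1 t-\sin(2\tau_1 t))$, and $\tfrac{\tau_1}{4\sin^2(\tau_1 t)}(2\tau_1 t+\sin(2\tau_1 t))$. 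In case (2) the identical computation with $\cosh,\sinh$ replacing $\cos,\sin$ yields
\[
\int_0^t a(s)\,ds=\sinh(2\tau_2 t)-2\tau_2 t,\qquad \int_0^t\frac{a'^2(s)}{a(s)}\,ds=4\tau_2^2\bigl(\sinh(2\tau_2 t)+2\tau_2 t\bigr),
\]
and inserting these into (\ref{alpha})--(\ref{varphi}) produces $\alpha_2(t)$ and $\varphi_2(t)$. Theorem \ref{thm11} then yields (\ref{Linearized-Li-Yau's est-1}) and (\ref{Linearized-Li-Yau's est-2}) respectively.

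The one point needing a little care is that Theorem \ref{thm11} requires $a$ to be positive and $C^1$ on all of $(0,\infty)$, whereas $4\tau_1\sin^2(\tau_1 s)$ vanishes at $s=\pi/\tau_1$; this is exactly why $\tau_1$ is restricted to $(0,\pi/t)$. To address this I fix $t>0$ and $\tau_1\in(0,\pi/t)$, note that then $a(s)=4\tau_1\sin^2(\tau_1 s)>0$ on $(0,t]$, and extend $a|_{(0,t]}$ to a positive $C^1$ function $\tilde a$ on $(0,\infty)$ for which $\tilde a'^2/\tilde a$ remains integrable near $0$ and locally bounded elsewhere. Since the right-hand side $\varphi(t)$ and the coefficient $\alpha(t)$ of (\ref{ap}) depend on the weight only through $a(t)$, $a'(t)$, $\int_0^t a$ and $\int_0^t a'^2/a$, applying Theorem \ref{thm11} to $\tilde a$ and evaluating at $t$ gives (\ref{Linearized-Li-Yau's est-1}) at that $t$; as $t>0$ was arbitrary, part (1) follows. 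No extension is needed for part (2), because $4\tau_2\sinh^2(\tau_2 s)$ is positive on all of $(0,\infty)$ and $a'^2/a$ is continuous there. Thus the argument is essentially just the trigonometric/hyperbolic computation above, with no analytic obstacle beyond the one already handled in Theorem \ref{thm11}.
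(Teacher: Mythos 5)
Your proposal is correct and follows essentially the same route as the paper, which obtains Corollary \ref{cor12} by substituting $a(t)=4\tau_1\sin^2(\tau_1 t)$ and $a(t)=4\tau_2\sinh^2(\tau_2 t)$ into (\ref{alpha})--(\ref{varphi}) and invoking Theorem \ref{thm11}; your extra care in extending the sine weight to a positive $C^1$ function on $(0,\infty)$ for fixed $t$ is a reasonable way to make that substitution rigorous, and your integral computations check out. (The paper also records a second, different proof in \S 5 via linearization of the concave function $\Psi_t$ from Theorem \ref{thm12}, but that is presented only as an alternative.)
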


In particular, taking
$\tau_2=|K|>0$ in (\ref{Linearized-Li-Yau's est-2}) yields Li-Xu's inequality on complete Riemannian manifolds with Ric$\geq -k(k\geq 0)$ (\cite{LX}).
 \begin{cor} \label{cor13}  Let $(M, F, m)$ and $u(t, \cdot)$ be the same as in Theorem \ref{thm11} or \ref{thm41}.  Assume that Ric$_N\geq K (K\neq 0)$ for some $N\in [n, \infty)$. Then we have (\ref{ap}) in which
 \beqn \alpha(t)&=&1-\frac{\sinh (2Kt)-2Kt}{2\sinh^2(Kt)}, \ \ \ \ \varphi(t)=-\frac {NK}2(1-\coth(Kt)). \label{LX-F}\eeqn  Its linearized version is exactly (\ref{ap-1}).
 \end{cor}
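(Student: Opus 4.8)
The plan is to obtain Corollary \ref{cor13} as the special case $\tau_2=|K|$ of Corollary \ref{cor12}(2), which is admissible because $K\neq 0$ forces $|K|\in(0,\infty)$. Beyond this substitution the only work is to simplify the resulting $\alpha_2(t)$ and $\varphi_2(t)$ using the single relation $\tau_2^2=K^2$; no new analytic input is needed. First I would record that $a(t)=4|K|\sinh^2(|K|t)$ does satisfy (A1)--(A2): indeed $\frac{a(t)}{a'(t)}=\frac{\tanh(|K|t)}{2|K|}\to 0$ as $t\to 0+$ and $\frac{a'^2(t)}{a(t)}=16|K|^3\cosh^2(|K|t)$ is continuous and integrable on every $[0,T]$, so Corollary \ref{cor12}(2), hence Theorem \ref{thm11}, genuinely applies with this $a$.

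For $\varphi$, put the last two terms of $\varphi_2(t)$ over the common denominator $4\tau_2\sinh^2(\tau_2 t)$; since $\tau_2^2=K^2$ the numerator collapses to $K^2\bigl[(\sinh(2\tau_2 t)+2\tau_2 t)+(\sinh(2\tau_2 t)-2\tau_2 t)\bigr]=2K^2\sinh(2\tau_2 t)$, so those terms sum to $\frac{2K^2\sinh(2\tau_2 t)}{4\tau_2\sinh^2(\tau_2 t)}$, which by $\sinh(2\tau_2 t)=2\sinh(\tau_2 t)\cosh(\tau_2 t)$ and $K^2=\tau_2^2$ equals $\tau_2\coth(\tau_2 t)=|K|\coth(|K|t)$. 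Hence $\varphi_2(t)=\frac{N}{2}\bigl(-K+|K|\coth(|K|t)\bigr)$, and since $\coth$ is odd one has $|K|\coth(|K|t)=K\coth(Kt)$ for either sign of $K$, giving $\varphi(t)=-\frac{NK}{2}\bigl(1-\coth(Kt)\bigr)$. For $\alpha$, note that $\sinh^2(|K|t)=\sinh^2(Kt)$ is even in $K$, while $\frac{K}{|K|}\bigl(\sinh(2|K|t)-2|K|t\bigr)=\sinh(2Kt)-2Kt$ for both signs of $K$ (the sign flip from $K/|K|$ cancels that of the odd function $x\mapsto\sinh(2x)-2x$), so $\alpha(t)=1-\frac{\sinh(2Kt)-2Kt}{2\sinh^2(Kt)}$. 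Inserting these $\alpha$ and $\varphi$ into (\ref{ap}) gives the asserted inequality.

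For the last sentence of the statement, I would use that (\ref{alpha})--(\ref{varphi}) depend on $a$ only through the scale-invariant quantities $\frac{1}{a}\int_0^t a$ and $\frac{1}{a}\int_0^t\frac{a'^2}{a}$, hence are unchanged under $a\mapsto c\,a$ for $c>0$. Since $a(t)=4|K|\sinh^2(|K|t)$ has Taylor expansion $4|K|^3 t^2+O(t^4)$ at $t=0$, ``linearizing'' $a$ --- keeping only its leading monomial --- amounts, by this scale invariance, to taking $a(t)=t^2$; that is precisely the choice made in Corollary \ref{cor11}, which is itself a consequence of Theorem \ref{thm11}, so the linearized inequality is exactly (\ref{ap-1}) and does hold as stated. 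Equivalently, expanding the closed forms directly yields $\alpha(t)=1-\frac{2}{3}Kt+O(t^3)$ and $\varphi(t)=\frac{N}{2t}-\frac{NK}{2}+\frac{NK^2}{6}t+O(t^3)$, whose displayed terms match those in (\ref{ap-1}) verbatim.

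I do not anticipate any genuine obstacle: the whole argument is a substitution followed by hyperbolic-identity bookkeeping. The only steps deserving care are the sign computations when $K<0$ (so that $\tau_2=|K|\neq K$) and the verification that $a(t)=4|K|\sinh^2(|K|t)$ meets (A1)--(A2); both are short and routine.
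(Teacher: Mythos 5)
Your proof is correct and follows the same route the paper intends: substitute $\tau_2=|K|$ into Corollary \ref{cor12}(2), which the paper states in the sentence immediately preceding Corollary \ref{cor13}, and then simplify via hyperbolic identities, the only delicate point being the sign bookkeeping when $K<0$, which you handle correctly by noting that $\sinh^2$ is even, $x\mapsto\sinh(2x)-2x$ and $\coth$ are odd, and $K/|K|$ supplies exactly the compensating sign. Your treatment of the final sentence (that the linearized version is (\ref{ap-1})) via the scale invariance $a\mapsto c\,a$ of (\ref{alpha})--(\ref{varphi}) together with the Taylor expansion $a(t)=4|K|^3t^2+O(t^4)$, or equivalently a direct Taylor expansion of $\alpha$ and $\varphi$, is a sound and somewhat more explicit justification than the paper offers, which simply asserts the claim.
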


As an application of Theorems \ref{thm11} and \ref{thm41}, we obtain the Harnack inequality by following the proof of Corollary 1.1 in \cite{Xia2}.
 \begin{cor}  Under the same assumptions as in Theorem \ref{thm11} or \ref{thm41}, we have
\beq u(t_1, x_1) \leq u(t_2, x_2) \exp \left(\frac{ d_F^2(x_2, x_1)}{4(t_2-t_1)^2}\int_{t_1}^{t_2}\alpha(t)dt+\int_{t_1}^{t_2}\frac{\varphi(t)}{\alpha(t)}dt\right) \label{H-ineq-1}  \eeq for all  $0<t_1<t_2< \infty$ and $x_1, x_2\in M$. \end{cor}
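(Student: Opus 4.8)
The plan is to integrate the pointwise estimate (\ref{ap}) along a well-chosen curve in $(0,\infty)\times M$, following the proof of Corollary 1.1 in \cite{Xia4}. Put $f:=\log u$. On the interval $[t_1,t_2]$ one has $\alpha(t)>0$ (for $K\le 0$ this is immediate from (\ref{alpha}); for $K>0$ it is part of the range of validity implicit in the statement, since $\varphi/\alpha$ already appears in (\ref{H-ineq-1})), so (\ref{ap}) may be rewritten as
\be \partial_t f\ \geq\ \frac{1}{\alpha(t)}\Big(F^2(\nabla f)-\varphi(t)\Big)\qquad\text{on }(0,\infty)\times M. \label{H-reform}\ee
Fix $0<t_1<t_2<\infty$ and $x_1,x_2\in M$. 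Since $(M,F)$ is forward complete, the Finsler Hopf--Rinow theorem provides a minimal geodesic from $x_2$ to $x_1$; parametrizing it on $[0,1]$ with constant speed yields $\gamma:[0,1]\to M$ with $\gamma(0)=x_2$, $\gamma(1)=x_1$ and $F(\dot\gamma(s))\equiv d_F(x_2,x_1)$. Define $\eta:[t_1,t_2]\to M$ by $\eta(t):=\gamma\!\big(\tfrac{t_2-t}{t_2-t_1}\big)$, so that $\eta(t_1)=x_1$, $\eta(t_2)=x_2$, and, by positive $1$-homogeneity of $F$, $F\big(-\dot\eta(t)\big)\equiv \tfrac{d_F(x_2,x_1)}{t_2-t_1}=:b$ is constant in $t$.

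The only genuinely Finslerian ingredient is the one-sided Cauchy--Schwarz inequality $df(v)\geq -F(\nabla f)\,F(-v)$, valid for every $v\in TM$: indeed $df(v)=-df(-v)\geq -F^*(df)\,F(-v)=-F(\nabla f)\,F(-v)$, where $F^*$ is the dual Minkowski norm and we used $F^*(df)=F(\nabla f)$ (where $df=0$ both sides vanish). Using the chain rule along $\eta$, then (\ref{H-reform}), and finally the elementary bound $\tfrac{x^2}{\alpha}-bx\geq-\tfrac{\alpha b^2}{4}$ (for $x\ge 0$, $\alpha>0$, $b\ge 0$), we obtain for a.e.\ $t\in[t_1,t_2]$
\beqn \frac{d}{dt}f\big(t,\eta(t)\big)&=&\partial_t f+df(\dot\eta)\ \geq\ \partial_t f-F(\nabla f)\,F(-\dot\eta)\\ &\geq&\frac{1}{\alpha(t)}F^2(\nabla f)-b\,F(\nabla f)-\frac{\varphi(t)}{\alpha(t)}\ \geq\ -\frac{\alpha(t)\,b^2}{4}-\frac{\varphi(t)}{\alpha(t)}. \eeqn
Integrating over $[t_1,t_2]$ and recalling $\eta(t_1)=x_1$, $\eta(t_2)=x_2$ and $b^2=d_F^2(x_2,x_1)/(t_2-t_1)^2$ gives
\be \log u(t_1,x_1)-\log u(t_2,x_2)\ \leq\ \frac{d_F^2(x_2,x_1)}{4(t_2-t_1)^2}\int_{t_1}^{t_2}\alpha(t)\,dt+\int_{t_1}^{t_2}\frac{\varphi(t)}{\alpha(t)}\,dt, \ee
and exponentiating yields (\ref{H-ineq-1}).

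The computation itself is short; the point that requires care is its justification. One must know that $t\mapsto f(t,\eta(t))$ is absolutely continuous on $[t_1,t_2]$ so that the fundamental theorem of calculus applies, which rests on the interior regularity of the Finsler heat flow together with the strict positivity of $u$ (so that $\log u$ and $\nabla\log u$ are controlled along $\eta$); the integrands $\alpha$ and $\varphi/\alpha$ are themselves continuous on the compact subinterval $[t_1,t_2]\subset(0,\infty)$ by (\ref{alpha})--(\ref{varphi}) and (A1)--(A2), so no singularity at $t_1$ intervenes. These regularity issues, in particular the chain rule along curves for solutions of the (nonlinear) Finsler heat flow, are handled exactly as in the proof of Corollary 1.1 in \cite{Xia4}, which is why the argument "follows" that one.
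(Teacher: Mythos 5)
Your proof is correct and follows essentially the same approach the paper intends, namely integrating the Li--Yau estimate (\ref{ap}) along a minimal geodesic from $x_2$ to $x_1$, using the one-sided Finsler Cauchy--Schwarz inequality $df(\dot\eta)\geq -F(\nabla f)F(-\dot\eta)$, and completing the square in $F(\nabla f)$; this is exactly the argument carried out for Corollary \ref{cor16} in \S 5 and the argument the paper is pointing to in Corollary 1.1 of \cite{Xia4}. Your added caveat that $\alpha(t)>0$ is needed on $[t_1,t_2]$ (so that both the quadratic minimization and the integral $\int_{t_1}^{t_2}\varphi/\alpha\,dt$ make sense) is well taken and correctly observed to be implicit in the statement.
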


 Note that Theorems \ref{thm11} and \ref{thm41} exclude the case when $\alpha(t)$ is a constant which is not equal to $1$. For the case when $\alpha(t)=\alpha$, which is a constant greater than $1$,  Ohta-Sturm obtained Davies' gradient estimate (\ref{Davis's est}) in the compact case and the author of the present paper obtained the Li-Yau type gradient estimate in the complete and noncompact case for positive solutions to the heat flow (\cite{OS2}, \cite{Xia2}).  More generally, we obtain a generalized Li-Yau inequality via the linearized heat semigroup approach (see Theorem \ref{thm51} below). As applications, we generalize Ohta-Sturm's estimate in both the compact case and the complete noncompact case as follows.

\begin{thm} \label{thm12} Let $(M, F, m)$ and $u(t, \cdot)$ be the same as in Theorem \ref{thm11} or \ref{thm41}. Assume that Ric$_N\geq -K (K>0)$.  Then there is a number $\chi_0\in (1, 1+\frac{\pi^2}{K^2t^2})$ such that $\partial_t(\log u)\geq -\frac {NK}4\chi_0,$ and
\beq F^2(\nabla\log u)-\alpha\partial_t(\log u)\leq \frac{N\alpha^2}{2t}+\frac{NK\alpha^2}{4(\alpha-1)} \label{DE*}\eeq for any constant $\alpha>1$ and $t>0$.
\end{thm}

 For the case when Ric$_N\geq K>0$, we have a different type of gradient estimate (see Corollary \ref{cor51} below). Finally we give several equivalent characterizations of Ric$_\infty\geq K$ for some $K\in \mathbb R$ via the linearized heat semigroup approach (see Theorem \ref{thm61} below).

\section{Preliminaries}
In this section, we briefly review some basic concepts in Finsler geometry.  For more details, we refer to  \cite{Sh}, \cite{Oh1} and \cite{Xia3}.
\subsection{Finsler metrics and curvatures}
Let $(M, F)$ be an $n$-dimensional Finsler manifold and $TM$ (resp. $T^*M$) be the tangent (resp. cotangent) bundle. We say that a Finsler metric $F$ is {\it reversible} if $F(x, -y)=F(x, y)$ for any $(x, y)\in TM$. Otherwise, we define the {\it reversibility} $\Lambda$ of $F$ by
$$\Lambda:= \sup\limits_{(x, v)\in TM\setminus\{0\}}\frac{ F(x, -v)}{F(x, v)}.$$ Obviously, $\Lambda\in [1, \infty]$ and $\Lambda=1$ if and only if $F$ is reversible.
In local coordinates $(x^i, y^i)$, let $g_{ij}(x, y):=\frac 12[F^2(x, y)]_{y^iy^j}$. Then $g_y=g_{ij}(x, y)dx^i\otimes dx^j$ is called the {\it fundamental tensor} of $F$. Given a Finsler metric $F$ on $M$,  its dual $F^*: T^*M\rightarrow \mathbb R$ is defined by $F^*(x, \xi):=\sup_{0\neq y\in T_xM}\frac{\xi(y)}{F(x, y)}$ for $(x, \xi)\in T^*M$. Then $F^*$ is also a Finsler metric on $M$ (\cite{Sh}).

 Given a non-vanishing smooth vector field $Y$, we obtain the weighted Riemannian metric $g_Y=(g_{ij}(Y))$ on $M$ induced by $F$ with the reference vector $Y$, which is given by
\begin{eqnarray}g_Y(v, w)=g_{ij}(x, Y_x)v^iw^j, \ \ \ \ \mbox{for} \ v, w\in T_xM\ \mbox {and} \ x\in M. \label{w-metric}\end{eqnarray} Obviously, $F^2(Y)=g_Y(Y, Y)$. If $F$ satisfies (\ref{unif cs cons}), then $F$ has finite reversibility $\Lambda$, i.e.,
\begin{eqnarray}1\leq \Lambda \leq \min\{\sqrt{\kappa}, \sqrt{1/{\kappa^*}}\}.\label{Lambda}\end{eqnarray}   $F$ is Riemannian if and only if $ \kappa^*=1$ if and only if $\kappa=1$ (\cite{Oh1}).

 For $x_1, x_2\in M$, the {\it distance} from $x_1$ to $x_2$ is defined by $d_F(x_1, x_2):=\inf_{\gamma}\int_0^1F(\dot{\gamma}(t))dt$, in which the infimum is taken over all $C^1$ curves $\gamma: [0,1]\rightarrow M$ from $\gamma(0)=x_1$ to $\gamma(1)=x_2$. Note that $d_F(x_1, x_2)\neq d_F(x_2, x_1)$  unless $F$ is reversible. $d_F(p, \cdot)$ is differentiable almost everywhere on $M$ and $F(\nabla d_F)=1$ almost everywhere on $(M, F)$.  If $F$ has finite reversibility $\Lambda$, then
\beq \Lambda^{-1} d_F(q, p)\leq d_F(p, q)\leq \Lambda d_F(q, p).\label{dd}\eeq
 Now we define the {\it forward and backward geodesic balls} of radius $r$ with the center at $x\in M$ by
\begin{eqnarray*}B^+_r(x):=\{z\in M\mid d_F(x,z)<r\}, \ \ \  B^-_r(x):=\{z\in M\mid d_F(z, x)<r\}. \end{eqnarray*}

A $C^1$ curve $\gamma: [0, \ell]\rightarrow M$ is called a {\it geodesic} if it has constant speed (i.e.,
$F(\gamma, \dot\gamma)$ is constant) and if it is locally minimizing. Such a geodesic is in fact a $C^\infty$ curve.  A Finsler manifold $(M, F)$ is said to be {\it forward complete (resp. backward complete)} if each geodesic defined on $[0, \ell)$ (resp. $(-\ell, 0]$) can be extended to a geodesic defined on $[0, \infty)$ (resp. $(-\infty, 0]$). If the reversibility of $F$ is finite, then the forward completeness and the backward completeness are equivalent. We simply say that $F$ is {\it complete} in this case.

Let $TM_0=TM\setminus\{0\}$ be the slit tangent bundle of $M$ and $\pi: TM_0\rightarrow M$ be the projection map. The pull-back bundle $\pi^*TM$ on $TM_0$ admits a unique linear connection $D$, called the {\it Chern connection}. It is uniquely determined by the Finsler metric $F$ (\cite{Sh}, \cite{Xia3}).
Given a non-vanishing vector field $Y$ on $M$,  the {\it Riemannian curvature} $R^Y$ is defined by
\begin{eqnarray*}R^Y(U, V)W:=D^Y_UD^Y_V W-D^Y_V D^Y_U W-D^Y_{[U, V]}W,\end{eqnarray*} for any vector fields $U, V, W$ on $M$, where $D^Y$ is the covariant derivative defined by the Chern connection $D$ with respect to the reference vector $Y$.   $R^Y$ is independent of the choice of connections (\cite{Sh}).  For two linearly independent vectors $v, w\in T_xM\setminus\{0\}$, the {\it flag curvature} with flag $v$  is defined by
\begin{eqnarray*} K(v, w)=\frac{g_V(R^V(v, w)w, v)}{g_V(v,v)g_V(w, w)-g_V(v, w)^2},\end{eqnarray*} where $V$ is a geodesic field around $x$ with $v=V_x\in T_xM$, i.e., $D^V_VV=0$. The flag curvature $K(v, w)$ coincides with the sectional curvature of the $2$-plane spanned by $v, w$ with respect to the Riemannian metric $g_V$ in a neighborhood of $x$. The {\it Ricci curvature} is defined by
\begin{eqnarray*}Ric(v):=\sum_{i=1}^{n-1}K(v, e_i),\end{eqnarray*} where $e_1, \cdots, e_{n-1}, \frac v{F(v)}$ form an orthonormal basis of $T_xM$ with respect to $g_V$.

\begin{defn}[\cite{Oh1}]\label{def21}
Given a unit vector $v\in T_{x}M$, let $\gamma:(-\varepsilon,\varepsilon)\longrightarrow M$ be the geodesic with $\gamma(0)=x$ and $\dot\gamma(0)=v$. We set $dm =e^{-\Psi(\gamma(t))}\textmd{vol}_{\dot\gamma}$ along $\gamma$, where $\textmd{vol}_{\dot\gamma}$ is the volume form of $g_{\dot\gamma}$. Define the weighted  Ricci curvature involving a parameter $N\in(n,\infty)$ by
 $$Ric_N(v):=Ric(v)+{(\Psi\circ \gamma)^{''}(0)}-\frac{(\Psi\circ \gamma)^{\prime}(0)^{2}}{N-n},$$ where Ric is the Ricci curvature of $F$. Also define Ric$_\infty(v):=Ric(v)+{(\Psi\circ \gamma)^{''}(0)}$ and Ric$_n(v):=\lim \limits_{N\rightarrow n}{\rm Ric}_N$.
\end{defn}

We remark that $(\Psi\circ \gamma)^{\prime}(0)$ is exactly the S-curvature with respect to $m$ and  $(\Psi\circ \gamma)^{''}(0)$ is the change rate of S-curvature along $\gamma$ (\cite{Sh}, \cite{Sh1}). Moreover,  $\textmd{Ric}_N(\lambda v)=\lambda^2\textmd{Ric}_N(v)$ for any $\lambda\geq 0$ and $N\in[n, \infty]$.  We say that $\textmd{Ric}_N\geq K$ for some $K\in {\mathbb R}$ if $\textmd{Ric}_N(x, v)\geq KF^2(x, v)$ for all $(x, v)\in TM$.

\subsection{The gradient and Finsler Laplacian}
Let ${\mathcal L}: TM \rightarrow T^*M$ be the {\it Legendre transform} associated with the Finsler metric $F$ and its dual $F^*(x, \xi)$ on $M$, that is, a transformation $\mathcal L$ from $T_xM$ to $T_x^*M$ for any $x\in M$ defined by
\beqn {\mathcal L}( y):=\left\{\begin{array}{ll} g_{y}(y, \cdot),  \ \ \ \ \ \ \ \ \ \ \ \ y\in T_xM\setminus\{0\}, \\
 0, \ \ \ \ \ \ \ \ \ \ \ \ \ \ \ \ \ \ \ \  y=0.\end{array}\right. \eeqn
 One can check that $\mathcal L$ is a diffeomorphism from $TM\setminus \{0\}$ onto $T^*M\setminus \{0\}$ and norm-preserving, namely, $F(x, y)=F^*(x, \mathcal L(y))$ for any $(x, y)\in TM$. Consequently, $g^{ij}(x, y)=g^{*ij}(x, \xi)$, where $(g^{ij}(x, y))=(g_{ij}(x, y))^{-1}$ and $g^{*ij}(x, \xi):=\frac 12\left(F^{*2}(x, \xi)\right)_{\xi_i\xi_j}$ with  $\xi: =\mathcal L(y)=\xi_idx^i\in T^*_xM\setminus\{0\}$ (\cite{Sh}). It is easy to see that $F^{*2}(\xi)=g^{*ij}(\xi)\xi_i\xi_j$ by homogeneity. If $F$ satisfies (\ref{unif cs cons}), then  $F^*$ also satisfies uniform convexity and uniform smoothness with
\begin{eqnarray}\tilde \kappa^*F^{*2}(x, \eta)\leq g^*_{\xi}(\eta, \eta)\leq \tilde \kappa F^{*2}(x, \eta)\label{unif cons}\end{eqnarray} for any $x\in M$, $\xi\in T_x^*M\setminus\{0\}$ and $\eta\in T_x^*M$, where $\tilde \kappa =(\kappa^*)^{-1}$ and $\tilde \kappa^*=\kappa^{-1}$ (See Lemma 2.1.2, \cite{Xia3}).

For any $u\in C^\infty(M)$,
the {\it gradient  vector} $\nabla u(x)$ of  $u$ is defined by $\nabla u(x)
:={\mathcal L}^{-1}(du)(x)\in  T_xM$. Obviously, $\nabla u=0$ if $du=0$. In a local coordinate system, we reexpress $\nabla u$ as
\begin{eqnarray*}
\nabla u(x):=\left\{\begin{array}{ll}
                 g^{*ij}(du(x))\frac{\partial u}{\partial x^i}\frac{\partial}{\partial x^j}  & x\in M_u,\\
                 0 & x\in M\setminus M_u,
               \end{array}\right. \label{grad-u}
          \end{eqnarray*}
where $M_u=\{x\in M|du(x)\neq 0\}$. In general, $\nabla u$ is only continuous on $M$, but smooth on $M_u$.

The {\it Hessian} of $u$ is defined by
$$\nabla^2u(X, Y)=g_{\nabla u}\left(D^{\nabla u}_X\nabla u, Y\right) $$ for any $X, Y\in TM$. It is easy to check that $\nabla^2u(X, Y)$ is symmetric with respect to $X$ and $Y$.

 Given a smooth measure $m$ on $(M, F)$,  the {\it Finsler Laplacian} $\Delta$ of a function $u\in H^1_{\rm{loc}}(M)$ is defined by
 \begin{eqnarray*}\int_M\phi\Delta udm: =-\int_M d\phi(\nabla u)dm  \label{Laplacian} \end{eqnarray*}  for any $\phi\in C^\infty_0(M)$ (\cite{Sh}). Obviously, $\Delta$ is a nonlinear divergence type differential operator.
 Locally, write $dm=\sigma(x)dx$. Then
\begin{eqnarray}\Delta u=\frac 1{\sigma}\frac{\partial}{\partial x^i}\left(\sigma g^{ij}(\nabla u)\frac{\partial u}{\partial x^j}\right) \ \ \ \ \ \ \ {\rm{on}}\ \  M_u. \label{loc-Lap}\end{eqnarray}
For any non-vanishing smooth vector field $Y$ on $M$, we define the  {\it weighted gradient vector} and the {\it weighted Laplacian} on $M$ by
\begin{eqnarray*} \nabla^{Y}u:=  g^{ij}(x, Y_x)\frac{\partial u}{\partial x^{j}}\frac{\partial }{\partial x^{i}} \label{eq nablau}\end{eqnarray*}
and $\Delta^{Y}u:=\mbox{div}_m(\nabla^{Y}u)$ in the sense of distribution. We remark that $\Delta^{\nabla u}$ is exactly the linearization of $\Delta$ on $M_u$. Moreover,  $\nabla^{\nabla u}u=\nabla u$ and $\Delta^{\nabla u}u=\Delta u$.

Let $H^1(M): =W^{1, 2}(M)$ and $H_0^{1}(M)$ be the completions of $C^\infty(M)$ and $C_0^{\infty}(M)$(the space of smooth functions with compact support) respectively under the norm
   \begin{eqnarray*}\|u\|_{H^1}:=\|u\|_{L^2(M)}  +  \frac 12\|F^*(du)\|_{L^2(M)}+  \frac 12 \| F^*(-du)\|_{L^2(M)}.\label{norm1}\end{eqnarray*}  Then $H^{1}(M)$ and $H^{1}_0(M)$ are Banach spaces with respect to the norm $\|\cdot\|_{H^1}$. Moreover, we denote by $H^1_{loc}(M)$ the space of functions in $H^1(\Omega)$ for all relatively compact open sets $\Omega\subset M$ and $H^1_c(M)$ the space of functions in $H^1_{loc}(M)$ with compact support.
 In \cite{OS2}, the authors gave pointwise Bochner-Weitzenb\"ock formulae, which play important roles in Finsler geometry. Moreover, they also gave the integrated Bochner-Weitzenb\"ock formulae with the help of the following fact to overcome the ill-posedness of $\nabla u$ on $M\setminus M_u$.
\begin{lem} \label{lem21} For each $f\in H^1_{loc}(M)$, we have $df=0$ almost everywhere on $f^{-1}(0)$. If $f\in H^1_{loc}(M)\cap L^\infty_{loc}(M)$, then $d(f^2/2)=fdf=0$ also holds almost everywhere on $f^{-1}(0)$.\end{lem}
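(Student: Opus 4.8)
The plan is to recognize the statement as the classical fact that a Sobolev function has vanishing weak gradient almost everywhere on each of its level sets, and then to transfer it to the Finsler setting. Both assertions are local and purely measure-theoretic, so I would fix a coordinate chart $(\Omega,(x^i))$ with $\overline\Omega$ compact. On $\overline\Omega$ the dual norm $F^*(x,\cdot)$ is comparable to the Euclidean norm (either by the uniform convexity and smoothness (\ref{unif cs cons}), or simply by compactness of $\overline\Omega\times S^{n-1}$), so that $f\in H^1_{\mathrm{loc}}(M)=W^{1,2}_{\mathrm{loc}}$ becomes, in coordinates, $f\in W^{1,2}_{\mathrm{loc}}(\Omega)\subset W^{1,1}_{\mathrm{loc}}(\Omega)$; its weak differential is the measurable one-form $df=\sum_i\partial_i f\,dx^i$, and ``$df=0$'' on a set means ``$\partial_i f=0$ for every $i$'' there up to a Lebesgue-null set. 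Hence it suffices to prove: if $u\in W^{1,1}_{\mathrm{loc}}(\Omega)$ then $\nabla u=0$ a.e.\ on $u^{-1}(0)$.

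For this I would argue by approximation. For $\varepsilon>0$ let $g_\varepsilon(s):=\operatorname{sgn}(s)\max\{|s|-\varepsilon,\,0\}$, which is globally $1$-Lipschitz, satisfies $g_\varepsilon(0)=0$ and $|g_\varepsilon(s)-s|\le\varepsilon$, and has $g_\varepsilon'(s)=\mathbf 1_{\{|s|>\varepsilon\}}$ a.e. By the chain rule for a Lipschitz function composed with a Sobolev function, $g_\varepsilon\circ u\in W^{1,1}_{\mathrm{loc}}(\Omega)$ with $\nabla(g_\varepsilon\circ u)=\mathbf 1_{\{|u|>\varepsilon\}}\nabla u$. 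Letting $\varepsilon\downarrow0$: $g_\varepsilon\circ u\to u$ uniformly on $\Omega$, while $\mathbf 1_{\{|u|>\varepsilon\}}\nabla u\to\mathbf 1_{\{u\neq0\}}\nabla u$ pointwise a.e.\ and is dominated by $|\nabla u|\in L^1_{\mathrm{loc}}$; passing to the limit in the definition of the weak gradient gives $\nabla u=\mathbf 1_{\{u\neq0\}}\nabla u$ a.e., i.e.\ $\nabla u=0$ a.e.\ on $u^{-1}(0)$. Translating back through the chart, $df=0$ a.e.\ on $f^{-1}(0)$, which is the first assertion.

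For the second assertion, assume in addition $f\in L^\infty_{\mathrm{loc}}(M)$. On any relatively compact set we have $|f|\le C$ and $s\mapsto s^2/2$ is Lipschitz on $[-C,C]$, so the same chain rule yields $f^2/2\in H^1_{\mathrm{loc}}(M)$ with weak differential $d(f^2/2)=f\,df$ (which lies in $L^2_{\mathrm{loc}}$ because $|f|\le C$ and $df\in L^2_{\mathrm{loc}}$). Since the scalar factor $f$ vanishes pointwise on $f^{-1}(0)$, $f\,df=0$ a.e.\ there; equivalently this is immediate from the first assertion, since $df=0$ a.e.\ on $f^{-1}(0)$. This completes the proof.

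I do not anticipate a genuine obstacle: this is a Stampacchia-type lemma (used, e.g., in \cite{OS2}), and the only points requiring a little care are the reduction to a coordinate chart together with the comparability of $F^*$ with the Euclidean norm, and the correct invocation of the chain rule for Lipschitz compositions of Sobolev functions, with the $\varepsilon\downarrow0$ limit justified by dominated convergence.
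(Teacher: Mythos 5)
The paper does not prove this lemma; it is quoted from Ohta--Sturm \cite{OS2} as a standard Sobolev-space fact, so there is no paper argument for you to match. Your proof supplies the classical Stampacchia-type argument. The localization to a coordinate chart where $F^*$ is comparable to the Euclidean norm is exactly the right reduction, and, as you note, compactness of $\overline\Omega$ alone already gives that comparability --- which is important, since the lemma is stated and used without the hypothesis (\ref{unif cs cons}). The overall truncation-and-dominated-convergence scheme is sound.

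One step deserves tightening: the appeal to ``the chain rule for a Lipschitz function composed with a Sobolev function'' for $g_\varepsilon(s)=\operatorname{sgn}(s)\max\{|s|-\varepsilon,0\}$. Since $g_\varepsilon$ has kinks at $\pm\varepsilon$, the identity $\nabla(g_\varepsilon\circ u)=\mathbf 1_{\{|u|>\varepsilon\}}\nabla u$ a.e.\ is ambiguous on $u^{-1}(\{\pm\varepsilon\})$, and the standard resolution of that ambiguity in the Lipschitz chain rule is precisely the vanishing-on-level-sets statement you are trying to prove (applied at levels $\pm\varepsilon$), so there is a latent circularity. Two clean repairs: (i) replace $g_\varepsilon$ by a genuinely $C^1$ truncation such as $f_\varepsilon(s)=\sqrt{s^2+\varepsilon^2}-\varepsilon$ for $s>0$ and $f_\varepsilon(s)=0$ for $s\le 0$, as in Gilbarg--Trudinger Lemma 7.6 (which the present paper cites elsewhere), yielding $\nabla u^+=\mathbf 1_{\{u>0\}}\nabla u$ and $\nabla u^-=-\mathbf 1_{\{u<0\}}\nabla u$ and hence $\nabla u=\nabla u^+-\nabla u^-=\mathbf 1_{\{u\neq 0\}}\nabla u$; or (ii) keep $g_\varepsilon$ but pass $\varepsilon\downarrow0$ along a sequence $\varepsilon_j$ for which $u^{-1}(\{\pm\varepsilon_j\})$ is Lebesgue-null (true for all but countably many $\varepsilon$, since these level sets are pairwise disjoint), after which the $C^1$ chain rule applies off a null set and your limiting argument proceeds unchanged. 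Your treatment of the second assertion is fine: $s\mapsto s^2/2$ is $C^1$ with derivative bounded on the range of the locally bounded $f$, and $f\,df=0$ a.e.\ on $f^{-1}(0)$ follows either from the pointwise factor $f=0$ there or from the first assertion.
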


Lemma \ref{lem21} implies that $\Delta u=0$ almost everywhere on $M\setminus M_u$.

\begin{prop}[\cite{Oh1}, \cite{OS2}]\label{thm22}  Given $u\in H^2_{loc}(M)\cap C^1(M)\cap H^1(M)$ with $ \Delta u\in H^1_{loc}(M)$ ,  we have
 \begin{eqnarray} -\int_M d\phi \left[\nabla^{\nabla u}\left(\frac{F^2(\nabla u)}2\right) \right]dm\geq  \int_M \phi\left\{d(\Delta u)(\nabla u)+ \textmd{Ric}_{\infty}(\nabla u)\right\}dm\label{Bochner3''} \end{eqnarray} and
\begin{eqnarray}-\int_M d\phi\left[\nabla^{\nabla u}\left(\frac{F^2(\nabla u)}2 \right)\right]dm \geq \int_M \phi\left\{d(\Delta u)(\nabla u)+ \textmd{Ric}_N(\nabla u) +\frac{(\Delta u)^2}{N}\right\}dm\label{Bochner4''}\end{eqnarray} for  $N\in [n, \infty]$ and all nonnegative functions $\phi\in H^1_c(M)\cap L^\infty(M)$.\end{prop}

\begin{remark} \label{rem21} {\rm In Proposition \ref{thm22}, the test function $\phi\in H_c^1(M)\cap L^\infty(M)$ can be extended to $\phi\in H_0^1(M)\cap L^\infty(M)$ under the extra assumption that $\Delta u\in H^1(M)$ and \beq F\big(\nabla^{\nabla u}(F^2(\nabla u))\big)\in L^2(M).\label{u-L}\eeq
 In fact, if $\phi\in H_0^1(M)\cap L^\infty(M)$, then it follows from $\Delta u\in H^1(M)$ and $u\in H^1(M)$ that the integrals of the right hand side (RHS) in (\ref{Bochner3''})-(\ref{Bochner4''}) are well defined. The integrability of LHS in (\ref{Bochner3''})-(\ref{Bochner4''})  can be seen from the following inequality
\beqn & &\int_M\mid(d\phi-d\tilde\phi)\big(\nabla^{\nabla u}F^2(\nabla u)\big)\mid dm \nonumber \\
& &\leq \int_M\max\big\{F^*\big(d(\phi-\tilde \phi)\big), F^*\big(d(\tilde\phi-\phi)\big)\big\}F\big(\nabla^{\nabla u}F^2(\nabla u)\big)dm\nonumber \\
 & &\leq  \left(\|F^*(d(\phi-\tilde\phi))\|_{L^2(M)}+\|F^{*}(d(\tilde\phi-\phi))\|_{L^2(M)}\right)\cdot \|F\big(\nabla^{\nabla u}F^2(\nabla u)\big)\|_{L^2(M)} \eeqn for any $\tilde\phi\in C_0^\infty(M)$. The rest follows from the arguments by approximation.} \end{remark}

\subsection {The heat flow and linearized heat semigroups}

Recall that a function $u(t, x)$ on $[0, T]\times M$, $T>0$ ($T$ may be the infinity),  is a {\it global  solution} to the heat flow $\partial_tu=\Delta u$ if $u\in L^2\left([0, T], H_0^1(M)\right)\cap H^1\left([0, T], H^{-1}(M)\right)$ and if, for every $t\in [0, T]$ and $\phi\in C_0^\infty(M)$, it holds that
  \beq \int_M \phi \partial_tudm\ = -\int_M d\phi(\nabla u)dm. \label{heat-eq}\eeq The test function $\phi$ can be taken from $H_0^1(M)$ by similar arguments as in Remark \ref{rem21}.    The following existence and regularity for the heat flow were due to Ohta-Sturm.
   \begin{prop} [\cite{Oh1}, \cite{OS1}] \label{prop21*} Assume that $\Lambda<\infty$. Then $\mathrm{(i)}$\  for each initial datum $u_0\in H^1_0(M)$ and $T>0$, there exists a unique global solution $u(t, x)$ for $t\in[0, T]$ to the heat equation, and the distributional Laplacian $ \Delta u(t, x)$ is absolutely continuous with respect to $m$ for all $t\in (0, T)$.

 $\mathrm{(ii)}$\  One can take the continuous version of a global solution $u$, and it enjoys the $H^2_{loc}$-regularity in $x$ as well as the $C^{1, \beta}$-regularity in both $t$ and $x$ for some $0<\beta<1$. Moreover, $\partial_tu$ lies in $H^1_{loc}(M)\cap C(M)$, and further in $H^1_0(M)$ if $F$ has a finite uniform smoothness constant $\kappa$. The elliptic regularity shows that $u$ is $C^\infty$ on $\cup_{t>0}(\{t\}\times M_{u(t, x)})$. \end{prop}

  For each $s\geq 0$ and $T>0$, given a non-vanishing measurable vector field $V_t$ on $[s, s+T]\times M$, we define a linearized heat equation $\partial_tv=\Delta^{V_t}v$ in the sense that
  \begin{eqnarray} \int_s^{s+T}\int_M v \partial_t\phi dm dt=\int_s^{s+T}\int_Md\phi(\nabla^{V_t}v)dmdt \label{v-he}\end{eqnarray} for $v\in H_0^1(M)$ and $\phi\in C_0^\infty((s, s+T)\times M)$. Similarly, $\phi$ can also be taken from $H_0^1((s, s+T)\times M)$ as before.  This can be regarded as a linear heat equation on an evolving weighted Riemannian manifold $(M, g_{V_t}, m)$. By the classical results on linear differential operators involving a parameter $t$, we have the following existence and regularity result.
  \begin{prop}\label{prop22} Assume that $(M, F, m)$ is a Finsler measure space satisfying (\ref{unif cs cons}). Then

  (i) for each $s\geq 0, T>0$ and $f\in H_0^1(M)$, there exists a unique solution $v(t, \cdot)$ to $\partial_tv=\Delta^{V_t}v$ in the sense of (\ref{v-he}) with $v(s, x)=f(x)$. Moreover, $v$ lies in $L^2\left([s, s+T], H_0^1(M)\right)\cap H^1\left([s, s+T], H^{-1}(M)\right)$ as well as $C([s, s+T], L^2(M))$.

  (ii) the solution $v(t, x)$ in (i) is H\"older continuous on $(s, s+T)\times M$.
  \end{prop}

The proof follows from Theorem 11.3 in \cite{RR} (also see Proposition 13.18 in \cite{Oh1} or Proposition 3.1 in \cite{Oh2}).
 Given a global solution $u(t, x)\in H_0^1(M)$ to the heat equation $\partial_tu=\Delta u$ for $t\geq 0$, we fix a family of non-vanishing measurable vector fields $V_t$ such that $V_t=\nabla u(t, \cdot)$ on $M_{u_t}$. Proposition \ref{prop22} implies that for every $s\geq 0, T>0$ and $t\in [s, s+T]$, there exists a uniquely determined linearized operator $P_{s, t}^{\nabla u}: H_0^1(M)\rightarrow H_0^1(M)$ such that for every $f\in H_0^1(M)$, the unique weak solution $v(t, x)$ to $\partial_tv=\Delta^{V_t}v$ with $v(s, \cdot)=f$ is given by  $v(t, x)=P_{s, t}^{\nabla u}(f)(x)$. Thus we obtain the following linearized heat equation (in the weak sense),  introduced by Ohta-Sturm in \cite{OS2}.
 \begin{eqnarray}\partial_t[P_{s, t}^{\nabla u}(f)]=\Delta^{V_t}[P_{s, t}^{\nabla u}(f)], \ \ \ \ P_{s, s}^{\nabla u}(f)=f. \label{LHE}\end{eqnarray}  Here we will suppress the dependence on the choice of $V_t$ in $P_{s, t}^{\nabla u}$ by abuse of notation.

 Since $\Delta^{V_t}$ depends on $t$, $P_{s, t}^{\nabla u}$ is linear but not self-adjoint. S. Ohta (cf. \cite{Oh1}) also defines the adjoint heat semigroup $\{\hat P_{t, s}^{\nabla u}(\psi)\}_{s\in[0, t]}\subset H_0^1(M)$ for any $\psi\in H_0^1(M)$ and $t>0$ as the weak solution to the equation
  \begin{eqnarray} \partial_s[\hat P_{t, s}^{\nabla u}(\psi)]=-\Delta^{V_s}[\hat P_{t, s}^{\nabla u}(\psi)], \ \ \ \ \hat P_{t, t}^{\nabla u}(\psi)=\psi,  \label{AHE}\end{eqnarray} or equivalently,
  \begin{eqnarray*} \partial_s[\hat P_{t, t-s}^{\nabla u}(\psi)]=\Delta^{V_{t-s}}[\hat P_{t, t-s}^{\nabla u}(\psi)], \ \ \ s \in [0, t].\end{eqnarray*}
  It is easy to see that
   \begin{eqnarray} \int_M \psi  P_{s, t}^{\nabla u}(f) dm=\int_M f\hat P_{t, s}^{\nabla u}(\psi) dm. \label{PP}\end{eqnarray}

\section{Properties of the linearized heat semigroup}

In this section, we study the properties of the linearized heat semigroup as preparation. For this, we always assume that $F$ satisfies (\ref{unif cs cons}) to ensure the existence of the linearized heat semigroup.

 \begin{prop}\label{prop31} The family of linear operators $\{P_{s, t}^{\nabla u}\}_{0\leq s<t}$ enjoys the following properties.

(1) $u(t, x)=P_{s, t}^{\nabla u}(u(s, \cdot))(x)$ for any global solution $u(t,x)$ to $\partial_tu=\Delta u$ and $0\leq s<t<\infty$.

(2) (Semigroup property) \ $P_{\sigma, t}^{\nabla u}\circ P_{s, \sigma}^{\nabla u}=P_{s, t}^{\nabla u}$ for any $s\leq \sigma<t$.

(3) (Contraction property)\  $\|P_{s, t}^{\nabla u}\|_{L^2}\leq 1$. In particular, $P_{s, t}^{\nabla u}(0)=0$. Further, $\|P_{s, t}^{\nabla u}\|_{L^p}\leq 1$ for any $p\in [1, \infty)$.

(4)(Boundedness)\  If $0\leq f\leq k$ almost everywhere on $M$, then $0\leq P_{s,t}^{\nabla u}(f)\leq k$ almost everywhere on $M$ for all $t>s$. Further, assume that $(M, F)$ is complete and $m(M)<\infty$. Then the linearized heat semigroup is conservative, i.e., $P_{s, t}^{\nabla u}(1)=1$. In this case, if $k_1\leq f\leq k_2$ almost everywhere on $M$ for $k_1, k_2\in \mathbb R$, then $k_1\leq P_{s, t}^{\nabla u}(f)\leq k_2$ almost everywhere on $M$.
\end{prop}

\begin{proof} (1)  For any $\psi\in C^\infty_0(M)$, observe that
$$\frac d{ds}\int_Mu(s, \cdot)\hat P_{t, s}^{\nabla u}(\psi)dm= \int_M \left\{-d(\hat P_{t, s}^{\nabla u}(\psi))(\nabla u(s, \cdot))+du(\nabla^{V_s}\hat P_{t, s}^{\nabla u}(\psi))\right\}dm =0$$ by (\ref{heat-eq}) and (\ref{AHE}). From this and (\ref{PP}), we have $$\int_M \psi(x)u(t,x)dm=\int_M u(s,x)\hat P_{t, s}^{\nabla u}(\psi)(x)dm=\int_M \psi(x)P_{s, t}^{\nabla u}(u(s, \cdot))(x)dm, $$ which implies that $u(t, x)=P_{s, t}^{\nabla u}(u(s, \cdot))(x)$ a.e. on $M$ by arbitrariness of $\psi$. Since  $u(t, x)$ and $P_{s, t}^{\nabla u}(u(s, \cdot))$ are continuous, (1) holds on $M$.

 (2) For any $f\in H_0^1(M)$, $P_{s, t}^{\nabla u}(f)$ is the unique solution to (\ref{LHE}) at the time $t$ by Proposition \ref{prop22}. On the other hand, let $\tilde f: = P_{s, \sigma}^{\nabla u}(f)\in H_0^1(M)$, which is the unique solution to (\ref{LHE}) at $\sigma\in [s, t)$. Taking this $\tilde f$ as an initial value, we get a solution  $P_{s, t}^{\nabla u}(\tilde f)$ to (\ref{LHE})$_1$ at the time $t$. Thus $P_{\sigma, t}^{\nabla u}\circ P_{s, \sigma}^{\nabla u}(f)=P_{s, t}^{\nabla u}\big(P_{s, t}^{\nabla u}(f)\big)$ is a solution to (\ref{LHE})$_1$ at the time $t$. Obviously, $P_{\sigma, t}^{\nabla u}\circ P_{s, \sigma}^{\nabla u}(f)=f$ when $t=s$. By uniqueness of solutions and the arbitrariness of $f$, we get the conclusion.

 (3) Let $v(t, \cdot): =P_{s, t}^{\nabla u}(f)$. Then $v(t, \cdot)$ is a solution to (\ref{LHE}) by definition. By the divergence lemma, we have
\begin{eqnarray*} \frac d{dt}(\|v(t, \cdot)\|^2_{L^2})=-2\int_M\|\nabla^{V_t}v\|^2_{g_{V_t}}dm\leq 0,\end{eqnarray*} which means that $\|v(t, \cdot)\|_{L^2}$ is decreasing in $t$. In particular, $\|P_{s, t}^{\nabla u}(f)\|_{L^2}\leq \|f\|_{L^2}$ for $0\leq s<t<\infty$, which means that $\|P_{s, t}^{\nabla u}\|_{L^2}\leq 1$ and $\|P_{s, t}^{\nabla u}(0)\|_{L^2}\leq 0$. Obviously, $\|P_{s, t}^{\nabla u}(0)\|_{L^2}$ is nonnegative. Thus $P_{s, t}^{\nabla u}(0)=0$.  Further,  $\|P_{s, t}^{\nabla u}\|_{L^p}\leq 1$ for any $p\in [1, \infty)$ by interpolation.

(4)  Let $v(t, \cdot)=P_{s, t}^{\nabla u}(f)$ and $\bar v(t, \cdot): =\min\{\max\{v(t, \cdot), 0\}, k\}$ for any fixed $k\geq 0$. Then $0\leq \bar v\leq v$ if $v\geq 0$ and $\bar v=0$ if $v\leq 0$.
Obviously, $d\bar v=0$ on the set $\{v\leq 0\}$. By Lemma 7.6 in \cite{GT} (also see the proof of Theorem 1.1 in \cite{ZX}), we have $d\bar v=0$  on $\{v\geq k\geq 0\}$ and $d\bar v=dv$ on $\{0\leq v<k\}$ in the weak sense.
  Consequently, $\bar v\in H_0^1(M)$ and it is a solution of (\ref{LHE})$_1$ with $\bar v(s, x)=f(x)$. Moreover,
\begin{eqnarray*}& & \int_M v(t, \cdot)(v-\bar v)(t, \cdot)dm\\
& &=\int_s^t\frac d{dr}\left(\int_Mv(r, \cdot)(v-\bar v)(r,\cdot)dm\right)dr \\
& & =\int_s^t\int_M\left\{-2\|\nabla^{V_r}v\|^2_{g_{V_r}}+d\bar v(\nabla^{V_r}v)+dv(\nabla^{V_r}\bar v)\right\}dm dr\leq 0,\end{eqnarray*}
where we used $d\bar v(\nabla^{V_r}v)=dv(\nabla^{V_r}\bar v)\leq \|\nabla^{V_r}v\|_{g_{V_r}}^2$ in the last inequality. Since $v(v-\bar v)\geq 0$ everywhere and $v(v-\bar v)>0$ when $v\neq \bar v$, the above inequality means that $v=\bar v$ and hence $0\leq v\leq k$ a.e. on $M$.

  Since $(M, F)$ is complete and $F$ satisfies (\ref{unif cs cons}), which implies that the reversibility of $F$ is finite, we have $H_0^1(M)=H^1(M)$ by Lemma 11.4 in \cite{Oh1}. In particular, if $m(M)<\infty$, then $1\in H_0^1(M)$ and we have the mass conservation: \beqn \int_M u_tdm=\int_M u_s dm \eeqn for any global solution $u_t: =u(t, \cdot)$ to the heat equation and all $0\leq s<t<\infty$ by (\ref{heat-eq}) with $\phi=1$. This means that $P^{\nabla u}_{s, t}(1)=1$. Consequently, $P_{s, t}^{\nabla u}(f-k)=P_{s, t}^{\nabla u}(f)-k$ for any constant $k$. The last assertion follows from the first one by using $f-k_i$ ($i=1, 2$) instead of $f$.
 \end{proof}

Note that the adjoint heat semigroup satisfies the linearized heat equation backward in time. Therefore the analogues of Proposition \ref{prop22} and Proposition \ref{prop31} hold for $\{\hat P_{t, t-s}^{\nabla u}\}_{s\in [0, t]}$. Since $\|P_{s, t}^{\nabla u}(f)\|_{L^2}^2$ is non-increasing in $t$, it is uniquely extended to linear contractive semigroups acting on $L^2(M)$. In fact, (3) in Proposition \ref{prop31} implies that $P_{s, t}^{\nabla u}$  can be extended to a contraction operator on $L^p(M)$ for all $p\in [1, \infty)$ and $s<t$. Thus we can regard $\{P_{s, t}^{\nabla u}\}_{0\leq s<t}$  as a contractive heat semigroup acting on $L^p(M)$ associated with the operator $\Delta^{V_t}-\partial_t$ on an evolving weighted Riemannian manifold $(M, g_{V_t}, m)$.

\begin{prop} \label{prop32} For any $f, g\in H_0^1(M)$ and $0\leq s<t<\infty$, we have
\beq \left(P^{\nabla u}_{s, t}(fg)\right)^2\leq P_{s, t}^{\nabla u}(f^2)P_{s, t}^{\nabla u}(g^2). \label{Pfg}\eeq Further, if $(M, F)$ is complete and  $m(M)<\infty$, then $\left(P^{\nabla u}_{s, t}(f)\right)^2\leq P^{\nabla u}_{s, t}(f^2)$.  \end{prop}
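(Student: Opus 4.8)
The plan is to read (\ref{Pfg}) as a Cauchy--Schwarz inequality for the positivity-preserving linear operator $P^{\nabla u}_{s,t}$, and to obtain the last assertion by recognizing the constant function $1$ as a weak solution of the linearized heat equation. First I would note that for $f,g\in H^1_0(M)\subset L^2(M)$ all of $f^2,\ g^2,\ fg$ and $(f-\lambda g)^2$ ($\lambda\in\mathbb R$) lie in $L^1(M)$, so $P^{\nabla u}_{s,t}$ acts on them through its extension to a positivity-preserving linear contraction on $L^1(M)$; here the $L^1$-contraction and positivity follow from Proposition \ref{prop31}(3)--(4) together with the density of nonnegative $C^\infty_0$-functions in the nonnegative cone of $L^1(M)$ and $L^1$-continuity. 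Fixing $\lambda\in\mathbb Q$, positivity of $P^{\nabla u}_{s,t}$ applied to $(f-\lambda g)^2\ge 0$ gives, by linearity of $P^{\nabla u}_{s,t}$, outside a null set $N_\lambda$,
\[
\lambda^2 P^{\nabla u}_{s,t}(g^2)(x)-2\lambda P^{\nabla u}_{s,t}(fg)(x)+P^{\nabla u}_{s,t}(f^2)(x)\ \ge\ 0 .
\]
Setting $N=\bigcup_{\lambda\in\mathbb Q}N_\lambda$, still a null set, for every $x\notin N$ the polynomial $\lambda\mapsto \lambda^2 P^{\nabla u}_{s,t}(g^2)(x)-2\lambda P^{\nabla u}_{s,t}(fg)(x)+P^{\nabla u}_{s,t}(f^2)(x)$ is nonnegative on $\mathbb Q$, hence on all of $\mathbb R$ by continuity; a nonnegative quadratic has nonpositive discriminant (if the leading coefficient $P^{\nabla u}_{s,t}(g^2)(x)$ vanishes it is affine, forcing $P^{\nabla u}_{s,t}(fg)(x)=0$), which is exactly (\ref{Pfg}) pointwise almost everywhere.

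For the last statement, assume $(M,F)$ complete and $m(M)<\infty$, so that $1\in L^2(M)$. I would first check $1\in H^1_0(M)$: fix $o\in M$, set $\rho=d_F(o,\cdot)$, and take $\phi_k=\eta(\rho/k)$ with $\eta$ smooth, $\eta\equiv 1$ on $[0,1]$, $\eta\equiv 0$ on $[2,\infty)$, $|\eta'|\le 2$. Forward completeness makes closed forward balls compact and gives $B^+_k(o)\uparrow M$, so each $\phi_k$ is compactly supported, with $F(\nabla\phi_k)$ and $\overleftarrow F(\overleftarrow\nabla\phi_k)$ bounded by $C/k$ a.e.\ on the annulus $B^+_{2k}(o)\setminus B^+_k(o)$ (using $F(\nabla\rho)=1$ and finite reversibility (\ref{Lambda})); since $m(M)<\infty$ this yields $\|\phi_k-1\|_{H^1}\to 0$, and mollifying each $\phi_k$ shows $1\in H^1_0(M)$. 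Now $1$ satisfies (\ref{v-he}): the right-hand side vanishes because $\nabla^{V_t}1=0$, and the left-hand side vanishes because $\phi\in C^\infty_0((s,s+T)\times M)$. By the uniqueness in Proposition \ref{prop21}(i), this weak solution with initial value $1$ is $P^{\nabla u}_{s,t}(1)$, so $P^{\nabla u}_{s,t}(1)=1$; taking $g=1$ in (\ref{Pfg}) then gives $\big(P^{\nabla u}_{s,t}(f)\big)^2\le P^{\nabla u}_{s,t}(f^2)$.

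I expect the Cauchy--Schwarz computation itself to be entirely routine; the two points deserving care are the measure-theoretic one of making the exceptional null set independent of $\lambda$ (handled by restricting to rational $\lambda$ and invoking continuity) and, in the last part, the verification that $1\in H^1_0(M)$ — this is the only place where forward completeness together with $m(M)<\infty$ is genuinely used, and it is the step I would write out most carefully.
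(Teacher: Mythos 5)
Your proof is correct and follows essentially the same route as the paper: apply positivity of $P^{\nabla u}_{s,t}$ to $(\lambda f - g)^2$ and read off a nonpositive discriminant, then deduce $P^{\nabla u}_{s,t}(1)=1$ once $1\in H^1_0(M)$. The only differences are cosmetic refinements: you make the exceptional null set independent of $\lambda$ by restricting to rationals (the paper elides this), you construct cutoffs directly to show $1\in H^1_0(M)$ where the paper simply invokes $H^1_0(M)=H^1(M)$ for a complete space with finite reversibility, and you get $P^{\nabla u}_{s,t}(1)=1$ by checking that the constant $1$ solves the linearized equation and invoking uniqueness, whereas the paper gets it directly from the boundedness property Proposition~\ref{prop31}(4) with $k_1=k_2=1$; both are equally valid one-liners once $1\in H^1_0(M)$ is secured.
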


\begin{proof}  For any $\lambda\in \mathbb R$, it follows from Proposition \ref{prop31} that
\beqn 0\leq P_{s, t}^{\nabla u}\left((\lambda f+g)^2\right)=\lambda^2 P_{s, t}^{\nabla u}(f^2)+2\lambda P_{s, t}^{\nabla u}(fg)+P_{s, t}^{\nabla u}(g^2).\eeqn Hence the discriminant  $\left(P_{s, t}^{\nabla u}(fg)\right)^2-P_{s, t}^{\nabla u}(f^2)P_{s, t}^{\nabla u}(g^2)$ is nonpositive, which implies (\ref{Pfg}). Further, if $(M, F)$ is complete and  $m(M)<\infty$, then $P^{\nabla u}_{s, t}(1)=1$ by  Proposition \ref{prop31}(4). Letting $g(x)=1$ in (\ref{Pfg})  yields $(P^{\nabla u}_{s, t}(f))^2\leq P^{\nabla u}_{s, t}(f^2)$.  \end{proof}

If $(M, F, m)$ is compact, then $F$ satisfies (\ref{unif cs cons}) and $m(M)<\infty$. It follows from Propositions \ref{prop31} and \ref{prop32} that
\begin{cor} \label{cor31} The linearized heat semigroup $\{P_{s, t}^{\nabla u}\}$ on a compact Finsler measure space is  conservative, i.e., $P_{s, t}^{\nabla u}(1)=1$. In particular, if $k_1\leq f\leq k_2$ almost everywhere on $M$ for $k_1, k_2\in \mathbb R$, then $k_1\leq P_{s, t}^{\nabla u}(f)\leq k_2$ almost everywhere on $M$. Moreover, $\left(P^{\nabla u}_{s, t}(f)\right)^2\leq P^{\nabla u}_{s, t}(f^2)$ for any $f\in H^1(M)$.   \end{cor}

 For a complete and noncompact Finsler measure space $(M, F, m)$, we need a volume growth estimate of geodesic balls to study the conservativeness of $\{P_{s, t}^{\nabla u}\}$. Now we define the exponential map $\exp_x: T_xM\rightarrow M$ by $\exp_xv:=\gamma(1)$ for any $v\in T_xM$ and $x\in M$ if there is a geodesic $\gamma: [0, 1]\rightarrow M$ with $\gamma(0)=x$ and $\gamma'(0)=v$. For any $x\in M$ and a unit vector $v\in T_xM$,  let $r(v)\in (0, \infty]$ be the supremum of $r>0$ such that the geodesic $\gamma(t)=\exp_x(tv)$ is minimal on $[0, r]$. If $r(v)<\infty$, then $\exp_x(r(v)v)$ is called a {\it cut point} of $x$, and the cut locus $\mathcal C_x$ of $x$ is defined as the set of all cut points of $x$, which has zero measure. The exponential map $\exp_x$ is a $C^\infty$-diffeomorphism from $\{tv|v\in T_x M, F(v) = 1, t\in (0, r(v))\}$ to $D_x:=M\setminus (\mathcal C_x\cup {x})$ (\cite{Sh}).

 For any $x\in M$ and $z\in D_{x}$, we choose the geodesic polar coordinates $(r, \theta)$ centered at $x$ such that the distance function $r(z)=d_F(x, z)=F(v)$ and $\theta^{\alpha}(z)=\theta^\alpha(\frac v{F(v)})$, where $v=\exp^{-1}(z)\in T_{x}M\setminus\{0\}$. Note that $r(z)$ is smooth on $D_{x}$ and $\nabla r$ is a geodesic vector field with $F(\nabla r)=1$. By the Gauss lemma, the unit radial coordinate vector $\frac {\partial}{\partial r}$ is orthogonal to coordinate vectors $\frac{\partial}{\partial \theta^\alpha}$ with respect to $g_{\nabla r}$ for $1\leq \alpha\leq n-1$ (\cite{Sh}). Write $dm|_{\exp_{x}(rv_0)}=\sigma( r, \theta)dr d\theta$, where $v_0=\frac v{F(v)}\in I_{x}:=\{v\in T_{x}M| F(v)=1\}$. By (\ref{loc-Lap}), we have
\beq \Delta r=\frac{\partial}{\partial r}(\log \sigma).\label{Lap-r}\eeq
 Let $B_R^+(x)$ be the forward geodesic ball of radius $R$ on $(M, F)$. Set $\mathcal D_r(x): =\{v_0\in I_{x}|rv_0\in \exp^{-1}(D_{x}\cap {B}_{r}^+(x))\}$. Obviously, $\mathcal D_t(x)\subset \mathcal D_s(x)$ for any $0<s<t<R$. The volume  of $B_R^+(x)$ with respect to $m$ is given by
\begin{eqnarray*}m(B_R^+(x))=\int_{B_R^+(x)}dm=\int_{B_R^+(x)\cap D_{x}}dm=\int_0^Rdr\int_{\mathcal D_r(x)}\sigma( r, \theta)d\theta. \label{dm}\end{eqnarray*}

 If $(M, F, m)$ is a forward complete Finsler manifold with Ric$_N\geq K$ for any $N\in [n, \infty)$ and $K\in \mathbb R$, then we have
\beq m(B_R^+(x))\leq m(B_1^+(x))R^Ne^{R\sqrt{(N-1)|K|}} \label{V-B-N}\eeq for any $x\in M$ and $R>1$ (Proposition 5.1, \cite{Xia}). Next we consider the case when Ric$_\infty\geq K$.

\begin{lem}\label{lem31}  Let $(M, F, m)$ be an $n$-dimensional forward complete Finsler measure space and $m_0:=\sup\limits_{z\in r^{-1}(r_0)}\Delta r(z)$ whenever the distance function $r(z)= d_F(x, z)$ is smooth and $r(z)>r_0$ for some $r_0>0$. Assume that Ric$_\infty\geq K$ for some $K\in \mathbb R$. Then, for any $x\in M$, the following inequalities hold for $R>2r_0$,
\beq  m(B^+_{R}(x))\leq \left\{\begin{array}{ll} C m\left(B^+_{2r_0}(x)\right), &K>0, \\
C m\left(B^+_{2r_0}(x)\right) \exp\left\{-\frac {K}4\left(R-r_0-\frac{m_0}K\right)^2\right\}, & K<0, \end{array}\right.
\label{m-Br} \eeq
 and
 \beq  m(B^+_{R}(x))\leq \left\{\begin{array}{ll}  C m\left(B^+_{2r_0}(x)\right) R, & m_0=0, \\
C m\left(B^+_{2r_0}(x)\right)e^{m_0R}, & m_0\neq 0, \end{array}\right. \label{m-Br-0}\eeq when $K=0$, where $C=C(r_0, m_0, K)$ is a positive constant depending on $r_0, m_0, K$. In particular, $m(M)<\infty$ when $K>0$.
\end{lem}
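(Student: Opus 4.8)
The plan is to integrate the Laplacian comparison estimate for $\Delta r$ along radial geodesics and then integrate in the angular variables. The starting point is the Bochner formula applied to the distance function $r(z) = d_F(x,z)$, which is smooth on $D_x$. Along a unit-speed minimizing geodesic we have $F(\nabla r) = 1$ and $\|\nabla^2 r\|^2_{HS(\nabla r)} \geq \frac{(\Delta r)^2}{n-1}$; combined with (\ref{eq21-1}) and the hypothesis $\mathrm{Ric}_\infty \geq K$ this yields the Riccati-type differential inequality $\partial_r(\Delta r) + \frac{(\Delta r)^2}{n-1} \leq -K$ in the Riemannian sense, but for the weighted case it is cleaner to argue directly: since $\mathrm{Ric}_\infty \geq K$ with $K \leq 0$, one gets $\partial_r(\Delta r) \leq -K - \mathrm{Ric}_\infty(\nabla r) \cdot(\text{something}) \leq -K$ once the Hilbert–Schmidt term is discarded (it is nonnegative). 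Thus $\partial_r(\Delta r)(z) \leq -K = |K|$ wherever $r$ is smooth and $r > r_0$. Integrating from $r_0$ to $r$ along the radial geodesic through $z$ gives $\Delta r(z) \leq m_0 + |K|(r - r_0)$ for $r > r_0$, where $m_0 = \sup_{r^{-1}(r_0)}\Delta r$.

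Next I would feed this into (\ref{Lap-r}), namely $\Delta r = \partial_r(\log\sigma)$. Integrating $\partial_r \log\sigma(r,\theta) \leq m_0 + |K|(r-r_0)$ from $2r_0$ to $R$ (for a fixed $\theta \in I_x$, on the range where the geodesic is still minimizing) produces
\beqn \log\frac{\sigma(R,\theta)}{\sigma(2r_0,\theta)} \leq m_0(R - 2r_0) + \frac{|K|}{2}\left((R-r_0)^2 - r_0^2\right), \eeqn
hence $\sigma(R,\theta) \leq \sigma(2r_0,\theta)\exp\left(m_0(R-2r_0) + \frac{|K|}{2}(R-r_0)^2\right)$. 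When $K < 0$ one completes the square: $m_0 R + \frac{|K|}{2}(R-r_0)^2 = \frac{|K|}{2}\left(R - r_0 + \frac{m_0}{|K|}\right)^2 + (\text{const depending on } r_0, m_0, K)$, which is exactly the exponent $-\frac{K}{4}(R - r_0 - \frac{m_0}{K})^2$ up to a constant factor once one tracks that $|K| = -K$; note I may lose a factor of $2$ versus the $\frac14$ in the statement, so I would be careful to re-derive the Riccati inequality with the sharp constant, possibly using $\|\nabla^2 r\|^2 \geq \frac{(\Delta r)^2}{n-1}$ to gain a quadratic-decay improvement on $\Delta r$ itself rather than just the crude bound — in fact the crude bound $\partial_r(\Delta r)\le -K$ already suffices since we only need the stated form. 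When $K = 0$ the exponent is linear in $R$ if $m_0 \neq 0$, giving $e^{m_0 R}$, and if $m_0 = 0$ then $\sigma(R,\theta) \leq \sigma(2r_0,\theta)$, so the volume grows at most linearly in $R$ — this is where the factor $R$ in (\ref{m-Br-0}) comes from, via $m(B_R^+(x)) = \int_0^R dr\int_{\mathcal D_r(x)}\sigma(r,\theta)\,d\theta$.

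Finally I would assemble the volume estimate. Split $m(B_R^+(x)) = \int_0^{2r_0} dr \int \sigma\, d\theta + \int_{2r_0}^R dr\int \sigma\,d\theta$; the first piece is $\le m(B^+_{2r_0}(x))$, and for the second I substitute the pointwise bound $\sigma(r,\theta) \le \sigma(2r_0,\theta)\exp(\dots)$, pull the $\theta$-integral of $\sigma(2r_0,\theta)$ out (it is bounded by a constant times $m(B^+_{2r_0}(x))$ after comparing the polar-coordinate spheres, using that $\mathcal D_r(x) \subset \mathcal D_{2r_0}(x)$ need not hold but a Bishop–Gromov-type monotonicity of $\int_{\mathcal D_r}\sigma\,d\theta$ does), and integrate the remaining exponential in $r$ from $2r_0$ to $R$. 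The $r$-integral of $\exp(\frac{|K|}{2}(r-r_0)^2)$ is again bounded by a constant times its value at the endpoint $R$ (for $K<0$, since the integrand is increasing), absorbing the extra factor into $C = C(r_0, m_0, K)$; similarly for the $K=0$ cases. The main obstacle I anticipate is handling the cut locus and the non-reversibility carefully: $r$ is only smooth on $D_x$, so the radial integration of $\Delta r = \partial_r\log\sigma$ must be justified on the star-shaped domain $\exp_x^{-1}(D_x \cap B_R^+(x))$, and one must check that extending the integrand by the comparison bound past the cut locus does not cost anything — this is the standard Calabi trick (support functions / barriers from above for $r$), which works here because we only need an upper bound on $\Delta r$ and hence on $\sigma$, and the measure of the cut locus is zero. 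Tracking the precise constant $\frac14$ versus $\frac12$ in the exponent is a minor bookkeeping point that I would settle by redoing the completion of the square with the sign of $K$ made explicit.
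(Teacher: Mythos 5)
Your overall strategy is the same as the paper's in spirit — derive the Laplacian comparison $\Delta r \leq m_0 - K(r-r_0)$ for $r>r_0$, feed it into $\Delta r = \partial_r(\log\sigma)$, and integrate in polar coordinates. Two differences worth noting. First, you derive the comparison directly from the pointwise Bochner formula (\ref{eq21-1}) applied to $r$ (discarding the nonnegative Hilbert--Schmidt term), while the paper cites it from Shen's weighted comparison theorem [Sh1]; both are fine, and your variant is self-contained. Second, and more substantively, the paper's proof uses a Bishop--Gromov ratio argument: it rewrites the density inequality in the form $e^{-\frac{K}{2}(s-r_0-m_0/K)^2}\sigma(t,\theta)\le e^{-\frac{K}{2}(t-r_0-m_0/K)^2}\sigma(s,\theta)$ and integrates in \emph{both} $s$ and $t$ (and then in $\theta$), directly producing a ratio of ball volumes. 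This bypasses the step you needed — converting the surface term $\int_{\mathcal D_{2r_0}}\sigma(2r_0,\theta)\,d\theta$ into a multiple of $m(B^+_{2r_0}(x))$. Your sketch of that conversion (using the Laplacian bound on $(r_0,2r_0)$ and $\mathcal D_{2r_0}(x)\subset\mathcal D_r(x)$ for $r<2r_0$) is correct, so your route is a valid, slightly longer, alternative. The Calabi barrier argument you worry about is unnecessary in either version: one simply declares $\sigma(r,\theta)=0$ past the cut radius along $\theta$, and the polar-coordinate integral $m(B_R^+(x))=\int_0^R\int\sigma\,d\theta\,dr$ handles the cut locus automatically.

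There is, however, a genuine issue you flag but mis-diagnose as ``minor bookkeeping.'' Your argument (and, after correction, the paper's too) produces the exponent $-\frac{K}{2}\bigl(R-r_0-\frac{m_0}{K}\bigr)^2$, not $-\frac{K}{4}\bigl(R-r_0-\frac{m_0}{K}\bigr)^2$: integrating $\partial_r\log\sigma\le m_0-K(r-r_0)$ gives $\log\sigma(R,\theta)-\log\sigma(2r_0,\theta)\le -\frac{K}{2}(R-r_0-m_0/K)^2+\text{const}$, and the subsequent $r$-integration contributes only a polynomial factor that cannot halve the quadratic coefficient. No refinement of the Riccati inequality will rescue the $\frac14$; in fact $\frac14$ is false. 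Consider the Gaussian soliton $(\mathbb R^n,\,\delta,\,e^{-K|x|^2/2}\,dx)$ with $K<0$, which has $\mathrm{Ric}_\infty\equiv K$: there $m(B_R)\sim C\,R^{n-2}e^{-KR^2/2}$, which is not $O\bigl(e^{-\frac{K}{4}R^2}\bigr)$. The exponent in (\ref{m-Br}) as printed is therefore incorrect; it should read $-\frac{K}{2}$. The source of the error in the paper's own proof is the false identity ``$\int_0^r e^{\tau^2/2}d\tau=\sqrt{2\pi(e^{r^2/2}-1)}$'' (the left side grows like $e^{r^2/2}/r$, the right like $e^{r^2/4}$). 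So your hesitation was well-placed, but it is not resolvable by redoing the algebra; the stated constant must change. For the downstream use (the Grigor'yan non-explosion criterion in the proof of Proposition \ref{prop33}), only $\log m(B_R^+)=O(R^2)$ is needed, so $\frac12$ in place of $\frac14$ is harmless.
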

\begin{proof} By the assumptions and Theorem 13.2 in \cite{Sh1}, the distance function $r(z)=d_F(x, z)$ on $D_x\setminus B^+_{r_0}(x)$ satisfies
\beq \Delta r(z)\leq m_0-K\left(r(z)-r_0\right), \label{r-Delta}\eeq  where $m_0:=\sup\limits_{z\in r^{-1}(r_0)}\Delta r(z)$. Combining (\ref{Lap-r}) and (\ref{r-Delta}) together yields
\beq \frac{\partial}{\partial r}(\log \sigma)\leq m_0-K\left(r(z)-r_0\right) \label{sigma-der} \eeq pointwise on $D_x$ for $r(z)>r_0$.

If $K\neq 0$, then integrating (\ref{sigma-der}) on both sides in $r$ from $s\in (r_0, t)$ to $t$ gives
\beqn \log\frac{\sigma(t, \theta)}{\sigma(s, \theta)} &\leq & m_0(t-s)-\frac K2\left((t-r_0)^2-(s-r_0)^2\right) \nonumber\\
&=& -\frac K2 \left(t-r_0-\frac {m_0}K\right)^2+\frac K2 \left(s-r_0-\frac {m_0}K\right)^2,  \eeqn which implies that
\beqn \exp\left\{-\frac K2\left(s-r_0-\frac {m_0}K\right)^2\right\}\sigma(t, \theta)\leq \exp\left\{-\frac K2\left(t-r_0-\frac {m_0}K\right)^2\right\}\sigma(s, \theta).  \eeqn  Integrating both sides of this inequality in $t$ from $r_1\in (s, t)$ to $r_2\in (t, \infty)$ and then in $s$ from $r_0$ to $r_1$ yield \beqn & & \int_{r_0}^{r_1}\exp\left\{-\frac K2\left(s-r_0-\frac {m_0}K\right)^2\right\}ds \int_{r_1}^{r_2}\sigma(t, \theta)dt\nonumber \\
 & &\leq \int_{r_1}^{r_2}\exp\left\{-\frac K2\left(t-r_0-\frac {m_0}K\right)^2\right\}dt \int_{r_0}^{r_1}\sigma(s, \theta)ds.
 \eeqn
 Further, integrating both sides of this inequality on $\mathcal D_t(x)$ and using $\mathcal D_t(x)\subset \mathcal D_s(x)$ for any $0<s<t<R$, one obtains that
\beqn & &\frac{m(B^+_{r_2}(x))-m(B^+_{r_1}(x))} {m(B^+_{r_1}(x))-m(B^+_{r_0}(x))} \nonumber \\
& & \leq \int_{r_1}^{r_2}\exp\left\{-\frac K2\left(t-r_0-\frac {m_0}K\right)^2\right\}dt \Big/ \int_{r_0}^{r_1}\exp\left\{-\frac K2\left(s-r_0-\frac {m_0}K\right)^2\right\}ds \nonumber \\
& & \leq  -1+ \int_{r_0}^{r_2}\exp\left\{-\frac K2\left(t-r_0-\frac {m_0}K\right)^2\right\}dt\Big /\int_{r_0}^{r_1}\exp\left\{-\frac K2\left(t-r_0-\frac {m_0}K\right)^2\right\}dt.\eeqn Thus we have
\beq & &\frac{m(B^+_{r_2}(x))-m(B^+_{r_0}(x))} {m(B^+_{r_1}(x))-m(B^+_{r_0}(x))}\nonumber \\
& &\leq  \int_{r_0}^{r_2}\exp\left\{-\frac K2\left(t-r_0-\frac {m_0}K\right)^2\right\}dt \Big /\int_{r_0}^{r_1}\exp\left\{-\frac K2\left(t-r_0-\frac {m_0}K\right)^2\right\}dt. \label{mm-B}\eeq
Note that
\beqn \left(\int_0^re^{s^2/2}ds\right)^2 &=&\int_0^r\int_0^re^{(s^2+t^2)/2}ds dt \nonumber \\
&\leq &\int_0^{\frac{\pi}2}d\theta\int_0^{\sqrt{2}r}e^{{\rho^2}/2}\rho d\rho=\frac{\pi}{2}\left(e^{r^2}-1\right),\eeqn where we used the polar coordinate system $(\rho, \theta)$ in the second inequality. In the same way,
\beqn \left(\int_0^re^{s^2/2}ds\right)^2 &=&\int_0^r\int_0^re^{(s^2+t^2)/2}ds dt \nonumber \\
&\geq &\int_0^{\frac{\pi}2}d\theta\int_0^{r}e^{{\rho^2}/2}\rho d\rho=\frac{\pi}{2}\left(e^{r^2/2}-1\right).\eeqn
Consequently,
\beq \sqrt{\frac{\pi}{2}\left(e^{r^2/2}-1\right)}\leq \int_0^re^{s^2/2}ds\leq \sqrt{\frac{\pi}{2}\left(e^{r^2}-1\right)}\leq \sqrt{\frac{\pi}{2}}e^{r^2/2}. \label{e-integral-1} \eeq
Similarly, we have
\beq \sqrt{\frac{\pi}{2}\left(1-e^{-r^2/2}\right)}\leq \int_0^re^{-s^2/2}ds\leq \sqrt{\frac{\pi}{2}\left(1-e^{-r^2}\right)}\leq \sqrt{\frac{\pi}{2}}. \label{e-integral-2} \eeq
Taking $r_2: =R>r_1:=2r_0$ in (\ref{mm-B}), and using $m(B_{r_0}^+(x)) < m(B_{2r_0}^+(x))$ and (\ref{e-integral-1})--(\ref{e-integral-2}), one obtains
\beqn  m(B^+_{R}(x)) &\leq & m(B_{r_0}(x))+\widetilde C m(B_{2r_0}(x))\int_{r_0}^{r_2}\exp\left\{-\frac K2\left(t-r_0-\frac {m_0}K\right)^2\right\}dt \nonumber \\
&\leq & 2\widetilde C m(B^+_{2r_0}(x))\int_{r_0}^{r_2}\exp\left\{-\frac K2\left(t-r_0-\frac {m_0}K\right)^2\right\}dt\nonumber \\
&\leq & 2\widetilde C m(B^+_{2r_0}(x))\int_{0}^{R}\exp\left\{-\frac K2\left(t-r_0-\frac {m_0}K\right)^2\right\}dt\nonumber \\
&\leq & \sqrt{2\pi} \widetilde C m\left(B^+_{2r_0}(x)\right)
\eeqn
when $K>0$ and similarly,
\beqn  m(B^+_{R}(x)) \leq \sqrt{2\pi} \widetilde C m\left(B^+_{2r_0}(x)\right)\exp\left\{-\frac K2\left(R-r_0-\frac {m_0}K\right)^2\right\}
\eeqn  when $K<0$, where $\widetilde C=\left(\int_{r_0}^{2r_0}\exp\left\{-\frac K2\left(t-r_0-\frac {m_0}K\right)^2\right\}dt\right)^{-1}$, which is a positive constant depending on $r_0, m_0$ and $K$.

If $K=0$, integrating (\ref{sigma-der}) on both sides from $s\in (r_0, t)$ to $t$ gives
\beqn e^{m_0s}\sigma(t, \theta)\leq e^{m_0t}\sigma(s, \theta).  \eeqn
Similar to the previous arguments, we get
\beqn \frac{m(B^+_{r_2}(x))-m(B^+_{r_0}(x))} {m(B^+_{r_1}(x))-m(B^+_{r_0}(x))}  \leq \frac{\int_{r_0}^{r_2}e^{m_0t}dt}{\int_{r_0}^{r_1}e^{m_0t}dt}. \eeqn
Letting $r_2: =R>r_1:=2r_0$ yields
\beqn  m(B^+_{R}(x))\leq \left\{\begin{array}{ll}  \frac R{r_0}m\left(B^+_{2r_0}(x)\right), & m_0=0, \\
\left(1+\frac {e^{m_0R}-e^{m_0r_0}}{e^{2m_0r_0}-e^{m_0r_0}}\right) m\left(B^+_{2r_0}(x)\right), & m_0\neq 0, \end{array}\right.\eeqn which implies (\ref{m-Br-0}). \end{proof}

\begin{remark} {\rm In \cite{St}, the author proved that lower bounds for the Ricci curvature imply upper bound estimates for the volume growth of concentric balls on a general metric measure space. It is worth mentioning that Finsler spaces are not metric measure spaces in the usual sense since the distance function induced by Finsler metric is not symmetric in general. }
\end{remark}

Proposition \ref{prop31} shows that $P_{s, t}^{\nabla u}(1)=1$ if $(M, F)$ is complete and $m(M)<\infty$ (cf. Corollary \ref{cor31}).  For the complete Finsler measure space $(M, F, m)$ with infinite volume $m(M)$, the constant function $1$ cannot be in $H_0^1(M)$. Hence $(M, F)$ is noncompact.  In this case, we define $P_{s,t}^{\nabla u}(1)$ as the (increasing) limit of $P_{s, t}^{\nabla u}(f_i)$ as $i\rightarrow \infty$, where $\{f_i\}_{i\geq 1}$ is any sequence of positive functions in $H_0^1(M)$ increasing to $1$. We say that the linearized heat semigroup $\{P_{s, t}^{\nabla u}\}$ on $(M, F, m)$ is {\it conservative} if $P_{s, t}^{\nabla u}(1)=1$ in this sense.

\begin{prop} \label{prop33} Let $(M, F, m)$ be an $n$-dimensional complete and noncompact Finsler measure space satisfying (\ref{unif cs cons}). Assume that
 \beq \int_1^\infty \frac r{\log\big(m(B_r^+(x_0))}dr=\infty \label{integral}\eeq for some $x_0\in M$.
 Then the linearized heat semigroup $\{P_{s, t}^{\nabla u}\}$ is conservative. \end{prop}
 \begin{proof}  Assume that $\{f_i\}_{i\geq 1}$ is a sequence of positive functions in $H_0^1(M)$ increasing to $1$. Then $ P_{s, t}^{\nabla u}(f_i)\in H_0^1(M)$ are solutions of (\ref{LHE}) with $0\leq P_{s, t}^{\nabla u}(f_i)\leq 1$ by Proposition \ref{prop31}.

 Let $v_i(t, \cdot): =1-P_{s, t}^{\nabla u}(f_i)\in H^1_{loc}(M)$. Then $0\leq v_i\leq 1$ and $v_i$ satisfies $\partial_t v_i=\Delta^{V_t}v_i$ in the weak sense with  $v_i(s, \cdot)=1-f_i\geq 0$ and $\lim\limits_{i\rightarrow \infty}v_i(s, \cdot)=0$, where $V_t=\nabla u(t, \cdot)$ as before.
  Fixing $s\geq 0$ and $T>s$, we have
\beq  \int_{\tau-\delta}^\tau\int_M\phi \partial_tv_i dm dt = -\int_{\tau-\delta}^\tau\int_M d\phi\left(\nabla^{V_t}v_i(t, \cdot)\right)dm dt \label{vfp}\eeq for any $\tau\in (s, T)$, $\delta\in (0, \tau-s]$ and $0\leq \phi\in H_c^1((s, T)\times M)$.

  Given a large $R>0$, let $r(x): =(d_F(x_0, x)-R)_+$ be the distance function from the geodesic ball $B_R:=B_R^+(x_0)$, where $d_F(x_0, \cdot)$ be the distance function from $x_0$ induced by $F$.  We define $$\psi(t, x):=- \frac{\kappa^{*2}r(x)^2}{4\kappa\Lambda^2(\tau+\delta -t)}$$ for $t\in [\tau-\delta, \tau]$, where $\kappa, \kappa^*$ are uniform constants and $\Lambda$ is the reversibility of $F$. Since $r(x)$ is a Lipschitz function and $F(dr)=F(\nabla r)\leq 1$ a.e. on $M$, we have $F(\nabla \psi)=F^*(d\psi)\leq \frac {\kappa^{*2}r(x)}{2\kappa\Lambda (\tau+\delta-t)}$ a.e. on $M$. Thus
 \beq \partial_t\psi+\frac{\kappa}{\kappa^{*2}}F^2(\nabla \psi)\leq 0\label{psi-t}\eeq a.e. on $M$.
 Let $\eta$ be a cut-off function on $M$ with $0\leq \eta\leq 1$ defined by
\beqn \eta(x)=\left\{\begin{array}{lll} 1 &\quad {\rm{on}}\quad B_R, \\
\frac{2R-d_F(x_0, x)}R &\quad {\rm{on}} \quad B_{2R}\setminus B_R, \\
0 &\quad{\rm{on}} \quad M\setminus B_{2R}. \end{array}\right.\label{eta}\eeqn Since $F$ has finite reversibility $\Lambda$ by (\ref{Lambda}),  $F^*(d\eta)\leq \frac \Lambda {R}$ a.e. on $M$. Moreover, by  (\ref{unif cons}), we have
\beq  \tilde\kappa^*F^2(\nabla v_i)=\tilde\kappa^* F^{*2}(d v_i)\leq \|\nabla^{V_t}v_i\|^2_{HS(V_t)}\leq \tilde\kappa F^{*2}(d v_i)=\tilde\kappa F^2(\nabla v_i),\label{ineq-1} \eeq where $\tilde\kappa^*=1/\kappa$, $\tilde\kappa=1/\kappa^*$, and $\|\cdot\|_{HS(V_t)}$ is the Hilbert-Schmidt norm with respect to the weighted Riemannian metric $g_{V_t}$ induced by $F$. Hence
  \beq |d\eta(\nabla^{V_t}v_i)|\leq \|\nabla^{V_t}\eta\|_{HS(V_t)}\cdot \|\nabla^{V_t}v_i\|_{HS(V_t)}\leq\tilde\kappa F^*(d\eta)F(\nabla v_i) \label{ineq-2} \eeq a.e. on $M$. Taking $\phi_i =v_i\eta^2 e^{\psi}$ for each $i$. Note that $\eta e^\psi\in$ Lip$_0(M)\subset H_c^1(M)$ and $v_i\in H_{loc}^1(M)$ for any fixed $t\in [\tau-\delta, \tau]$, where Lip$_0(M)$ is the set of Lipschitz functions on $M$ with compact support. Hence $\phi_i\in H_c^1([\tau-\delta, \tau]\times M)$. It follows from (\ref{vfp})-(\ref{ineq-2}) that
 \beq & & \int_{\tau-\delta}^\tau\int_M (v_i\eta^2 e^{\psi}) \partial_tv_i dmdt  \nonumber \\
 & &= -\int_{\tau-\delta}^\tau\int_M\left\{\eta^2e^\psi\|\nabla^{V_t}v_i\|^2_{HS(V_t)}+2v_i\eta e^\psi d\eta(\nabla^{V_t}v_i)+v_i\eta^2e^\psi d\psi(\nabla^{V_t}v_i) \right\} dm dt\nonumber \\
 & &\leq \int_{\tau-\delta}^\tau\int_M\left\{-\tilde\kappa^*\eta^2e^\psi F^2(\nabla v_i)+\left(\frac {\tilde\kappa^*}2F^2(\nabla v_i)\eta^2+\frac 2{\tilde\kappa^*}{\tilde\kappa^2}v^2_iF^{*2}(d\eta)\right)e^\psi\right\} dm dt\nonumber \\
 & & \ \ \ \ +\int_{\tau-\delta}^\tau\int_M\left(\frac {\tilde\kappa^*}2 F^2(\nabla v_i)+\frac 1{2\tilde\kappa^*}\tilde\kappa^2v_i^2F^{*2}(d\psi)\right)\eta^2e^\psi dm dt\nonumber \\
  & & = \int_{\tau-\delta}^\tau\int_M\frac{\tilde\kappa^2}{\tilde\kappa^*}\left(2 F^{*2}(d\eta)+\frac 12\eta^2F^{*2}(d\psi)\right)v_i^2e^\psi dm dt. \label{P-fi}\eeq
On the other hand, the LHS of (\ref{P-fi})
\beqn \int_{\tau-\delta}^\tau\int_M (v_i\eta^2 e^{\psi}) \partial_tv_i dm dt &=& \frac 12\int_{\tau-\delta}^\tau\int_M (\eta^2 e^\psi)\partial_t(v_i^2) dm dt \nonumber \\
&=& \frac 12\int_{\tau-\delta}^\tau\int_M \partial_t(v_i^2\eta^2 e^\psi) dm dt -\frac 12\int_{\tau-\delta}^\tau\int_M (v_i^2\eta^2 e^\psi)\partial_t\psi dm dt\nonumber \\
&=& \frac 12\int_M \left(v_i^2\eta^2 e^\psi\right)\Big\vert_{\tau-\delta}^\tau dm -\frac 12 \int_{\tau-\delta}^\tau\int_M (v_i^2\eta^2 e^\psi)\partial_t\psi dm dt. \label{F-fi*}
\eeqn
From  this, (\ref{P-fi}) and (\ref{psi-t}) with $\tilde \kappa^*=1/\kappa$ and $\tilde\kappa=1/\kappa^*$, one obtains
\beq \int_M \left(v_i^2\eta^2 e^\psi\right)\Big\vert_{\tau-\delta}^\tau dm &\leq & \int_{\tau-\delta}^\tau\int_M (v_i^2\eta^2 e^\psi)\partial_t\psi dm dt\nonumber \\
& & +\int_{\tau-\delta}^\tau\int_M\frac{\tilde\kappa^2}{\tilde\kappa^*}\Big(4 F^{*2}(d\eta)+\eta^2F^{*2}(d\psi)\Big)v_i^2e^\psi dm dt\nonumber \\
&\leq & \frac{4\tilde\kappa^2}{\tilde\kappa^*}\int_{\tau-\delta}^\tau\int_M v_i^2e^\psi F^{*2}(d\eta) dm dt. \label{F-fi**}\eeq   For $x\in B_{2R}\setminus B_R$ and $t\in (\tau-\delta, \tau)$, we have
$r(x)\geq R$ and $\tau+\delta-t\leq 2\delta$. Consequently,
$$\psi(t, x)\leq -\frac {\kappa^{*2}R^2}{8\delta \kappa\Lambda^2}<0.$$ Thus $\psi$ is zero in $\bar B_R$ and negative outside $B_R$, which means that $e^{\psi}=1$ in $\bar B_R$ and $e^\psi<1$ outside $B_R$. Since $0\leq v_i\leq 1$, by the choice of $\eta$ and (\ref{Lambda}),  (\ref{F-fi*}) implies
\beqn \int_{B_R} v_i^2(\tau, x) dm &\leq &\int_{B_{2R}} (v_i^2\eta^2e^\psi)(\tau, x) dm \leq  \int_{B_{2R}} v_i^2(\tau-\delta, x)dm +\frac{C}{R^2}\int_{\tau-\delta}^\tau\int_{B_{2R}\setminus B_R} e^\psi dm dt \nonumber \\
&\leq & \int_{B_{2R}} v_i^2(\tau-\delta, x) dm +\frac{C}{R^2}(T-s)m\big(B_{2R}\big)\exp\left(-\frac {\kappa^{*2}R^2}{8\delta\kappa\Lambda^2}\right).
\eeqn for some constant $C=C(\kappa, \kappa^*)$ depending on $\kappa$ and $\kappa^*$.
We can choose a small $\delta\in (0, \tau-s]$ such that $(T-s)m(B_{2R})\leq \exp\left(\frac{\kappa^*R^2}{8\delta \kappa^2\Lambda^2}\right)$, i.e.,
\beq 0<\delta\leq \frac{\kappa^{*2}R^2}{8\kappa\Lambda^2\log\big((T-s)m(B_{2R})\big)}.\label{delta}\eeq Thus,
\beq \int_{B_R} v_i^2(\tau, x)  dm \leq \int_{B_{2R}} v_i^2(\tau-\delta, x) dm +\frac{C}{R^2}.\label{iteration-1}
\eeq
Fix a large $R>1$ and $\tau\in (s, T)$, and we define a sequence of radii $\{R_k\}$ with $R_{k+1}=2R_k$, $R_0=R$ and a sequence $\{\delta_k\}\subset (0, \tau-s]$ satisfying (\ref{delta}), i.e.,
\beq 0<\delta_k\leq \frac{\kappa^{*2}R_{k}^2}{8\kappa\Lambda^2\log\big((T-s)m(B_{R_{k+1}})\big)}
=\frac{\kappa^{*2}R_{k+1}^2}{32\kappa\Lambda^2\log\big((T-s)m(B_{R_{k+1}})\big)}.\label{delta-k}\eeq
Let $\tau_0=\tau$, $\delta_0=\delta$ and $\tau_{k+1}=\tau_k-\delta_k$. The inequality (\ref{iteration-1}) gives
\beq \int_{B_{R_k}} v_i^2(\tau_k, x) dm \leq \int_{B_{R_{k+1}}} v_i^2(\tau_{k+1}, x) dm +\frac{C}{R_k^2}.\label{iteration}\eeq

{\it Case 1.}  If  $\tau_{k+1}=s$ after a finite number of iterations, i.e., $\tau=s+\sum_{i=0}^{k}\delta_i$, then
 \beqn \int_{B_R}v^2_i(\tau, x)dm &\leq &\int_{B_{R_{k+1}}}v_i^2(\tau_{k+1}, x)dm+\sum_{j=0}^k\frac{C}{R_j^2}\nonumber \\
 &\leq &\int_{B_{R_{k+1}}}v_i^2(s, x)dm+\frac{2C}{R^2}. \eeqn
  Note that $\lim_{i\rightarrow \infty}v_i(s, \cdot)=0$. Letting $i\rightarrow \infty$ and then letting $R\rightarrow \infty$ in the above inequality yield  $\lim_{i\rightarrow \infty}v_i(\tau, x)=0$ for any $x\in M$, i.e.,  $\lim_{i\rightarrow \infty}P_{s, \tau}^{\nabla u}(f_i)=1$.

 {\it Case 2.} If  $\tau_{k+1}$ reaches $s$ after an infinite number of iterations, i.e., $\tau=s+\sum_{i=0}^\infty\delta_i$, then we can choose $\{\delta_k\}\subset (0, \tau-s]$ satisfying (\ref{delta-k}) such that $\sum_{k=0}^\infty\delta_k=\infty$, which would lead to a contradiction since $\tau-s$ is finite.  In fact, let $\{\delta_k\}\subset (0, \tau-s)$ be a sequence satisfying (\ref{delta-k}). For each $k$, we may assume that
 \beqn \delta_k=\frac{\kappa^{*2}R_{k+1}^2}{8\kappa\Lambda^2\log\big((T-s)m(B_{R_{k+1}})\big)}.\eeqn Otherwise, we choose sufficiently small $\delta_{i}$ satisfying the inequality in (\ref{delta-k}) such that
 \beqn \sum_{i=0}^{\ell_1}\delta_{i}=\frac{\kappa^{*2}R_{2}^2}{8\kappa\Lambda^2\log\big((T-s)m(B_{R_2})\big)}, \ \ \ \sum_{i=t+1}^{t+\ell_k}\delta_{i}=\frac{\kappa^{*2}R_{k+1}^2}{8\kappa\Lambda^2\log\big((T-s)m(B_{R_{k+1}})\big)}, \eeqn where $t=\sum_{i=1}^{k-1}\ell_i$ for $k\geq 2$ with all $\ell_i$ being positive integers, and then argue in the same way as below by replacing $\delta_1$ and $\delta_k (k\geq 2)$ with $\sum_{i=0}^{\ell_1}\delta_{i}$  and $\sum_{i=t+1}^{t+\ell_k}\delta_{i}$ respectively.
Since \beqn \int_R^\infty\frac{r}{\log m(B_r)}dr&=&\sum_{k=0}^\infty\int_{R_k}^{R_{k+1}}\frac{r}{\log m(B_r)}dr
 \leq \sum_{k=0}^\infty\left(\frac{R_{k+1}^2-R_k^2}{2\log m(B_{R_k})}\right)\nonumber \\&=&\sum_{k=0}^\infty\frac {3R_k^2}{2\log m(B_{R_k})},
 \eeqn
 we get $\sum_{k=0}^\infty\frac {R_k^2}{\log m(B_{R_k})}=\infty$  by the choice of $R$ (i.e., $R>1$) and (\ref{integral}). On the other hand, we assume that $\lim_{k\rightarrow \infty}m(B_{R_k})=+ \infty$ (otherwise $m(M)<\infty$, which implies the conclusion by Proposition \ref{prop31}).  This means that $\log(T-s)\leq \log m(B_{R_{k+1}})$ as $k$ is large enough. Consequently,
 $$\sum_{k=0}^\infty\delta_k=\frac {\kappa^{*2}}{8\kappa\Lambda^2} \sum_{k=0}^\infty \frac{R_{k+1}^2}{\log(T-s)+\log m(B_{R_{k+1}})}= \sum_{k=0}^\infty\frac {R_k^2}{\log m(B_{R_k})}=\infty.$$ The claim is proved.

  Finally, combining Case 1 with Case 2 yields that $\tau_{k+1}$ can reach $s$ after a finite number of iterations by diminishing some of $\{\delta_k\}$ and hence
 $\lim_{i\rightarrow \infty}P_{s, \tau}^{\nabla u}(f_i)=1$. Since $\tau\in (s, T)$ is chosen arbitrarily, $\lim_{i\rightarrow \infty}P_{s, t}^{\nabla u}(f_i)=1$ for any $t\in (0, T)$.  This finishes the proof. \end{proof}

\begin{proof} [Proof of Theorem \ref{thm31}] If $M$ is complete and $m(M)<\infty$, then the conclusion follows from Proposition \ref{prop31}(4). Now we assume that $M$ is complete and $m(M)=\infty$. In this case, $M$ is noncompact and Ric$_N\geq K$ by the assumption.  We prove the conclusion according to three cases.

 {\it Case 1}.  $ N\in [n, \infty)$ and $K\in \mathbb R$.  By (\ref{V-B-N}), we have
  $\int_1^\infty\frac {r dr}{\log m(B_r^+(x))} =\infty.$  Thus, $\{P_{s, t}^{\nabla u}\}$ is conservative by Proposition \ref{prop33}.

 {\it Case 2.} $N=\infty$ and $K>0$. In this case,  the volume $m(M)$ of $M$ is finite, i.e., $m(M)<\infty$ by Lemma \ref{lem31}. By Proposition \ref{prop31}(4),  the semigroup $\{P_{s, t}^{\nabla u}\}$ is conservative.

{\it Case 3.} $N=\infty$ and $K\leq 0$.  By Lemma \ref{lem31}, we have (\ref{m-Br}) when $K<0$ and (\ref{m-Br-0}) when $K=0$.
It is easy to check that $\int_1^\infty\frac {r dr}{\log m(B_r^+(x))} =\infty$ in both cases. By Proposition \ref{prop33},  $\{P_{s, t}^{\nabla u}\}$ is conservative.
The rest of the proof follows from the same arguments as in the proofs of Proposition \ref{prop31}(4) and Proposition \ref{prop32}.
\end{proof}

\begin{remark}\label{rem32} {\rm In the same way, we can prove that Theorem \ref{thm31} is true for the adjoint heat semigroup $\{\hat P_{t, s}^{\nabla u}\}$.}\end{remark}

\section{Improved Li-Yau's inequality}

 In this section, we will prove Theorem \ref{thm11} based on Theorem \ref{thm31}. For this, we need the following lemmas.

\begin{lem} \label{lem41} Let $(M, F, m)$ be a complete Finsler measure space satisfying (\ref{unif cs cons}) and  $u(t, x)\in H_0^1(M)$ be a positive global solution to $\partial_tu=\Delta u$. Then the following equalities hold pointwise on $M_u$ and on $M$ in the distribution sense.

\noindent (1) $ \partial_t\left(F^2(\nabla u)\right)=2d(\Delta u)(\nabla u)$.

\noindent (2) $ \partial_t\left(F^2(\nabla \log u)\right)=2u^{-1}d(u^{-1}\Delta u)(\nabla u).$

\noindent (3) $\Delta u(t, \cdot)=P_{s,t}^{\nabla u}(\Delta u(s, \cdot))$ for all $0\leq s<t<\infty$.

\noindent (4) $u\Delta \log u=-uF^2(\nabla \log u)+\partial_t u$ for any $t>0$.
\end{lem}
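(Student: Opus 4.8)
The plan is to prove each of (1)--(4) first pointwise on the open set $M_u=\{du\neq 0\}$, where $u$ is smooth, by a direct computation resting on one Finsler-specific fact: the Euler relation $(\partial g_{ij}/\partial y^k)(y)\,y^k=0$ (equivalently, the Cartan tensor satisfies $C_{ijk}(y)y^k=0$, which holds since $g_{ij}(x,\cdot)$ is homogeneous of degree $0$), which makes $g^{ij}(\nabla u)$ behave like an honest Riemannian metric whenever one differentiates it along $\nabla u$. Then I pass to the distributional statement on all of $M$ via Lemma~\ref{lem21} and the fact recorded after it that $\Delta u=0$ (hence also $\Delta\log u=0$) almost everywhere on $M\setminus M_u$, so that every term appearing in (1)--(4) vanishes a.e.\ there.

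For (1): on $M_u$, $F^2(\nabla u)=F^{*2}(du)=g^{ij}(\nabla u)u_{x^i}u_{x^j}=du(\nabla u)$; differentiating in $t$ and using that $(\partial_t g^{ij}(\nabla u))u_{x^i}u_{x^j}=0$ by the Euler relation, one gets $\partial_t(F^2(\nabla u))=2g^{ij}(\nabla u)(\partial_t u)_{x^i}u_{x^j}=2\,d(\partial_t u)(\nabla u)=2\,d(\Delta u)(\nabla u)$; testing against $\phi\in C_0^\infty(M)$ and differentiating $\int_M\phi\,F^2(\nabla u)\,dm$ under the integral sign (legitimate by the $C^{1,\alpha}$- and $H^2_{loc}$-regularity of $u$) yields the distributional version. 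Part (2) is then algebra: on $M_u$, $\nabla\log u=u^{-1}\nabla u$ and $F^2(\nabla\log u)=u^{-2}F^2(\nabla u)$, so $\partial_t(F^2(\nabla\log u))=-2u^{-3}(\partial_t u)F^2(\nabla u)+u^{-2}\partial_t(F^2(\nabla u))$, and expanding $2u^{-1}d(u^{-1}\Delta u)(\nabla u)$ by the product rule gives exactly this expression once (1) is used; one extends to $M$ as before.

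For (3): I claim $w:=\Delta u=\partial_t u$ itself solves the linearized heat equation $\partial_tw=\Delta^{V_t}w$ in the weak sense (\ref{v-he}). Indeed, using $\int_M(\Delta u)\psi\,dm=-\int_M d\psi(\nabla u)\,dm$ with $\psi=\partial_t\phi(t,\cdot)$ and then the identity $d(\partial_t\phi)(\nabla u)=\partial_t\big(d\phi(\nabla u)\big)-d\phi(\partial_t\nabla u)$, whose first term integrates to zero in $t$ since $\phi$ has compact support, one obtains $\int\!\!\int(\Delta u)\partial_t\phi\,dm\,dt=\int\!\!\int d\phi(\partial_t\nabla u)\,dm\,dt$; and $\partial_t\nabla u=\nabla^{V_t}(\partial_t u)=\nabla^{V_t}(\Delta u)$ by the same Euler relation used in (1), so the right-hand side equals $\int\!\!\int d\phi(\nabla^{V_t}w)\,dm\,dt$, which is precisely (\ref{v-he}). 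Since $\Delta u=\partial_t u\in H_0^1(M)$ under (\ref{unif cs cons}), the datum $w(s,\cdot)=\Delta u(s,\cdot)$ lies in $H_0^1(M)$, and the uniqueness in Proposition~\ref{prop21} forces $\Delta u(t,\cdot)=P_{s,t}^{\nabla u}(\Delta u(s,\cdot))$. For (4): on $M_u$, $\Delta\log u=\mathrm{div}_m(u^{-1}\nabla u)=u^{-1}\Delta u+d(u^{-1})(\nabla u)=u^{-1}\Delta u-u^{-2}F^2(\nabla u)=u^{-1}\Delta u-F^2(\nabla\log u)$; multiplying by $u$ and substituting $\Delta u=\partial_t u$ gives the claim. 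For the distributional version, test $\Delta\log u$ against $\phi\in C_0^\infty(M)$, write $-\int_M d\phi(\nabla\log u)\,dm=-\int_M u^{-1}d\phi(\nabla u)\,dm$, use $u^{-1}d\phi=d(u^{-1}\phi)+u^{-2}\phi\,du$, and recognize $-\int_M d(u^{-1}\phi)(\nabla u)\,dm=\int_M(u^{-1}\phi)\Delta u\,dm$ (valid because $u^{-1}\phi\in H^1_c(M)$ and $\Delta u$ is absolutely continuous w.r.t.\ $m$) together with $du(\nabla u)=F^2(\nabla u)$.

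I expect the main obstacle to be not any individual computation but the careful handling of the degenerate set $M\setminus M_u$, where $\nabla u$ is only continuous and $\Delta$ is degenerate: one needs Lemma~\ref{lem21} (and that $\Delta u=0$ a.e.\ off $M_u$), the $H^2_{loc}$- and $C^{1,\alpha}$-regularity of $u$ in $(t,x)$, and the smoothness of $u$ on $\bigcup_{t>0}(\{t\}\times M_{u_t})$, both to differentiate under integral signs and to legitimize $u^{-1}\phi$ as an admissible test function. The only genuinely Finsler input, used uniformly throughout (1)--(4), is the homogeneity identity $(\partial g_{ij}/\partial y^k)(y)\,y^k=0$, which is exactly what makes $\partial_t$ and $\nabla^{V_t}$ commute with $\nabla$ along $\nabla u$ as in the Riemannian case.
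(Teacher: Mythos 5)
Your argument is correct and follows essentially the same path as the paper's proof: establish each identity pointwise on $M_u$ using the $0$-homogeneity of $g_{ij}$ (which the paper obtains by citing (4.2) of [OS2] where you re-derive it), and then extend to all of $M$ by noting that every term vanishes a.e.\ on $M\setminus M_u$ via Lemma~\ref{lem21}. The only notable variation is in (3), where you invoke the uniqueness statement in Proposition~\ref{prop21} after verifying that $\Delta u$ itself solves the linearized equation in the weak sense, whereas the paper realizes the same uniqueness by checking that $s\mapsto\int_M\hat P^{\nabla u}_{t,s}(\phi)\,\Delta u(s,\cdot)\,dm$ is constant; both rest on the same pointwise identity $\partial_s(\Delta u)=\Delta^{V_s}(\Delta u)$.
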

\begin{proof} (1) From Proposition \ref{prop21*},  the solution $u(t, \cdot)$ is smooth on $\cup_{t>0}\left(\{t\}\times M_{u_t}\right)$.  Observe that both sides of the equality are zero a.e. on $M\setminus M_u$ from Lemma \ref{lem21}. On the other hand, (1) holds on $M_u$ by (4.2) in \cite{OS2}. Thus (1) follows.

(2)  Since $d(\log u)=\frac 1u du$, we have $M_{\log u}=M_u$ and $g^{ij}(\nabla \log u )=g^{*ij}(x, d(\log u))=g^{*ij}(x, du)=g^{ij}(\nabla u )$ on $M_{u }$. Consequently, $\nabla (\log u)=\frac 1{u }\nabla u$. By (1), we have on $M_u$
\beqn \partial_t\left(F^2(\nabla \log u)\right)&=&\frac 2{u^{2}}d(\Delta u)(\nabla u)-\frac 2{u} F^2(\nabla \log u)\Delta u \\
&=& 2u^{-1}d(u^{-1}\Delta u)(\nabla u).\eeqn Similarly, both sides of the above equality are zero a.e. on $M\setminus M_u$ by Lemma \ref{lem21}. Thus (2) holds on $M$ in the distribution sense.

(3) By (\ref{unif cs cons}) and Proposition \ref{prop22}, there exists a linearized heat semigroup $\{P_{s, t}^{\nabla u}\}$ for the heat equation.  From (\ref{loc-Lap}), we obtain on $M_u$
 \beqn \partial_s(\Delta u(s, \cdot))&=&\frac 1{\sigma}\frac{\partial}{\partial x^i}\left(\sigma \partial_s\left(g^{ij}(\nabla u(s, \cdot))\right)\frac{\partial u}{\partial x^j}+\sigma g^{ij}(\nabla u(s, \cdot))\frac{\partial^2u}{\partial s\partial x^j}\right)\nonumber \\
 &=&\frac 1{\sigma}\frac{\partial}{\partial x^i}\left(-\sigma g^{ik}(\nabla u(s, \cdot)\partial_s(g_{kl}(\nabla u(s, \cdot))\nabla^{l}u+\sigma g^{ij}(\nabla u(s, \cdot))\frac{\partial^2u}{\partial s\partial x^j}\right)\nonumber \\
 &=& \Delta^{V_s}(\partial_s u(s, \cdot))=\Delta^{V_s}(\Delta u(s, \cdot)),\eeqn where $V_s=\nabla u(s, \cdot)$ and we used $\frac{\partial g_{ij}(x, y)}{\partial y^k}y^k=0$ in the third equality since $g_{ij}(x, y)$ are homogeneous functions of degree zero in $y$. Note that $\Delta u=0$ on $M\setminus M_u$ by Lemma \ref{lem21}.  Thus $\partial_s(\Delta u(s, \cdot))=\Delta^{V_s}(\Delta u(s, \cdot))$ on $M$ in the distribution sense.  For any $\phi\in C_0^\infty(M)$,  $\hat P_{t, s}^{\nabla u}(\phi)\in H_0^1(M)$ can be chosen as a test function. Moreover, $\Delta u\in H_0^1(M)$ by Proposition \ref{prop21*} since $F$ has a finite uniform smoothness constant. It follows from (\ref{AHE}) that
\beqn & &\frac {\partial}{\partial s}\int_M\hat P_{t,s}^{\nabla u}(\phi)\Delta u(s, \cdot)dm \\
 & &=-\int_M\Delta^{V_s}\hat P_{t, s}^{\nabla u}(\phi)\Delta u(s,\cdot)dm+\int_M \hat P_{t, s}^{\nabla u}(\phi)\Delta^{V_s}(\Delta u(s, \cdot))dm \\
& &= \int_M d(\Delta u(s,\cdot))(\nabla^{V_s}\hat P_{t, s}^{\nabla u}(\phi))dm-\int_Md(\hat P_{t, s}^{\nabla u}(\phi))(\nabla^{V_s}(\Delta u(s,\cdot)))dm=0,\eeqn which implies
\beqn \int_M\phi\Delta u(t, \cdot)dm=\int_M\hat P_{t,s}^{\nabla u}(\phi)\Delta u(s, \cdot)dm=\int_M\phi P_{s,t}^{\nabla u}(\Delta u(s, \cdot))dm.\eeqn Thus (3) follows.

(4) Since  $g^{ij}(\nabla \log u )=g^{ij}(\nabla u )$ and $\nabla (\log u)=\frac 1{u }\nabla u$ on $M_{u }=M_{\log u}$, we have
\beq  \Delta \log u =-F^2(\nabla \log u)+u^{-1}\Delta u\label{log-u} \eeq  on $M_u$ by (\ref{loc-Lap}). Moreover, this equality holds a.e. on $M$ since $\Delta \log u=\Delta u=0$ a.e. on $M\setminus M_u$. Thus (4) holds on $M$ in the distribution sense.
\end{proof}

For the sake of simplicity, we write $u_t$ instead of $u(t, \cdot)$ and $\partial_tu_t$ instead of $\partial_t u(t, \cdot)$ in the following to emphasize the dependence on the time $t$. If $(M, F, m)$ is a compact Finsler measure space (without boundary), then (\ref{unif cs cons}) is trivial,   $C_0^\infty(M)=C^\infty(M)$ and $H_{loc}^1(M)=H_0^1(M)=H^1(M)=H_c^1(M)$.
The following lemma plays an important role in the subsequent arguments.
\begin{lem} \label{lem42*} Let $(M, F, m)$ be a compact Finsler measure space and $u_t\in H^1(M)$ be a positive global solution to $\partial_tu=\Delta u$ (in the weak sense). Assume that Ric$_N\geq K$ for some $N\in [n, \infty]$ and $K\in \mathbb R$. For any $\sigma\in [0, t]$ and nonnegative $\phi\in C^\infty(M)$ (or $H^1(M)$), define
\beq H(\sigma): =\int_M \phi P_{\sigma, t}^{\nabla u}(u_\sigma\log u_\sigma)dm. \label{H-sigma} \eeq Then
\beq H'(\sigma)=-\int_M\phi P_{\sigma, t}^{\nabla u}\big(u_\sigma F^2(\nabla \log u_\sigma)\big)dm,\label{H-der-1}\eeq
and \beq H''(\sigma) \geq -2KH'(\sigma)+\frac 2N\int_M \hat P_{t, \sigma}^{\nabla u}(\phi) u_\sigma\big(\Delta\log u(\sigma, \cdot)\big)^2 dm,\label{H-2der}\eeq where the RHS of (\ref{H-2der}) should be understood as a limit when $N=\infty$.
\end{lem}
\begin{proof} Let $f:=u_0\in H^1(M)$. Then $u_t=P_{0, t}^{\nabla u}(f)$  by Proposition \ref{prop31}(1). Further $u_t\in H^2(M)\cap C^{1, \beta}(M)$ with $\Delta u_t\in H^1(M)$  and $F(\nabla u_t)\in H^1(M)\cap C^\beta(M)$ by Proposition \ref{prop21*}. Hence $u_t$ and $F(\nabla u_t)$ are bounded on $M$ because of the H\"older continuity, which means that $F^2(\nabla u_t)\in H^1(M)\cap C^\beta(M)$.  Since  \beq \nabla (\log u_t)=\frac{\nabla u_t}{u_t}, \ \ \ \Delta (\log u_t)=\frac{\Delta u_t}{u_t}- \frac{F^2(\nabla u_t)}{u_t^2} \label{uuuu} \eeq pointwise on $M_{u_t}=M_{\log u_t}$ and on $M$ in the distribution sense by Lemma \ref{lem41},  the functions $u_t\log u_t$, $u_tF^2(\nabla\log u_t)$, $\Delta \log u_t$  are in $H^1(M)$ and $u_t\big(\Delta \log u_t)^2\in L^1(M)$. Hence the integrals of the RHS in (\ref{H-sigma})--(\ref{H-2der}) make sense.

 Let $\psi$ be a smooth function in one variable defined by $\psi(\upsilon)=\upsilon\log \upsilon$ for any $\upsilon\in (0, \infty)$. Then  $\psi''(\upsilon)=\frac 1{\upsilon}$ and
$$H(\sigma)=\int_M \phi P_{\sigma, t}^{\nabla u}(\psi(u_\sigma))dm=\int_M \psi(u_\sigma)\hat P_{t, \sigma}^{\nabla u}(\phi)dm$$ for any $\sigma\in [0, t]$. The derivative of $H$ with respect to $\sigma$ is given by
\beq H'(\sigma)&=&\int_M\psi'(u_\sigma)(\partial_\sigma u_{\sigma})\hat P_{t, \sigma}^{\nabla u}(\phi)dm+\int_M \psi(u_\sigma)\partial_{\sigma}(\hat P_{t, \sigma}^{\nabla u}(\phi))dm\nonumber \\
 &=&-\int_M d\left(\psi'(u_\sigma) \hat P_{t, \sigma}^{\nabla u}(\phi)\right)(\nabla u_\sigma) dm+\int_M \psi'(u_\sigma)du_\sigma(\nabla^{V_\sigma}(\hat P_{t, \sigma}^{\nabla u}(\phi)))dm \nonumber \\
&=& -\int_M \psi''(u_\sigma)F^2(\nabla u_{\sigma}) \hat P_{t, \sigma}^{\nabla u}(\phi)dm=-\int_M u_\sigma F^2(\nabla \log u_\sigma)\hat P_{t, \sigma}^{\nabla u}(\phi)dm\nonumber \\
&=& -\int_M \phi P_{\sigma, t}^{\nabla u}\big(u_\sigma F^2(\nabla \log u_\sigma)\big)dm,\label{H-der-1*} \eeq where we used $d(\hat P_{t, \sigma}(\phi)(\nabla u_\sigma)=du_\sigma(\nabla^{V_\sigma}(\hat P^{\nabla u}_{t, \sigma}(\phi))$ in the third equality.  Let
 \beq G(\sigma):=\int_M \phi P_{\sigma, t}^{\nabla u}\big(u_\sigma F^2(\nabla \log u_\sigma)\big)dm=\int_M u_\sigma F^2(\nabla\log u_\sigma)\hat P_{t, \sigma}^{\nabla u}(\phi)dm.\label{G-def}\eeq Then $H'(\sigma)=-G(\sigma)$. From Lemma \ref{lem41}(2) and (\ref{AHE}), we have
\beqn G'(\sigma)&=& -\int_M d\left(F^2(\nabla \log u_\sigma) \hat P_{t, \sigma}^{\nabla u}(\phi)\right)(\nabla u_\sigma) dm +2\int_Md\left(u_\sigma^{-1}\Delta u_\sigma\right)(\nabla u_\sigma)\hat P_{t, \sigma}^{\nabla u}(\phi)dm \nonumber \\
& & +\int_M d\left(u_\sigma F^2(\nabla \log u_\sigma)\right)\left(\nabla^{V_\sigma} \hat P_{t, \sigma}^{\nabla u}(\phi)\right) dm \nonumber \\
&=&-\int_M d\left(F^2(\nabla \log u_\sigma) \right)(\nabla u_\sigma)\hat P_{t,\sigma}^{\nabla u}(\phi) dm+2\int_Md\left(u_\sigma^{-1}\Delta u_\sigma\right)(\nabla u_\sigma)\hat P_{t, \sigma}^{\nabla u}(\phi)dm \nonumber \\
& &+\int_M u_\sigma d\left( F^2(\nabla \log u_\sigma)\right)(\nabla^{V_\sigma} \hat P_{t, \sigma}^{\nabla u}(\phi)) dm.\label{H-der-2}\eeqn
 By (\ref{log-u}), we have
\beqn d\left(\Delta\log u_\sigma\right)(\nabla u_\sigma) =d\left(-F^2(\nabla \log u_{\sigma})+u_\sigma^{-1}\Delta u_\sigma\right)(\nabla u_\sigma)\label{uu} \eeqn on $M_{u_\sigma}$. Further, this equality holds on $M$ in the distribution sense.  Hence,
 \beq G'(\sigma)&=& \int_M  d \left(u_\sigma\hat P_{t, \sigma}^{\nabla u}(\phi)\right)\left(\nabla^{V_\sigma}F^2(\nabla \log u_\sigma)\right) dm
 \nonumber \\& &+ 2\int_M u_\sigma \hat P_{t, \sigma}^{\nabla u}(\phi) d\left(\Delta\log u_\sigma\right)(\nabla \log u_\sigma)dm.
\label{G-d}  \eeq
Since $\phi\in C^\infty(M)\subset H^1(M)\cap L^\infty(M)$ and $u_\sigma$ is bounded on $M$, $u_\sigma \hat P_{t, \sigma}^{\nabla u}(\phi)\in H^1(M)\cap L^\infty(M)$ by Proposition \ref{prop22} and Corollary \ref{cor31}. Moreover, $\log u_\sigma\in H^2(M)\cap C^1(M)\cap H^1(M)$ and $\Delta \log u_\sigma\in H^1(M)$ by previous arguments. Applying (\ref{Bochner4''}) in (\ref{G-d}) with the test function $u_\sigma \hat P_{t, \sigma}^{\nabla u}(\phi)$ yields
\beq H''(\sigma)&=& -G'(\sigma)\geq  2\int_M u_\sigma\hat P_{t, \sigma}^{\nabla u}(\phi)\left\{KF^2(\nabla \log u_\sigma)+\frac {(\Delta\log u_\sigma)^2}N\right\}dm\nonumber \\
&=& -2KH'(\sigma)+\frac 2N\int_M \hat P_{t, \sigma}^{\nabla u}(\phi) \left(u_\sigma(\Delta\log u_\sigma)^2\right)dm \label{HG-d}\eeq for $N\in [n, \infty)$. Thus (\ref{H-2der}) follows.
If $N=\infty$, applying (\ref{Bochner3''}) in (\ref{G-d}) with the test function $u_\sigma \hat P_{t, \sigma}^{\nabla u}(\phi)$ gives $H''(\sigma)\geq -2KH'(\sigma)$. Combining these two cases yields the conclusion. \end{proof}

Based on Lemma \ref{lem41}, we prove Theorem \ref{thm11}.
\begin{proof}[Proof of Theorem \ref{thm11}]
 We keep the notations in the proof of Lemma \ref{lem42*}. Let $G(\sigma)$ be defined by (\ref{G-def}) for any $\sigma\in [0, t]$. For any nonnegative $\phi\in C^\infty(M)$, by (\ref{H-2der}), we have
\beqn -G'(\sigma)\geq 2KG(\sigma)+\frac 2N\int_{M}\hat P_{t, \sigma}^{\nabla u}(\phi) \big(u_\sigma (\Delta \log u_\sigma)^2\big) dm.\label{GG-d-1}\eeqn
  For any continuous function $\mu=\mu(\sigma)$ on $(0, \infty)$, we have $(\Delta \log u_\sigma)^2\geq 2\mu \Delta\log u_\sigma-\mu^2$ in the distribution sense. Consequently, by Lemma \ref{lem41},
 \beqn -G'(\sigma) &\geq & 2KG(\sigma)+\frac {4\mu}N\int_{M}\hat P_{t, \sigma}^{\nabla u}(\phi)u_\sigma\Delta(\log u_\sigma)dm-\frac{2\mu^2}N\int_M \phi P_{\sigma, t}^{\nabla u} (u_\sigma) dm\\
 &=& 2KG(\sigma)+\frac {4\mu}N\int_{M}\hat P_{t, \sigma}^{\nabla u}(\phi)\left(-u_\sigma F^2(\nabla \log u_\sigma)+\Delta u_\sigma\right)dm-\frac{2\mu^2}N\int_M \phi u_t dm\\
 &=&\left(2K-\frac{4\mu}N\right)G(\sigma)+\frac {4\mu}N\int_M\phi\Delta u_tdm-\frac{2\mu^2}N\int_M\phi u_tdm.
 \label{GG-d-2}\eeqn
 For any $s\in [0, t]$, consider the function $$\mathcal G(s):=a(t-s)G(t-s).$$ Let $\sigma: =t-s$. Choose $\mu=\mu(\sigma)$ such that $(\log a)'(\sigma)-2K+4\mu/N=0$, namely, $\mu=\frac N4\left(2K-(\log a)'(\sigma)\right)$. Then
 \beqn \mathcal G'(s) &=& -a'(\sigma)G(\sigma)-a(\sigma)G'(\sigma)\\
 &\geq & a(\sigma)\left(-(\log a)'(\sigma)+2K\right)\int_M\phi \Delta u_tdm-\frac{N}8a(\sigma)\left(-(\log a)'(\sigma)+2K\right)^2\int_M\phi u_tdm. \eeqn
Integrating this inequality in $s$ from $0$ to $t$ and using $\lim\limits_{t\rightarrow 0+}a(t)=0$ yield
\beqn \int_M\phi u_tF^2(\nabla \log u_t)dm &\leq & \left(1-\frac{2K}{a(t)}\int_0^t a(s)ds\right)\int_M\phi \partial_t u_tdm  \nonumber \\
&+&\left\{\frac N{8a(t)}\int_0^t\frac{a'(s)^2}{a(s)}ds-\frac{NK}2+\frac{NK^2}{2a(t)}\int_0^ta(s)ds\right\}\int_M\phi  u_tdm.
 \eeqn
Since  $\phi$ is arbitrary, the above inequality implies that $u_tF^2(\nabla \log u_t)\leq \alpha(t) \partial_t u_t+\varphi(t)u_t$ a.e. on $M$, i.e.,
\beqn F^2(\nabla \log u_t) -\alpha(t) \partial_t (\log u_t)\leq \varphi(t)  \eeqn a.e. on $M$. Since $F(\nabla \log u_t)$ and $\partial_t (\log u_t)$ are continuous on $M$, this inequality holds on $M$.
 This finishes the proof. \end{proof}

For the complete and noncompact case, Lemma \ref{lem42*} still holds under the assumptions in Theorem \ref{thm41}. The proof indeed follows the same lines as  those of Lemma \ref{lem42*}. The key point is to check that all integrals in (\ref{H-sigma})--(\ref{H-2der}) are well defined and (\ref{Bochner3''})--(\ref{Bochner4''}) can be applied  under the assumptions on $f$ and $u$ in Theorem \ref{thm41}.  More precisely, we have
\begin{lem} \label{lem43} Let $(M, F, m)$  and $u(t, \cdot)$ be as in Theorem \ref{thm41}. Then the functions $u\log u$, $uF^2(\nabla\log u)$, $\Delta\log u$ are in $H_0^1(M)$ and $u(\Delta \log u)^2\in L^1(M)$.
Further, assume that Ric$_N\geq K$ for some $N\in [n, \infty]$ and $K\in \mathbb R$. If $H(\sigma)$ is given by (\ref{H-sigma}) for any $\sigma\in [0, t]$ and $\phi\in C_0^\infty(M)$, then we have (\ref{H-2der}), whose right--hand side should be understood as a limit when $N=\infty$.
\end{lem}

\begin{proof}  By Proposition \ref{prop22}, (\ref{unif cs cons}) ensures the existence of the linearized heat semigroup $\{P_{s, t}^{\nabla u}\}$. Since $(M, F)$ is complete and $F$ has finite reversibility by (\ref{unif cs cons}), we have $H_0^1(M)=H^1(M)$ by Lemma 11.4 in \cite{Oh1}. Hence it suffices to consider the Sobolev space $H^1(M)$. We still denote $u(t, \cdot)$ by $u_t$ as before.

 Let  $f: =u_0\in H^1(M)\cap L^\infty(M)$. Then, for any $t>0$, $u_t=P_{0, t}^{\nabla u}(f)\in H^1(M)\cap L^\infty(M)$ by Proposition \ref{prop31}(1) and Theorem \ref{thm31}. Hence $u_t\log u_t\in H^1(M)$. Moreover, by Proposition \ref{prop21*},  $u_t\in C^{1, \beta}(M)\cap H_{loc}^2(M)$ with $\Delta u_t\in H^1(M)$ since $F$ has a finite uniform smoothness constant. From this,  Lemma \ref{lem41}(4) and $F^2(\nabla u_t)\in H^1(M)$ by the assumption,  the functions $u_tF^2(\nabla\log u_t)$ and $\Delta(\log u_t)$ are in $H^1(M)$. Further, $u_t(\Delta \log u_t)^2\in L^1(M)$.
 Consequently,  all integrals of the RHS in  (\ref{H-sigma})--(\ref{H-2der}) are well defined.

By the same arguments as in the proof of Lemma \ref{lem42*}, we get (\ref{G-d}). Since $\phi\in C_0^\infty(M)\subset H_0^1(M)\cap L^\infty(M)$ (i.e., $H^1(M)\cap L^\infty(M)$),  $\hat P_{t, \sigma}^{\nabla u}(\phi)\in H^1(M)\cap L^\infty(M)$ by Proposition \ref{prop22} and Theorem \ref{thm31}. Thus $u_\sigma\hat P_{t, \sigma}^{\nabla u}(\phi)\in H^1(M)\cap L^\infty(M)$ for any $\sigma\in [0, t]$. Moreover, since $F^2(\nabla u_t)\in H^1(M)$ by the assumption, we get $F(\nabla(F^2(\nabla u_t)))\in L^2(M)$. Note that (\ref{unif cs cons}) (equivalently,  (\ref{unif cons})) implies that any two weighted Riemannian metrics are comparable and each of them is comparable to $F$. Therefore, $F(\nabla^{\nabla u}(F^2(\nabla u_t)))\in L^2(M)$.
 By previous arguments,  $\log u_t\in H^2_{loc}(M)\cap C^1(M)\cap H^1(M)$ and $\Delta \log u_t\in H^1(M)$.  From these and Remark \ref{rem21},  applying (\ref{Bochner3''}) and (\ref{Bochner4''}) respectively in (\ref{G-d}) with the test function $u_\sigma \hat P_{t, \sigma}^{\nabla u}(\phi)$ yields (\ref{H-2der}) in the cases when $N\in [n, \infty)$ and $N=\infty$, where the RHS of (\ref{H-2der}) is understood as a limit when $N=\infty$.\end{proof}

\begin{proof}[Proof of Theorem \ref{thm41}]  The proof follows from that of Theorem \ref{thm11} provided that we use Lemma \ref{lem43} instead of Lemma \ref{lem42*}. We omit it here.  \end{proof}

\section{Generalized Li-Yau's inequality}

The following lemma is elementary, whose proof can be found in \cite{Vi} (see Theorem 14.28, \cite{Vi}).
\begin{lem} \label{lem51} Let $\lambda\in \mathbb R$ and $f$ be a nonnegative $C^2$ function on $[0, t]$ such that $f''(s)+\lambda f(s)\leq 0$ on $[0, t]$.

\noindent (1) If $\lambda>\frac {\pi^2}{t^2}$, then $f(s)=0$ for all $s\in [0, t]$.

\noindent (2) If $\lambda=\frac{\pi^2}{t^2}$, then $f(s)=c\sin\left(\frac st\pi\right)$ for some $c\geq 0$.

\noindent (3) If $\lambda<\frac {\pi^2}{t^2}$, then $f(s)\geq \tau_{\lambda}(t-s)f(0)+\tau_{\lambda}(s)f(t)$, where
\begin{eqnarray*}\tau_{\lambda}(s)=\left\{\begin{array}{lll}\frac {\sin(s\sqrt{\lambda})}{\sin(t\sqrt{\lambda})}, \ & \lambda>0, \\
\frac st, \ & \lambda=0, \\
\frac {\sinh(s\sqrt{-\lambda})}{\sinh(t\sqrt{-\lambda})}, \ & \lambda<0. \end{array}\right. \label{tau} \end{eqnarray*}
 \end{lem}

Let $u_t$ be a positive global solution to $\partial_t u=\Delta u$ on a Finsler measure space $(M, F, m)$ satisfying (\ref{unif cs cons}). For any nonnegative $\phi\in C_0^\infty(M)$ such that $\phi$ is not identically zero on $M$ and $K\in \mathbb R\setminus\{0\}$, let
\beq \zeta(t): =\frac 2{N\int_M\phi u_tdm}, \ \ \ \ \ \ \  \chi(t):=\frac {4}{NK}\frac{\int_M\phi\partial_t u_tdm}{\int_M\phi u_tdm},\label{zeta-chi}\eeq  which are well defined by Proposition \ref{prop21*}.  Further, define the functions $\Psi_t(x)$ and $\tilde\Psi_t(x)$ on $(-\infty, 1+\frac{\pi^2}{K^2t^2})$ by
\beq \Psi_t(x):=\left\{\begin{array}{ll}\frac K2\left(x-2+2\sqrt{x-1}\cot(Kt\sqrt{x-1})\right), & 1< x<1+\frac{\pi^2}{K^2t^2}, \\
-\frac K2+\frac 1t, & x=1, \\
\frac K2\left(x-2+2\sqrt{1-x}\coth(Kt\sqrt{1-x})\right), & x< 1, \end{array}\right. \label{Psi-t}\eeq
and $\tilde\Psi_t(x):=\Psi_t(x)-Kx+2K$. Then $\Psi_t(x)$ is smooth and strictly concave on $(-\infty, 1+\pi^2/(K^2t^2))$ (cf. Lemma 2.8, \cite{BBG}). We first consider the compact case.

\begin{prop} \label{prop51}  Let $(M, F, m)$ be an $n$-dimensional compact Finsler measure space and $u_t$ be the positive global solution to $\partial_t u=\Delta u$ (in the weak sense) on $[0, \infty)$ with the initial $u_0:=f\in H^1(M)$. Assume that Ric$_N\geq K$ for some $N\in [n, \infty)$ and $ K\in \mathbb R\setminus\{0\}$. For any nonnegative $\phi\in C^\infty(M)$ such that $\phi$ is not identically zero on $M$, let $\zeta(t), \chi(t), \Psi_t$ and $\tilde \Psi_t$ be the functions defined by (\ref{zeta-chi}) and (\ref{Psi-t}) respectively. Then $\chi(t) < 1+\frac{\pi^2}{K^2t^2}$.
Further, we have
\beq & & \exp\left\{\zeta\int_M\phi\left(u_t\log u_t-P_{0, t}^{\nabla u}(f\log f)\right)dm+\frac {Kt}2\chi-Kt\right\}\nonumber \\
& &\leq \left\{\begin{array}{lll} \frac{\sin \left(Kt\sqrt{\chi-1}\right)}{K\sqrt{\chi-1}}\Big\{-\zeta\int_M\phi u_tF^2(\nabla\log u_t) dm +\Psi_t(\chi)\Big\}, & 1<\chi<1+\frac {\pi^2}{K^2t^2}, \\
t\Big\{-\zeta\int_M\phi u_tF^2(\nabla \log u_t)dm-\frac K2+\frac 1t\Big\}, & \chi=1,\\
\frac{\sinh \left(Kt\sqrt{1-\chi}\right)}{K\sqrt{1-\chi}}\Big\{-\zeta\int_M\phi u_tF^2(\nabla\log u_t) dm +\Psi_t(\chi)\Big\}, & \chi< 1.
\end{array}\right. \label{log-sob-ineq-1}\eeq
and
  \beq & & \exp\left\{-\zeta\int_M\phi\left(u_t\log u_t-P_{0, t}^{\nabla u}(f\log f)\right)dm -\frac {Kt}2\chi+Kt\right\}\nonumber \\
& &\leq \left\{\begin{array}{lll} \frac{\sin \left(Kt\sqrt{\chi-1}\right)}{K\sqrt{\chi-1}}\left\{\zeta\int_M fF^2(\nabla \log f)\hat P_{t, 0}^{\nabla u}(\phi)dm +\tilde \Psi_t(\chi)\right\}, & 1<\chi<1+\frac {\pi^2}{K^2t^2}, \\
t\Big\{-\zeta\int_M fF^2(\nabla \log f)\hat P_{t, 0}^{\nabla u}(\phi)dm+\frac K2+\frac 1t\Big\}, & \chi = 1,\\
\frac{\sinh \left(Kt\sqrt{1-\chi}\right)}{K\sqrt{1-\chi}}\left\{\zeta\int_M fF^2(\nabla \log f)\hat P_{t, 0}^{\nabla u}(\phi) dm +\tilde \Psi_t(\chi)\right\}, & \chi>1,\end{array}\right. \label{log-sob-ineq-2}\eeq where $\zeta=\zeta(t)$ and $\chi=\chi(t)$.
 \end{prop}
\begin{proof}  Since $M$ is compact,  we have $C_0^\infty(M)=C^\infty(M)$ and $H_0^1(M)=H^1(M)$. Moreover, $u_t=P_{0, t}^{\nabla u}(f)\in H^1(M)$ for any $t>0$ as before. For any $\sigma\in [0, t]$ and nonnegative $\phi\in C^\infty(M)\subset H^1(M)\cap L^\infty(M)$ such that $\phi$ is not identically zero on $M$, consider the function
\beqn H(\sigma)=\int_M \phi P_{\sigma, t}^{\nabla u}(u_{\sigma}\log u_{\sigma})dm  \label{H-def} \eeqn  as in Lemma \ref{lem42*}.
 From Lemma \ref{lem42*}, we have (\ref{H-2der}). Next we estimate the second term on the RHS of (\ref{H-2der}) in a different way from that in the proof of Theorem \ref{thm11}.  Note that $0\leq \hat P_{t, \sigma}^{\nabla u}(\phi)\in H^1(M)\cap L^\infty(M)$ by Corollary \ref{cor31}.  By Lemma \ref{lem41} and Cauchy-Schwarz's inequality, we have
\beq & &\left(\int_M\hat P_{t, \sigma}^{\nabla u}(\phi)\big(\Delta u_\sigma-u_\sigma F^2(\nabla \log u_{\sigma})\big)dm\right)^2\nonumber \\
& &=\left(\int_M \big(u_\sigma\Delta\log u_{\sigma}\big)\hat P_{t, \sigma}^{\nabla u}(\phi)dm\right)^2\nonumber \\
& &\leq \int_M u_\sigma\hat P_{t, \sigma}^{\nabla u}(\phi) dm\cdot \int_M\hat P_{t, \sigma}^{\nabla u}(\phi) \left(u_\sigma(\Delta \log u_\sigma)^2\right)dm. \label{ppp}\eeq
Since $$\int_M u_\sigma\hat P_{t, \sigma}^{\nabla u}(\phi) dm=\int_M \phi  P_{\sigma, t}^{\nabla u}(u_\sigma)dm=\int_M \phi u_tdm$$ by Proposition \ref{prop31},  it follows from (\ref{ppp}) and Lemma \ref{lem41} that
\beq  \int_M\hat P_{t, \sigma}^{\nabla u}(\phi) \left(u_\sigma(\Delta \log u_\sigma)^2\right)dm &\geq &\frac N2\zeta(t) \left(\int_M\hat P_{t, \sigma}^{\nabla u}(\phi)\big(\Delta u_\sigma-u_\sigma F^2(\nabla \log u_{\sigma})\big)dm\right)^2\nonumber \\
&=& \frac N2\zeta(t) \left(\int_M\big(\phi P_{\sigma, t}^{\nabla u}(\Delta u_\sigma)+H'(\sigma)\big)dm\right)^2\nonumber \\
&=& \frac N2\zeta(t) \left(\int_M\big(\phi \Delta u_t+H'(\sigma)\big)dm\right)^2.\label{ppp*}\eeq
Let $\sigma:=t-s$ for $s\in [0, t]$. From  (\ref{H-2der}), (\ref{ppp*}) and  $\partial_s(H(t-s))=-H'(\sigma)$, we have
\beq \partial_s^2\left(H(t-s)\right)\geq 2K\partial_s(H(t-s))+\zeta(t)\left(\int_M \big\{\phi\Delta u_tdm-\partial_s(H(t-s))\big\}dm\right)^2.\label{HH-eq}\eeq
 Set \beqn & &\vartheta(t) :=\int_M\phi\Delta u_tdm-\frac{K}{\zeta(t)}=\int_M\phi \partial_tu_tdm -\frac{K}{\zeta(t)}, \\
   & &\nu(t) :=\frac {2\vartheta K}{\zeta(t)}+\frac{K^2}{\zeta^2(t)}. \eeqn  In the following, we simply write $\zeta, \vartheta$ and $\nu$ instead of $\zeta(t)$, $\vartheta(t)$ and $\nu(t)$ respectively. Thus (\ref{HH-eq}) can be rewritten as
\beqn \partial_s^2(H(t-s)) \geq  \zeta\left\{ \left(\partial_s(H(t-s))-\vartheta\right)^2 +\nu \right\}.\label{HH-eq*}
\eeqn
Let $h(s):=\exp\left\{-\zeta\left(H(t-s)-\vartheta s\right)\right\}$ for any $s\in [0, t]$. The above inequality is reduced to
\beq h''(s)+\zeta^2\nu\  h(s)\leq 0.\label{h-der-2}\eeq
Since $h>0$, both cases (1) and (2) in Lemma \ref{lem51} cannot happen. This means that the case (3) in Lemma \ref{lem51} must occur, i.e.,  $$\zeta^2\nu=K^2(\chi-1)<\frac{\pi^2}{t^2},$$ equivalently, $\chi<1+\pi^2/(t^2 K^2)$. In this case, we have
\beq h(s)\geq \tau_{\zeta^2\nu}(t-s)h(0)+\tau_{\zeta^2\nu}(s) h(t).\label{ff}\eeq Observe that the above inequality becomes equality when $s=0$ since $\tau_{\zeta^2\nu}(t-s)|_{s=0}=1$ and $\tau_{\zeta^2\nu}(0)=0$. Let
$$\bar h(s):=h(s)-\tau_{\zeta^2\nu}(t-s)h(0)-\tau_{\zeta^2\nu}(s)h(t).$$ Then $\bar h(s)\geq 0$ and $\bar h(0)=0$. This means $\bar h'(0)\geq 0$. Consequently,
\beq h'(0)\geq -\tau_{\zeta^2\nu}'(t)h(0)+\tau_{\zeta^2\nu}'(0)h(t). \label{fff}\eeq
 Note that $h(t)=\exp\left\{-\zeta\int_M\phi P_{0,t}^{\nabla u}(f\log f)dm+\zeta\vartheta t\right\}$ and
 $$h(0)=\exp\left\{-\zeta\int_M\phi u_t\log u_tdm\right\}, \ \ h'(0)=-\zeta\left(\int_M\phi u_tF^2(\nabla \log u_t)dm-\vartheta\right)h(0).$$

{\it Case 1}.  When $1<\chi<1+\pi^2/(t^2 K^2)$, i.e., $\zeta^2\nu>0$,  by (\ref{fff}), we have
 \beq & & \exp\left\{\zeta\int_M\phi\left(u_t\log u_t-P_{0, t}^{\nabla u}(f\log f)\right)dm+\frac {Kt}2\chi-Kt\right\}\nonumber \\
 & & \leq \frac{\sin \left(Kt\sqrt{\chi-1}\right)}{K\sqrt{\chi-1}}\left\{-\zeta\int_M\phi u_tF^2(\nabla\log u_t) dm +\Psi_t(\chi)\right\}. \label{exp-1}\eeq

{\it Case 2}. When $\chi<1$, i.e., $\zeta^2\nu<0$, by (\ref{fff}), the left side of (\ref{exp-1}) is not greater than
 \beq  \frac{\sinh \left(Kt\sqrt{1-\chi}\right)}{K\sqrt{1-\chi}}\left\{-\zeta\int_M\phi u_tF^2(\nabla\log u_t) dm +\Psi_t(\chi)\right\}. \label{exp-2}\eeq

{\it Case 3}.  When $\chi=1$, i.e., $\zeta^2\nu=0$,  by (\ref{fff}), the left side of (\ref{exp-1}) is not greater than
 \beqn  -t\zeta\int_M\phi u_tF^2(\nabla \log u_t)dm-\frac{Kt}2+1, \eeqn
which is the limit of the RHS of (\ref{exp-1}) or (\ref{exp-2}) as $\chi\rightarrow 1$ since $$\lim\limits_{\chi\rightarrow 1}\frac {\sin(Kt\sqrt{\chi-1})}{Kt\sqrt{\chi-1}}=\lim\limits_{\chi\rightarrow 1}\frac {\sinh(Kt\sqrt{1-\chi})}{Kt\sqrt{1-\chi}}=1.$$ Combining the above three cases together gives (\ref{log-sob-ineq-1}).
Similarly, since $\bar h(s)\geq 0$ by (\ref{ff}) for $s\in [0, t]$ and $\bar h(t)=0$, we have $\bar h'(t)\leq 0$, i.e.,
\beq h'(t)\leq -\tau_{\zeta^2\nu}'(0)h(0)+\tau_{\zeta^2\nu}'(t)h(t),\label{fff*} \eeq which implies (\ref{log-sob-ineq-2}).
 \end{proof}

For the case when Ric$_N\geq 0$, i.e., $K=0$, we have the following result.
\begin{prop} \label{prop52} Let $(M, F, m)$ and $u_t$ be the same as in Proposition \ref{prop51}. Assume that Ric$_N\geq 0$ for some $N\in[n, \infty)$. Then, for any $0\leq s<t<\infty$ and $0\leq \phi\in C^\infty(M)$ such that $\phi$ is not identically zero, the following inequalities hold.
\beq & &\exp\left(\zeta\int_M\phi\left\{u_t\log u_t-P_{s, t}^{\nabla u}(u_s\log u_s)+(t-s)\Delta u_t\right\}dm\right)\nonumber \\
& & \leq 1+(t-s)\zeta\int_M\phi u_t\Delta(\log u_t)dm, \label{exp-uu1} \eeq
\beq & & \exp\left(-\zeta\int_M\phi\left\{u_t\log u_t-P_{s, t}^{\nabla u}(u_s\log u_s)+(t-s)\Delta u_t\right\}dm\right)\nonumber \\
& & \leq 1-(t-s)\zeta\left(\int_M  (u_s\Delta \log u_s)\hat P_{t, s}^{\nabla u}(\phi)\right)dm, \label{exp-uu2}\eeq
and
 \beq  \int_M \phi u_t\Delta \log u_t dm \geq \int_M (u_s\Delta \log u_s)\hat P_{t, s}^{\nabla u}(\phi)dm\left\{1+ \zeta(t-s)\int_M\phi u_t\Delta \log u_t dm\right\}, \label{exp-uu3}\eeq where $\zeta=\zeta(t)$ defined by (\ref{zeta-chi})$_1$. \end{prop}
\begin{proof}   From the proof of Proposition \ref{prop51}, we have (\ref{h-der-2}). Since $K=0$, (\ref{h-der-2}) is reduced to $h''(s)\leq 0$. Hence, for any $s\in[0, t)$ and $\tau\in [0, t-s]$, we have $h'(\tau)\leq h'(0)$. Integrating this with respect to $\tau$ from $0$ to $t-s$ yields
\beq h(t-s)\leq h(0)+h'(0)(t-s),\label{hhh-1}\eeq  which implies (\ref{exp-uu1}).
Similarly, since $h''(s)\leq 0$ for any $s\in[0, t)$, we have $h'(\tau)\geq h'(t-s)$ for any $\tau\in [0, t-s]$. Integrating this with respect to $\tau$ from $0$ to $t-s$ yields
\beq h(0)\leq h(t-s)-(t-s)h'(t-s),\label{hhh-2} \eeq  which implies (\ref{exp-uu2}).
Combining (\ref{exp-uu1}) with (\ref{exp-uu2}) together yields (\ref{exp-uu3}).
\end{proof}

\begin{remark} \label{rem51} {\rm (1) For the sake of simplicity, we take $s=0$ in Proposition \ref{prop51}. In fact, Proposition \ref{prop51} is true for any $0\leq s<t<\infty$, in which we use $P_{s, t}^{\nabla u}$ and $t-s$ instead of $P_{0, t}^{\nabla u}$ and $t$ respectively as in Proposition \ref{prop52}.

(2) It is easy to see that $\Psi_t(\chi)$ and the right-hand sides of (\ref{log-sob-ineq-1})-(\ref{log-sob-ineq-2}) are continuous at $\chi=1$. Moreover, (\ref{exp-uu1}) and (\ref{exp-uu2}) with $s=0$ are just the limits of (\ref{log-sob-ineq-1}) and (\ref{log-sob-ineq-2}) respectively as $K$ goes to zero.
}\end{remark}

Based on Propositions \ref{prop51}-\ref{prop52}, we obtain the following generalized Li-Yau's inequality.

\begin{thm} \label{thm51}  Let $(M, F, m)$ be an $n$-dimensional compact Finsler measure space and $u(t, \cdot)\in H^1(M)$ be a positive global solution to $\partial_t u=\Delta u$ (in the weak sense) on $[0, \infty)$. Assume that Ric$_N\geq K$ for some $N\in [n, \infty)$ and $ K\in \mathbb R$. For any nonnegative $\phi\in C^\infty(M)$ such that $\phi$ is not identically zero on $M$, let $\chi(t)$ and $\Psi_t$ be the functions defined by (\ref{zeta-chi})$_2$ and (\ref{Psi-t}) respectively.

(1) If $K\neq 0$, then $\chi(t)<1+\frac{\pi^2}{K^2t^2}$ and \beq \frac 4{NK}\partial_t(\log u)<1+\frac {\pi^2}{K^2t^2}\label{u-t-der} \eeq on $M$ for all $t>0$. Moreover,
\beq \int_M \phi u F^2(\nabla\log u)dm\leq \frac N2\Psi_t(\chi) \int_M \phi udm. \label{imp-Li-Yau-ineq}  \eeq

(2) If $K=0$, it holds that
\beq F^2(\nabla(\log u))-\partial_t(\log u) \leq \frac N{2t}\label{LY-0}\eeq for all $t>0$.
\end{thm}

\begin{proof} When $K\neq 0$, the upper bound estimate of $\chi(t)$ follows from Proposition \ref{prop51}. We write $u_t$ and $\partial_tu_t$ instead of $u(t, \cdot)$ and $\partial_t u(t, \cdot)$ as before. By definition, we have
$$\frac 4{NK}\int_M\phi\partial_t u_tdm=\chi(t)\int_M\phi u_tdm<\left(1+\frac {\pi^2}{K^2t^2}\right)\int_M\phi u_tdm.$$ Since $\phi$ is arbitrary, the second inequality means that
$\frac 4{NK}\partial_t u_t<\left(1+\frac {\pi^2}{K^2t^2}\right)u_t$  a.e. on $M$. Note that $\partial_tu_t$ and $u_t$ are continuous by Proposition \ref{prop21*}. We obtain (\ref{u-t-der}) on $M$. Since the LHS of (\ref{log-sob-ineq-1}) is positive, its RHS must be nonnegative, i.e., (\ref{imp-Li-Yau-ineq}) holds.

When $K=0$, it follows from Proposition \ref{prop52} that
$1+(t-s)\zeta\int_M\phi u_t\Delta(\log u_t)dm\geq 0.$ From this and Lemma \ref{lem41}, one obtains
\beqn \int_M\phi \left(\partial_t u_t-u_tF^2(\nabla \log u_t)\right)dm\geq -\frac {N}{2(t-s)}\int_M\phi u_t dm\eeqn for any nonnegative $\phi\in C^\infty(M)$. This implies that
$F^2(\nabla\log u_t)-\partial_t(\log u_t)\leq \frac {N}{2(t-s)} $ a.e. on $M$ for any $0\leq s<t<\infty$. Since $F(\nabla \log u_t)$ and $\partial_t(\log u_t)$ are continuous, we have $F^2(\nabla\log u_t)-\partial_t(\log u_t)\leq \frac {N}{2(t-s)}$ on $M$, which gives (\ref{LY-0}) by letting $s=0$. \end{proof}

 Similarly, under the same assumptions as in Theorem \ref{thm41},  we still have the conclusions of Propositions \ref{prop51}-\ref{prop52} and Theorem \ref{thm51}. Their proofs are similar provided that we use Lemma \ref{lem43} instead of Lemma \ref{lem42*} (cf. the arguments in Section 4). We omit the statements and proofs of the corresponding results.

 \bigskip

{\it Proof of Theorem \ref{thm12}}. We denote $u(t, \cdot)$ by $u_t$ as before. Note that $\Psi_t(x)$ is smooth and strictly concave on $\left(-\infty, 1+\pi^2/(K^2t^2)\right)$ (cf. Lemma 2.8, \cite{BBG}). Moreover, $$\lim\limits_{x\rightarrow -\infty}\Psi_t(x)=-\lim\limits_{x\rightarrow 1+\frac{\pi^2}{K^2t^2}}\Psi_t(x)=+\infty, \ \ \ \ \ \Psi_t(1)=\frac 1t+\frac {K}2>0, $$  which means that
  $\Psi_t$ admits exactly one root $\chi_0\in (1, 1+\frac{\pi^2}{K^2t^2})$ for any $t>0$.  Note that we should use $-K(K>0)$ instead of the lower bound $K$ of Ric$_N$ in Theorem \ref{thm51} from the assumption. Since the LHS of (\ref{imp-Li-Yau-ineq}) is nonnegative and $\int_M\phi u_tdm>0$, we have $\Psi_t(\chi)\geq 0$, which implies that $\chi\leq \chi_0$, i.e.,
   \beqn -\frac 4{NK}\int_M\phi\partial_t u_tdm\leq \chi_0\int_M\phi u_tdm. \eeqn Since $\phi$ is arbitrary, the above inequality means that $-\frac 4{NK}\partial_tu_t\leq \chi_0 u_t$ a.e. on $M$. By the continuity of $u_t$ and $\partial_tu_t$, we have $\partial_t(\log u_t)\geq -\frac{NK}4\chi_0$ on $M$.
  Further, following the elementary arguments as in \S 5.2 of \cite{BBG}, we have
\beq \Psi_t(x)\leq -\frac {\alpha K}2 x+\frac{\alpha^2}t+\frac{K\alpha^2}{2(\alpha-1)} \label{psi-K}\eeq for any constant $\alpha>1, t>0$ and $x<1+\pi^2/(t^2K^2)$.  From (\ref{imp-Li-Yau-ineq}) and (\ref{psi-K}), one obtains that
\beqn \int_M\phi u_tF^2(\nabla \log u_t)dm&\leq &\frac N 2\left\{-\frac {\alpha K}2\chi+\frac{\alpha^2}t+\frac{K\alpha^2}{2(\alpha-1)}\right\}\int_M\phi u_tdm\nonumber \\
 &=& \alpha\int_M\phi u_t\left(\partial_t(\log u_t)+\frac{N\alpha}{2t}+\frac{NK\alpha}{4(\alpha-1)}\right)dm, \eeqn
where we used $-K$ instead of $K$ in the definition of $\chi(t)$.  Since $\phi$ is arbitrary and $u_t>0$, the above inequality means that
\beqn  F^2(\nabla \log u_t) \leq \alpha\partial_t(\log u_t)+\frac{N\alpha^2}{2t}+\frac{NK\alpha^2}{4(\alpha-1)} \eeqn a.e. on $M$.  By continuity, the above inequality holds on $M$.\hfill$\Box$

\bigskip
In the case of positive curvature, i.e., Ric$_N\geq K>0$, observe that
\beqn \lim\limits_{x\rightarrow -\infty}\Psi_t(x)=\lim\limits_{x\rightarrow 1+\frac{\pi^2}{K^2t^2}}\Psi_t(x)=-\infty, \ \ \ \ \Psi_t(0)>0, \ \ \ \ \Psi_t(1)=\frac 1t-\frac K2<0\eeqn for any $t\geq 2/K$, which means that
  $\Psi_t$ admits exactly two roots $-\infty<\chi_1<0<\chi_2<1$ by the concavity of $\Psi_t$. Since $\Psi_t(\chi)\geq 0$ by (\ref{imp-Li-Yau-ineq}), we have
\beqn \chi_1\leq \chi\leq \chi_2<1\eeqn for any $t\geq 2/K$. There are quantitative estimates for $\chi_1$ and $\chi_2$ by Lemma 4.2 in \cite{BBG}. From this and the proof of Corollary 4.4 in \cite{BBG}, we have
$$\Psi_t(\chi)\leq 3K{\rm e}^{2-2Kt}$$ if $t\geq \frac 6K$. Thus, by (\ref{imp-Li-Yau-ineq}), one obtains
\beqn \int_M\phi u_tF^2(\nabla \log u_t)dm\leq \frac 32 NK {\rm e}^{2-2Kt}\int_M\phi u_tdm.\eeqn Thus $u_tF^2(\nabla \log u_t)\leq\frac 32 NK {\rm e}^{2-2Kt}u_t$ a.e. on $M$. By continuity and $u_t>0$, one obtains the following result.
\begin{cor}  \label{cor51} Let $(M, F, m)$ and $u(t, \cdot)$ be the same as in Theorem \ref{thm11} or Theorem \ref{thm41}. Assume that Ric$_N\geq K>0$.  Then there are real numbers $\chi_1<0<\chi_2<1$ such that
\beqn  \frac 14 {NK}\chi_1\leq\partial_t(\log u)\leq \frac 14 {NK}\chi_2 \eeqn for any $t\geq 2/K$, and
\beq F^2(\nabla \log u)\leq \frac 32NK{\rm e}^{2-2Kt} \label{DE**}\eeq for any $t\geq \frac 6K$. In particular, $F^2(\nabla \log u)$ is bounded when $t\geq \frac 6K$. \end{cor}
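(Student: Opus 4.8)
The plan is to deduce both assertions from the generalized Li--Yau inequality (\ref{imp-Li-Yau-ineq}) of Theorem \ref{thm12} by analysing the function $\Psi_t$ of (\ref{Psi-t}) when $K>0$, in the spirit of \S 4 of \cite{BBG}. First I would pin down the qualitative shape of $\Psi_t$ on its domain $\bigl(-\infty,\,1+\pi^2/(K^2t^2)\bigr)$. By Lemma~2.8 of \cite{BBG}, $\Psi_t$ is smooth and strictly concave there. A direct computation of the boundary behaviour gives $\Psi_t(x)\to-\infty$ as $x\to-\infty$ (the term $\tfrac K2 x$ dominates, while $\coth\bigl(Kt\sqrt{1-x}\bigr)\to 1$) and as $x\uparrow 1+\pi^2/(K^2t^2)$ (there $\cot\bigl(Kt\sqrt{x-1}\bigr)\to-\infty$ and $K>0$); evaluating at two interior points gives $\Psi_t(0)=K\bigl(\coth(Kt)-1\bigr)>0$ and $\Psi_t(1)=\tfrac1t-\tfrac K2$, hence $\Psi_t(1)\le 0$ whenever $t\ge 2/K$ (and $<0$ for $t>2/K$). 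Strict concavity then forces $\Psi_t$ to have exactly two zeros $\chi_1<0<\chi_2\le 1<1+\pi^2/(K^2t^2)$ with $\{\Psi_t>0\}=(\chi_1,\chi_2)$.

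For the first assertion, I would use that the left-hand side of (\ref{imp-Li-Yau-ineq}) is nonnegative, so that $\Psi_t\bigl(\tfrac{4}{NK}\partial_t(\log u)\bigr)\ge 0$, while (\ref{u-t-der}) places $\tfrac{4}{NK}\partial_t(\log u)$ inside the domain of $\Psi_t$. By the previous paragraph this forces $\tfrac{4}{NK}\partial_t(\log u)\in[\chi_1,\chi_2]$, and multiplying through by $NK/4>0$ yields $\tfrac14 NK\,\chi_1\le\partial_t(\log u)\le\tfrac14 NK\,\chi_2$ for all $t\ge 2/K$. Continuity of $\partial_t(\log u)$, already established in the proof of Theorem \ref{thm12}, makes this a pointwise statement.

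For the second assertion I would invoke the quantitative localisation of the roots $\chi_1,\chi_2$ from Lemma~4.2 of \cite{BBG} and then follow the estimate in the proof of Corollary~4.4 of \cite{BBG}: together with $\tfrac{4}{NK}\partial_t(\log u)\in[\chi_1,\chi_2]$ this gives $\Psi_t\bigl(\tfrac{4}{NK}\partial_t(\log u)\bigr)\le 3Ke^{2-2Kt}$ for $t\ge 6/K$, and plugging into (\ref{imp-Li-Yau-ineq}) produces $F^2(\nabla\log u)\le\tfrac32 NKe^{2-2Kt}$, which is (\ref{DE**}); the boundedness of $F^2(\nabla\log u_t)$ for $t\ge 6/K$ is then immediate. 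The one genuinely delicate step is this last one: obtaining sharp enough control on the location of $\chi_1$ and $\chi_2$ (equivalently, on where $\Psi_t$ is small) and bounding $\sup_{[\chi_1,\chi_2]}\Psi_t$ by $3Ke^{2-2Kt}$ requires the careful elementary analysis of the $\cot$ and $\coth$ terms carried out in \cite{BBG}; every other step is an immediate consequence of Theorem \ref{thm12} and the concavity of $\Psi_t$.
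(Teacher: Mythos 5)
Your proposal is correct and follows essentially the same route as the paper: analyse the shape of the strictly concave function $\Psi_t$ (boundary limits $-\infty$, $\Psi_t(0)>0$, $\Psi_t(1)=\tfrac1t-\tfrac K2\le 0$ for $t\ge 2/K$) to locate two roots $\chi_1<0<\chi_2\le 1$, deduce $\tfrac4{NK}\partial_t(\log u)\in[\chi_1,\chi_2]$ from the nonnegativity of the left side of (\ref{imp-Li-Yau-ineq}), and then invoke Lemma 4.2 and the proof of Corollary 4.4 of \cite{BBG} for the quantitative bound $\Psi_t\le 3Ke^{2-2Kt}$ when $t\ge 6/K$. You are in fact slightly more careful than the paper at the endpoint $t=2/K$, where $\Psi_t(1)=0$ (so $\chi_2=1$ rather than $\chi_2<1$); otherwise the two arguments coincide.
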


\section{Equivalent characterizations of Ric$_\infty\geq K$}

In this section, we give several equivalent characterizations of Ric$_\infty\geq K$ by means of the linearized heat semigroup. As an application, we obtain an (exponential) decay or growth of the Lipschitz constant along the heat flow depending on the sign of $K$.
Let us first recall Ohta's characterizations of Ric$_\infty\geq K$ in \cite{Oh1} or \cite{Oh2}.
\begin{prop} [\cite{Oh2}] \label{prop61} Let $(M, F, m)$ be an $n$-dimensional Finsler measure space.
Then, for each $K\in \mathbb R$, the following statements are equivalent.

(1) Ric$_\infty\geq K$.

(2) The improved Bochner inequality
$$\Delta^{\nabla h}\left(\frac{F^2(\nabla h)}2\right)-d(\Delta h)(\nabla h)\geq KF^2(\nabla h)+d(F(\nabla h))(\nabla^{\nabla h}(F(\nabla h)))$$ holds on $M_h$ for all $h\in C^\infty(M)$.

(3) The Bochner inequality

\beqn \Delta^{\nabla h}\left(\frac{F^2(\nabla h)}2\right)-d(\Delta h)(\nabla h)\geq KF^2(\nabla h) \label{BF-inf}\eeqn  holds on $M_h$ for all $h\in C^\infty(M)$.
\end{prop}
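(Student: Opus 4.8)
The plan is to prove the cycle of implications $(1)\Rightarrow(2)\Rightarrow(3)\Rightarrow(1)$, with the pointwise Bochner--Weitzenb\"ock identity \eqref{eq21-1} as the common engine. Recall that on $M_h$ it reads
$$\Delta^{\nabla h}\!\left(\tfrac{F^2(\nabla h)}2\right)-d(\Delta h)(\nabla h)=\mathrm{Ric}_\infty(\nabla h)+\|\nabla^2h\|^2_{HS(\nabla h)},$$
so all three statements concern the sign of $\mathrm{Ric}_\infty(\nabla h)+\|\nabla^2h\|^2_{HS(\nabla h)}-KF^2(\nabla h)$, taken with or without the Hessian term and along gradients of arbitrary $C^\infty$ test functions.

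For $(1)\Rightarrow(2)$: by hypothesis $\mathrm{Ric}_\infty(\nabla h)\ge KF^2(\nabla h)$, so it is enough to establish the refined (Kato) bound $\|\nabla^2h\|^2_{HS(\nabla h)}\ge d(F(\nabla h))(\nabla^{\nabla h}(F(\nabla h)))$ on $M_h$. Working in the Riemannian metric $g_{\nabla h}$, the right-hand side equals $\|\nabla^{\nabla h}(F(\nabla h))\|^2_{g_{\nabla h}}$, and on $M_h$ one has the identity $d(F(\nabla h))=F(\nabla h)^{-1}\,\nabla^2h(\nabla h,\cdot)$ --- the Cartan-tensor corrections that appear in $\nabla^{\nabla h}\!\big(F^2(\nabla h)/2\big)$ cancel once contracted against $\nabla h$, by the Legendre-transform (Gauss-lemma) identity. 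Hence $\|\nabla^{\nabla h}(F(\nabla h))\|^2_{g_{\nabla h}}=\big\|\nabla^2h\big(\tfrac{\nabla h}{F(\nabla h)},\,\cdot\,\big)\big\|^2_{g_{\nabla h}}\le\|\nabla^2h\|^2_{HS(\nabla h)}$ by Cauchy--Schwarz (the operator norm applied to a unit vector is at most the Hilbert--Schmidt norm), which is precisely (2). The implication $(2)\Rightarrow(3)$ is immediate, since the dropped term $d(F(\nabla h))(\nabla^{\nabla h}(F(\nabla h)))=\|\nabla^{\nabla h}(F(\nabla h))\|^2_{g_{\nabla h}}$ is nonnegative.

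The substantive direction is $(3)\Rightarrow(1)$, and the idea is to feed the Bochner inequality \eqref{BF-inf} a test function whose Hessian is killed at a prescribed point. Fix $x\in M$; by the $2$-homogeneity of $\mathrm{Ric}_\infty$ and of $F^2$ it suffices to treat a unit vector $v\in T_xM$. Pick $f\in C^\infty(M)$ with $df(x)=g_v(v,\cdot)$, so that $\nabla f(x)=v$ and $x\in M_f$; its Finsler Hessian $B:=\nabla^2f(x)$ (reference vector $v$) is a symmetric $(0,2)$-tensor at $x$. In a coordinate chart centered at $x$ with coordinates $(y^i)$, put $q:=\tfrac12B_{ij}(x)\,y^iy^j$, multiply $q$ by a bump function that is $\equiv1$ near $x$, and set $h:=f-q\in C^\infty(M)$. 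Near $x$ we have $dq(x)=0$, hence $\nabla h(x)=\nabla f(x)=v$; and since $\partial_i\partial_jq(x)=B_{ij}(x)$ while the connection terms drop out (again because $dq(x)=0$), we get $\nabla^2q(x)=B$, so $\nabla^2h(x)=B-B=0$ and thus $\|\nabla^2h\|^2_{HS(\nabla h)}(x)=0$. Since $x\in M_h$, evaluating \eqref{eq21-1} at $x$ and using \eqref{BF-inf},
$$\mathrm{Ric}_\infty(v)=\mathrm{Ric}_\infty(\nabla h)(x)=\Delta^{\nabla h}\!\left(\tfrac{F^2(\nabla h)}2\right)(x)-d(\Delta h)(\nabla h)(x)\ \ge\ KF^2(\nabla h)(x)=KF^2(v).$$
As $v$ ranges over all unit vectors and both sides are $2$-homogeneous, $\mathrm{Ric}_\infty\ge K$, which closes the cycle.

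I expect the main obstacle to be the construction in $(3)\Rightarrow(1)$: exhibiting a genuinely global smooth $h$ with $\nabla h(x)=v$ and $\nabla^2h(x)=0$ while ensuring the pointwise formula \eqref{eq21-1} is legitimate there --- both handled by the quadratic correction plus cutoff together with $x\in M_h$. A secondary technical point is the contraction identity $d(F(\nabla h))=F(\nabla h)^{-1}\nabla^2h(\nabla h,\cdot)$ on $M_h$ used in $(1)\Rightarrow(2)$, which is exactly where one must verify that the non-Riemannian (Cartan) terms cancel.
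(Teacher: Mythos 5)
Your proof is correct, and it is genuinely more self-contained than what the paper supplies: the paper simply outsources $(1)\Rightarrow(2)$ to Proposition 3.5 of \cite{Oh3} and $(3)\Rightarrow(1)$ to the proof of Theorem 3.9(II)$\Rightarrow$(I) of \cite{Oh3}, whereas you close the cycle from the pointwise Bochner--Weitzenb\"ock identity \eqref{eq21-1}. Your $(1)\Rightarrow(2)$ correctly reduces to a Kato-type bound and your $(3)\Rightarrow(1)$ correctly kills the Hessian at a prescribed point by subtracting a cut-off quadratic; I verified that with $dq(x)=0$ the reference vector at $x$ for both $f$ and $h=f-q$ is the same $v$, so that $(\nabla^2h)_{ij}(x)=\partial_i\partial_jf(x)-B_{ij}-\Gamma^k_{ij}(x,v)\partial_kf(x)=0$ as claimed, and then \eqref{eq21-1} plus \eqref{BF-inf} gives $\mathrm{Ric}_\infty(v)\ge KF^2(v)$ by $2$-homogeneity. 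Two small points of care are worth spelling out. First, in $(1)\Rightarrow(2)$ the identity $d\bigl(F^2(\nabla h)/2\bigr)=\nabla^2h(\nabla h,\cdot)$ on $M_h$ is indeed correct, but the ``Cartan corrections cancel'' remark should be pinned down: it rests on the Euler identity for the (dual) Cartan tensor, $\tfrac{\partial g^{*ij}}{\partial\xi_l}\,\xi_i\xi_j=0$ at $\xi=dh$, together with the total symmetry of the Cartan tensor; without that explicit invocation the reader cannot check the step. Second, the phrase ``$\nabla^2q(x)=B$'' is imprecise because $\nabla q(x)=0$, so $\nabla^2q$ has no canonical reference vector at $x$; what you really mean (and use) is the coordinate Hessian of $q$ together with the fact that the reference vector for $\nabla^2h$ at $x$ is $v$. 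With those two clarifications your argument stands on its own, which is an improvement in transparency over the paper's citation-only treatment.
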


In fact, (1)$\Rightarrow$ (2) follows from Proposition 3.5 in \cite{Oh2}. It is obvious that (2)$\Rightarrow$ (3). (3)$\Rightarrow $ (1) follows from the same proof of (II)$\Rightarrow $(I) of Theorem 3.9 in \cite{Oh2}. Next we give further characterizations of Ric$_\infty\geq K$ from the viewpoint of functional inequalities. We first consider the compact case as before.

\begin{thm} \label{thm61} Let $(M, F, m)$ be an $n$-dimensional compact Finsler measure space and $u_t$ be a positive global solution to $\partial_t u=\Delta u$ (in the weak sense) on $[0, \infty)\times M$ with $u_0: =f\in C^2(M)$. Then, for any $0\leq s<t<\infty$ and $K\in \mathbb R$, the following statements are equivalent.

(1) Ric$_\infty\geq K$.

(2) It holds on $M$ that
\beq \frac {e^{-2K(t-s)}-1}{2K}P_{s, t}^{\nabla u}\left(u_sF^2\left(\nabla \log u_s\right)\right)&\leq & u_t\log u_t-P_{s, t}^{\nabla u}\left(u_s\log u_s\right)\nonumber \\
&\leq & \frac {1-e^{2K(t-s)}}{2K}u_tF^2\left(\nabla \log u_t\right), \label{log-inf}\eeq where the LHS of (\ref{log-inf}) should be read as its limit $-(t-s)P_{s, t}^{\nabla u}\left(u_sF^2\left(\nabla \log u_s\right)\right)$ and the RHS of (\ref{log-inf}) should be read as its limit $-(t-s)u_tF^2\left(\nabla \log u_t\right)$ when $K=0$.

(3) It holds the gradient estimate for $\log u_t$ on $M$
\beq u_tF^2(\nabla \log u_t)\leq e^{-2K(t-s)}P_{s,t}^{\nabla u}\left(u_sF^2(\nabla \log u_s)\right). \label{inf-ineq1} \eeq  In particular, $u_tF^2(\nabla \log u_t)\leq e^{-2Kt}P_{0,t}^{\nabla u}\left(fF^2(\nabla \log f)\right)$.

(4) It holds the gradient estimate for $u_t$ on $M$
\beq F^2(\nabla u_t)\leq e^{-2K(t-s)}P_{s,t}^{\nabla u}\left(F^2(\nabla u_s)\right).\label{inf-ineq2} \eeq  In particular, $F^2(\nabla u_t)\leq e^{-2Kt}P_{0, t}^{\nabla u}\left(F^2(\nabla f)\right)$.
\end{thm}
\begin{proof}  First of all,  the assumption that $0<u_0=f\in C^2(M)$ ensures $F^2(\nabla f)\in C^1(M)\subset H^1(M)$, which means that $fF^2(\nabla \log f)\in C^1(M)\subset H^1(M)$. Since $u_t=P_{0, t}^{\nabla u}(f)$ is a positive global solution to $\partial_t u_t=\Delta u_t$, we have $u_t\in H^2(M)\cap C^{1, \beta}(M)$ and $F(\nabla u_t)\in H^1(M)\cap C^{\beta}(M)$ by Proposition \ref{prop21*}. Thus $u_t$ and $F(\nabla u_t)$ are bounded on $M$.  Consequently, $u_t\log u_t\in H^1(M)\cap C^{1, \beta}(M)$ and  $u_tF^2(\nabla\log u_t)\in H^1(M)\cap C^\beta(M)$. By Proposition \ref{prop22}, $P_{s, t}^{\nabla u}\left(F^2(\nabla u_s)\right)$, $P_{s,t}^{\nabla u}\left(u_s\log u_s\right)$ and $P_{s,t}^{\nabla u}\left(u_sF^2(\nabla \log u_s)\right)$ are in $H^1(M)\cap C^\beta(M)$ and hence they are H\"older continuous.

 Now we prove that (1) $\Rightarrow$ (2). Assume that Ric$_\infty\geq K$. For any $\phi\in C^\infty(M)$, let $H(\sigma)$ be defined by (\ref{H-sigma}) for $\sigma\in [0, t]$. By Lemma \ref{lem42*},  we have
  $H''(\sigma)\geq - 2KH'(\sigma)$, which means that $H'(\sigma)+2KH(\sigma)\leq H'(t)+2KH(t)$ for any $\sigma \leq t$.
Thus
\beqn \frac d{d\sigma} \left(e^{2K\sigma}H(\sigma)\right)=e^{2K\sigma}\left(H'(\sigma)+2KH(\sigma)\right)\leq e^{2K\sigma}\left(H'(t)+2KH(t)\right).\label{eq0} \eeqn

{\it Case 1.} $K\neq 0$. For any $0\leq s<\sigma \leq t$,  integrating the above inequality in $\sigma$ from $s$ to $t$ yields
\beq H(t)\leq \frac 1{2K}\left(e^{2K(t-s)}-1\right)H'(t)+H(s). \label{H-eHH} \eeq
By Lemma \ref{lem42*}, we have $H(s)=\int_M \phi P_{s, t}^{\nabla u}(u_s\log u_s)dm$ and \beqn H(t)=\int_M \phi u_t\log u_tdm, \ \ \ H'(t)= - \int_M\phi u_t F^2(\nabla \log u_t)dm.\eeqn
 Plugging these into (\ref{H-eHH}) and using arbitrariness of $\phi$ yield
\beq  u_t\log u_t-P_{s, t}^{\nabla u}(u_s\log u_s)\leq \frac 1{2K}\left(1-e^{2K(t-s)}\right)u_tF^2(\nabla \log u_t) \label{log*}\eeq  a.e. on $M$.
Since each term in (\ref{log*}) is continuous, (\ref{log*}) holds on $M$.

{\it Case 2.} $K=0$. In this case, we have $H''(\sigma)\geq 0$. Then $H'(\sigma)\leq H'(t)$ for any $\sigma\leq t$. Integrating this in $\sigma$ from $s$ to $t$ gives $H(t)\leq H(s)+H'(t)(t-s)$. Similar to Case 1,  we have
\beq u_t\log u_t-P_{s, t}^{\nabla u}(u_s\log u_s)\leq -(t-s) u_tF^2(\nabla \log u_t) \label{log-inf-0}\eeq on $M$.
Obviously, the RHS of (\ref{log-inf-0}) is the limit of that of (\ref{log*}) as $K\rightarrow 0$. Thus the second inequality in (\ref{log-inf}) holds.

Similarly,  we have $H'(\sigma)+2KH(\sigma)\geq H'(s)+2KH(s)$ for any $0\leq s<\sigma\leq t$ by the monotonicity. Hence
\beq \frac d{d\sigma} \left(e^{2K\sigma}H(\sigma)\right)=e^{2K\sigma}\left(H'(\sigma)+2KH(\sigma)\right)\geq e^{2K\sigma}\left(H'(s)+2KH(s)\right).\label{eq3}\eeq
Integrating this in $\sigma$ from $s$ to $t$ yields
\beqn H(t)\geq \frac 1{2K}\left(1-e^{-2K(t-s)}\right)H'(s)+H(s) \label{ineq4} \eeqn when $K\neq 0$.  From this and Lemma \ref{lem42*}, one obtains
\beq  u_t\log u_t-P_{s, t}^{\nabla u}(u_s\log u_s)\geq \frac 1{2K}\left(e^{-2K(t-s)}-1\right)P_{s, t}^{\nabla u}\left(u_sF^2(\nabla \log u_s)\right) \label{rev-log*}\eeq  a.e. on $M$.
Since each term in (\ref{rev-log*}) is continuous, (\ref{rev-log*}) holds on $M$.

 For the case when $K=0$, since $H''(\sigma)\geq 0$, we have $H'(\sigma)\geq H'(s)$ for $\sigma>s$. Integrating this in $\sigma$ from $s$ to $t$ yields $H(t)\geq H(s)+H'(s)(t-s)$, which implies that
 \beq u_t\log u_t-P_{s, t}^{\nabla u}(u_s\log u_s)\geq -(t-s) P_{s, t}^{\nabla u}\left(u_sF^2(\nabla \log u_s)\right)  \label{rev-log-inf-0}\eeq on $M$.
Obviously, the RHS of (\ref{rev-log-inf-0}) is the limit of that of (\ref{rev-log*}) as $K\rightarrow 0$. Thus the first inequality in (\ref{log-inf}) holds.

\bigskip

(2) $\Rightarrow$ (3). Assume that (\ref{log-inf}) holds. If $K=0$, the claim directly follows from (\ref{log-inf}).  When $K\neq 0$, it follows from (\ref{log-inf}) that
\beqn \frac{e^{-2K(t-s)}-1}{2K}P_{s, t}^{\nabla u}\left(u_sF^2\left(\nabla \log u_s\right)\right)\leq \frac{1-e^{2K(t-s})}{2K}u_tF^2\left(\nabla \log u_t\right).\eeqn  Note that $\frac{1-e^{2K(t-s)}}{2K}<0$. The above inequality is equivalent to
\beqn u_tF^2\left(\nabla \log u_t\right)&\leq &\frac{1-e^{-2K(t-s)}}{e^{2K(t-s)}-1}P_{s, t}^{\nabla u}\left(u_sF^2\left(\nabla \log u_s\right)\right)\nonumber \\
&=&e^{-2K(t-s)}P_{s, t}^{\nabla u}\left(u_sF^2\left(\nabla \log u_s\right)\right). \eeqn Thus (\ref{inf-ineq1}) follows.

\bigskip

 (3) $\Rightarrow$ (4). Assume that  (\ref{inf-ineq1}) holds. Let $u_t: =P_{0, t}^{\nabla u}(f)$ and $\tilde u_t:=P_{0, t}^{\nabla u}(1+\varepsilon f)$ for some small $\varepsilon>0$.   Since $P_{0, t}^{\nabla u}(1)=1$,  $\tilde u_t=P_{0, t}^{\nabla u}(1+\varepsilon f)=1+\varepsilon u_t>0$. In local coordinates, $g^{*ij}(x, d\tilde u_t)=g^{*ij}(x, du_t)=g^{ij}(x, \nabla u_t)$ on $M_{\tilde u_t}=M_{u_t}$. Hence,
 \beq \nabla \tilde u_t=g^{ij}(\nabla \tilde u_t)\frac{\partial \tilde u_t}{\partial x^j}\frac{\partial}{\partial x^i}=g^{*ij}(d\tilde u_t)\frac{\partial \tilde u_t}{\partial x^j}\frac{\partial}{\partial x^i}=\varepsilon g^{*ij}(du_t)\frac{\partial u_t}{\partial x^j}\frac{\partial}{\partial x^i}=\varepsilon \nabla u_t \label{uu}\eeq on $M_{\tilde u_t}=M_{u_t}$. Obviously,  $\nabla \tilde u_t=\nabla u_t=0$ on $M\setminus M_{u_t}$. Thus $\nabla \tilde u_t=\varepsilon \nabla u_t$ on $M$. From this, we have for any $\phi\in C^\infty(M)$,
 $$\int_M \phi \partial_t\tilde u_tdm=\varepsilon\int_M \phi \partial_tu_t dm=-\varepsilon \int_M d\phi(\nabla u_t)dm=-\int_Md\phi(\nabla \tilde u_t)dm, $$ which means that $\tilde u_t$ is a global solution to the heat equation with $0<\tilde u_0=1+\varepsilon f\in C^\infty(M)$. By Taylor's expansions, we have
 $\frac 1{\tilde u_t}=\frac 1{1+\varepsilon u_t}=1-\varepsilon u_t+\varepsilon^2 u_t^2+o(\varepsilon^2)$ as $\varepsilon\rightarrow 0$. Note that $M_{\log \tilde u_t}=M_{\tilde u_t}=M_{u_t}$. Consequently,
    \beq \tilde u_t F^2(\nabla \log \tilde u_t)=\tilde u_t F^{*2}(d(\log\tilde u_t))=\tilde u_t^{-1}F^{*2}(d\tilde u_t)=\varepsilon^2F^{2}(\nabla u_t)+o(\varepsilon^2) \label{u-logu-2}\eeq as $\varepsilon\rightarrow 0$.
 From this, applying (\ref{inf-ineq1}) to $\tilde u_t=1+\varepsilon u_t$ and then letting $\varepsilon\rightarrow 0$ yield (\ref{inf-ineq2}).

\bigskip

 (4) $\Rightarrow$ (1). Assume that (4) holds. Note that (\ref{inf-ineq2}) is an equality at $t=s$. By Lemma \ref{lem41},  we have
\beqn 0&\leq &\lim\limits_{t\rightarrow s}\frac{e^{-2K(t-s)}P_{s, t}^{\nabla u}(F^2(\nabla u_s))-F^2(\nabla u_t)}{2(t-s)}\\
&=&\frac 12\left\{\Delta^{\nabla u_s}(F^2(\nabla u_s))-2KF^2(\nabla u_s)-2d(\Delta u_s)(\nabla u_s)\right\}\eeqn  on $M_{u_s}$. Taking $s=0$ yields
$$\Delta^{\nabla f}\left(\frac{F^2(\nabla f)}2\right)-d(\Delta f)(\nabla f)\geq KF^2(\nabla f)$$ on $M_f$, which implies Ric$_\infty\geq K$ by Proposition \ref{prop61}. This finishes the proof. \end{proof}

It follows from Proposition \ref{prop61} and Theorem  \ref{thm61} that
\begin{cor} Let $(M, F, m)$ be an $n$-dimensional compact Finsler measure space with Ric$_\infty\geq K (K\in\mathbb R$). Then (1)--(3) in Proposition \ref{prop61} and (1)--(4) in Theorem \ref{thm61} are equivalent to each other.\end{cor}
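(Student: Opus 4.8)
The plan is to read the statement as a specialization of Theorem \ref{thm61} and of the corollary immediately preceding it, the only work being to verify that every hypothesis used in those proofs remains available once $(M,F,m)$ is compact. First I would record the standard facts that on a compact Finsler measure space $F$ automatically satisfies the uniform convexity and uniform smoothness condition (\ref{unif cs cons}) (cf. \cite{Oh2} and Remark \ref{rem51}(3)), the total measure $m(M)$ is finite, and $M$ is both forward and backward complete. Consequently the standing hypotheses of Theorem \ref{thm61} (forward completeness together with (\ref{unif cs cons})) and of Proposition \ref{prop61} are met, so the chains of equivalences (1)$\Leftrightarrow$(4), (1)$\Leftrightarrow$(5), (1)$\Leftrightarrow$(6)$\Leftrightarrow$(7) from Theorem \ref{thm61} and (1)$\Leftrightarrow$(2)$\Leftrightarrow$(3) from Proposition \ref{prop61} hold verbatim; combining them shows that (1)--(7) are pairwise equivalent.

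The point that needs attention is that several implications in Theorem \ref{thm61} --- in particular the converse directions, where one perturbs by $\tilde u_t := P^{\nabla u}_{0,t}(1+\varepsilon f)$ --- rely on the conservativeness $P^{\nabla u}_{0,t}(1)=1$ of the linearized semigroup, which in the noncompact setting is supplied by Proposition \ref{prop33}. When $M$ is compact, Proposition \ref{prop33} does not apply, but conservativeness is instead furnished by Corollary \ref{cor32} (equivalently by Proposition \ref{prop32}, since $1\in H^1_0(M)=H^1(M)$ when $m(M)<\infty$). Everything else in the proof of Theorem \ref{thm61} --- the inequality $H''(s)\ge 2KH'(s)$ coming from Lemma \ref{lem42*} and the integrated Bochner--Weitzenb\"ock formula, the elementary ODE comparison, and the $C^{1,\alpha}$-regularity of $u_t$ used to upgrade almost-everywhere inequalities to pointwise ones --- is unaffected. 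This is precisely what the remark after Theorem \ref{thm61} and Remark \ref{rem51}(3) record: on a compact space no assumption beyond $\mathrm{Ric}_\infty\ge K$ is needed.

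For the ``in particular'' assertion, since a compact Finsler manifold is complete and satisfies (\ref{unif cs cons}), the corollary immediately preceding this one applies with the same hypothesis $\mathrm{Ric}_\infty\ge K$ and yields (\ref{bound}) and (\ref{Lip}) for every $t\ge 0$ and every global solution $(u_t)_{t\ge 0}$ with $u_0\in C^0(M)$. If a self-contained derivation is preferred, one starts from statement (7), now valid on $M$, i.e. $F^2(\nabla u_t)\le e^{-2Kt}P^{\nabla u}_{0,t}\!\big(F^2(\nabla u_0)\big)$, applies the $L^\infty$-contractivity of $P^{\nabla u}_{0,t}$ from Proposition \ref{prop31}(3) to obtain $\|F(\nabla u_t)\|_{L^\infty}\le e^{-Kt}\|F(\nabla u_0)\|_{L^\infty}$, and then integrates $du_t$ along a minimizing geodesic between two points, using $F^*(du_t)=F(\nabla u_t)$, to get (\ref{Lip}); the case $t=0$ is the trivial equality.

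I do not expect a genuine obstacle: the statement repackages results already established. The only care required is bookkeeping of hypotheses --- ensuring that conservativeness is invoked through Corollary \ref{cor32} rather than Proposition \ref{prop33}, that the regularity available for $u_t$ when $t>0$ suffices to make the estimates of Theorem \ref{thm61} hold pointwise on all of $M$, and that the $t=0$ endpoint of (\ref{bound})--(\ref{Lip}) is indeed included as an equality.
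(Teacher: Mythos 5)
Your proposal is correct and takes essentially the same approach as the paper, which derives the corollary in one line "from Corollary \ref{cor32} and Remark \ref{rem51}": compactness gives (\ref{unif cs cons}) automatically, and conservativeness of $\{P_{s,t}^{\nabla u}\}$ comes from Corollary \ref{cor32} rather than Proposition \ref{prop33}, after which Theorem \ref{thm61} and the preceding corollary apply verbatim. Your identification of exactly where conservativeness enters (the perturbation $\tilde u_t=P^{\nabla u}_{0,t}(1+\varepsilon f)$ in the converse directions) and your direct derivation of (\ref{bound})--(\ref{Lip}) from statement (7) via $L^\infty$-contractivity are precisely the bookkeeping the paper's one-line citation compresses.
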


\begin{remark} {\rm In Theorem \ref{thm61}, the inequalities in (2)--(3) and hence (4) need the positivity of $u_t$. In fact,  (\ref{inf-ineq2}) holds for any global solution $u_t$ to $\partial_t u=\Delta u$ in the compact case. This was first proved in Theorem 4.1 of \cite{OS2} in a different way. } \end{remark}

 Recall that the Lipschitz constant of a continuous function $f$ is defined by
$${\rm{Lip}}(f)=\sup\limits_{x, z\in M}\frac{f(z)-f(x)}{d_F(x, z)}, $$ where $d_F$ is the distance function induced by $F$. Note that
$$\limsup\limits_{z\rightarrow x}\frac{f(z)-f(x)}{d_F(x, z)}=F^*(df(x))=F(\nabla f(x))$$ for any $f\in C^1(M)$ (\cite{OS2}).
By Corollary \ref{cor31}, we have $P_{0, t}^{\nabla u}(F^2(\nabla u_0))\leq \|F^2(\nabla u_0)\|_{L^\infty}\leq \|F(\nabla u_0)\|^2_{L^\infty}$. From this and (\ref{inf-ineq2}), one obtains an (exponential) decay (when $K\geq 0$) or growth (when $K\leq 0$) estimate of the Lipschitz constants along the heat flow, which was due to S.Ohta.

 \begin{cor} [\cite{Oh1}] Let $(M, F, m)$ be the same as in Theorem \ref{thm61} and Ric$_\infty\geq K (K\in \mathbb R)$. Then, for any global solution $(u_t)_{t\geq 0}$ to the heat equation with $u_0\in C^2(M)$, we have
\beq \|F(\nabla u_t)\|_{L^\infty}\leq e^{-Kt}\|F(\nabla u_0)\|_{L^\infty}. \label{bound}\eeq
and hence ${\rm{Lip}}(u_t)\leq e^{-Kt}{\rm{Lip}}(u_0) \label{Lip}$ for all $t\in [0, T]$.
\end{cor}

 For the complete and noncompact case, we have the following result.
 \begin{thm} \label{thm62} Let $(M, F, m)$ be a complete and noncompact Finsler measure space satisfying (\ref{unif cs cons}) and $u_t$ be a positive global solution to $\partial_tu=\Delta u$ (in the weak sense) with $u_0=f\in C_0^2(M)$ and $F\big(\nabla[F(\nabla u_t)]\big)\in L^2(M)$. Assume that Ric$_\infty\geq K (K\in\mathbb R$). Then (1)--(3) in Proposition \ref{prop61} and (1)--(4) in Theorem \ref{thm61} are equivalent to each other. In particular, (\ref{bound}) holds on $M$ for $t>0$. \end{thm}

\begin{proof}  We first remark that (\ref{unif cs cons})  guarantees the existence of semigroup $\{P_{s,t}^{\nabla u}\}$. In this case,  $H_0^1(M)=H^1(M)$ as in the proof of Lemma \ref{lem43}. Moreover, $f\in C_0^2(M)$ ensures that $f$ and $F(\nabla f)$ are in $H^1(M)\cap L^\infty(M)$. By Propositions \ref{prop21*}--\ref{prop22} and Theorem \ref{thm31},   $u_t=P_{0, t}^{\nabla u}(f)\in H^1(M)\cap C^{1, \beta}(M)\cap L^\infty(M)$ with $u_t\in H_{loc}^2(M)$ and $P_{0, t}^{\nabla u}(F(\nabla f))\in H^1(M)\cap C^\beta(M)\cap L^\infty(M)$. Consequently, both $u_t\log u_t$ and $P_{s, t}^{\nabla u}(u_s\log u_s)$ are in $H^1(M)\cap C^\beta(M)$.

Since $F\big(\nabla[F(\nabla u_t)]\big)\in L^2(M)$ by the assumption,  we have $F(\nabla u_t)\in H^1(M)$.
By (\ref{unif cons}), the assumption that $F\big(\nabla[F(\nabla u_t)]\big)\in L^2(M)$ is equivalent to $d[F(\nabla u_t)]\big(\nabla^{\nabla u_t}[F(\nabla u_t)]\big)\in L^1(M)$. In fact, it follows from $$\kappa^{-1}F^2(\nabla h)\leq dh(\nabla^{\nabla u}h)=g_{\nabla u}\big(\nabla^{\nabla u}h, \nabla^{\nabla u}h\big)=g^*(du)(dh, dh)\leq (\kappa^*)^{-1}F^2(\nabla h),$$ where $h:=F^2(\nabla u_t)$.
 Hence we get
\beqn F(\nabla u_t)\leq e^{-K(t-s)}P_{0, t}^{\nabla u}(F(\nabla f)) \label{Fu-Ff}\eeqn by Theorem 3.7 in \cite{Oh2}, which implies that $F(\nabla u_t)\in L^\infty(M)$. Thus  $F(\nabla u_t)\in H^1(M)\cap L^\infty(M)$ and hence $F^2(\nabla u_t)\in H^1(M)$.  Consequently, $u_tF^2(\nabla\log u_t)=u_t^{-1}F^2(\nabla u_t)\in H^1(M)$.  The rest of the proof follows from that of Theorem \ref{thm61}. This finishes the proof. \end{proof}

 \section*{ACKNOWLEDGMENT}
The author was partially supported by National Natural Science Foundation of China under Grant Nos. 12471044, 12071423 and Zhejiang Provincial Natural Science Foundation of China under Grant No. LZ26A010004. The author expresses her sincere thanks to the anonymous referees for their helpful suggestions and comments.

 \end{document}